\numberwithin{equation}{section}
\def\Xint#1{\mathchoice
  {\XXint\displaystyle\textstyle{#1}}%
  {\XXint\textstyle\scriptstyle{#1}}%
  {\XXint\scriptstyle\scriptscriptstyle{#1}}%
  {\XXint\scriptscriptstyle\scriptscriptstyle{#1}}%
  \!\int}
\def\XXint#1#2#3{{\setbox0=\hbox{$#1{#2#3}{\int}$}
  \vcenter{\hbox{$#2#3$}}\kern-.5\wd0}}
\def\-int{\Xint -}
\newcommand{\R}{\mathbb{R}}
\newcommand{\dist}{\mathit{dist}}
\newcommand{\M}{\mathcal{M}}
\newcommand{\T}{\mathcal{T}}
\newcommand{\h}{H^{s}(\R^{N})}
\newcommand{\X}{X^{1,s}(\R^{N+1}_{+})}
\newcommand{\Xr}{X^{1,s}_{{\rm rad}}(\R^{N+1}_{+})}
\newcommand{\x}{X^{s}(\R^{N+1}_{+})}
\newcommand{\E}{\mathcal{E}}
\newcommand{\2}{2^{*}_{s}}
\newcommand{\wh}{\widehat}
\newcommand{\ri}{\rightarrow}
\DeclareMathOperator{\dive}{div}
\DeclareMathOperator{\e}{\varepsilon}
\newtheorem{prop}{Proposition}[section]
\newtheorem{lem}{Lemma}[section]
\newtheorem{thm}{Theorem}[section]
\newtheorem{cor}{Corollary}[section]
\newtheorem{remark}{Remark}[section]
\title[fractional Kirchhoff equations in $\R^{N}$]{Concentration phenomena for a class of fractional Kirchhoff equations in $\R^{N}$ with  general nonlinearities}
\author[V. Ambrosio]{Vincenzo Ambrosio}
\address{Vincenzo Ambrosio\hfill\break\indent
Dipartimento di Ingegneria Industriale e Scienze Matematiche \hfill\break\indent
Universit\`a Politecnica delle Marche\hfill\break\indent
Via Brecce Bianche, 12\hfill\break\indent
60131 Ancona (Italy)}
\email{v.ambrosio@univpm.it}
\keywords{Fractional Kirchhoff problems; extension method; Pohozaev-identity; variational methods; critical exponent}
\subjclass[2010]{47G20, 35R11, 35J20, 35J60, 35B33}
\date{}
\begin{document}

\begin{abstract}
In this paper we study the following class of fractional Kirchhoff problems:
\begin{equation*}
\left\{
\begin{array}{ll}
\e^{2s}M(\e^{2s-N}[u]^{2}_{s})(-\Delta)^{s}u + V(x) u= f(u) &\mbox{ in } \R^{N}, \\
u\in H^{s}(\R^{N}), \quad u>0 &\mbox{ in } \R^{N},
\end{array}
\right.
\end{equation*}
where $\e>0$ is a small parameter, $s\in (0, 1)$, $N\geq 2$, $(-\Delta)^{s}$ is the fractional Laplacian, $V:\R^{N}\ri \R$ is a positive continuous function, $M: [0, \infty)\ri \R$ is a Kirchhoff function satisfying suitable conditions and $f:\R\ri \R$ fulfills Berestycki-Lions type assumptions of subcritical or critical type. Using suitable variational arguments, we prove the existence of a family of positive solutions $(u_{\e})$ which concentrates at a local minimum of $V$ as $\e\ri 0$. 
\end{abstract}
\maketitle

\section{Introduction}
\subsection{Main results}
In this paper we deal with the following class of fractional Kirchhoff problems:
\begin{equation}\label{P}
\left\{
\begin{array}{ll}
\e^{2s} M(\e^{2s-N}[u]^{2}_{s})(-\Delta)^{s}u +V(x) u= f(u) &\mbox{ in } \R^{N}, \\
u\in H^{s}(\R^{N}), \quad u>0 &\mbox{ in } \R^{N},
\end{array}
\right.
\end{equation}
where $\e>0$ is a small parameter, $s\in (0, 1)$, $N\geq 2$, 
$M$ is a Kirchhoff function, $V$ is a positive potential and $f$ is a continuous nonlinearity.
The nonlocal operator $(-\Delta)^{s}$ appearing in \eqref{P} is the so called fractional Laplacian operator defined for smooth functions $u: \R^{N}\ri \R$ by
$$
(-\Delta)^{s}u(x)=C(N,s) P.V. \int_{\R^{N}} \frac{u(x)-u(y)}{|x-y|^{N+2s}}\, dy,
$$
where $C(N,s)$ is a positive normalizing constant, 
and $H^{s}(\R^{N})$ denotes the fractional Sobolev space of functions $u\in L^{2}(\R^{N})$ such that
$$
[u]^{2}_{s}:=\iint_{\R^{2N}} \frac{|u(x)-u(y)|^{2}}{|x-y|^{N+2s}}\, dx dy<\infty
$$
endowed with the norm
$$
\|u\|_{\h}:=\sqrt{[u]_{s}^{2}+|u|_{2}^{2}}.
$$
We recall that Fiscella and Valdinoci \cite{FV} 
proposed for the first time a stationary fractional Kirchhoff model in a bounded domain $\Omega\subset \R^{N}$ with homogeneous Dirichlet boundary conditions and involving a critical nonlinearity:
\begin{align}\label{FKE}
\left\{
\begin{array}{ll}
M\left([u]^{2}_{s}\right)(-\Delta)^{s}u=\lambda f(x, u)+|u|^{\2-2}u \quad &\mbox{ in } \Omega,\\
u=0 &\mbox{ in } \R^{N}\setminus \Omega, 
\end{array}
\right. 
\end{align}
where $M$ is a continuous Kirchhoff function whose prototype is given by $M(t)=a+bt$ with $a>0$ and $b\geq 0$, $\lambda>0$ is a parameter and $f$ is a continuous function with subcritical growth.  \\
Their model generalizes in the fractional context the well-known Kirchhoff model introduced by Kirchhoff \cite{K} as an extension of the classical d'Alembert wave equation. For some interesting existence and multiplicity results for Kirchhoff problems in the classic setting, we refer to \cite{ACF, Fig, FIJ, HL, PZ, ZCDO} and the references therein.\\
In the fractional framework, after the pioneering work \cite{FV}, many authors focused on fractional Kirchhoff problems set in bounded domains or in the whole space and involving nonlinearities with subcritical or critical growth; see for instance \cite{AIccm, FP,  MRZ, MBRS, PXZ} and the references therein for unperturbed problems (that is when $\e=1$ in \eqref{P}), and \cite{Aasy, AI2} for some existence and multiplicity results for perturbed problems (that is when $\e>0$ is sufficiently small).

On the other hand, when $M(t)\equiv 1$, equation \eqref{P} boils down to a nonlinear fractional Schr\"odinger equation of the type
\begin{equation}\label{FSE}
\e^{2s}(-\Delta)^{s}u+V(x)u=h(x, u) \mbox{ in } \R^{N},
\end{equation}
proposed by Laskin \cite{Laskin1}  as a result of expanding the Feynman path integral, from the Brownian like to the L\'evy like quantum mechanical paths.
Equation \eqref{FSE} has been object of investigation in these last two decades and several existence and multiplicity results have been obtained under different conditions on $V$ and $h$; see \cite{Adie, Aade, CW, DMV, FQT} and the references therein. In a particular way, a great attention has been devoted to the existence and concentration phenomenon as $\e\ri 0$ of positive solutions to \eqref{FSE}; see \cite{AM, Aampa, DDPW, FigS, He, HZ, Seok, JLZ}. \\
Motivated by the above works, the goal of this paper is to study the existence and  concentration of positive solutions to \eqref{P} under  very general assumptions on the Kirchhoff function $M$ and the nonlinearity $f$. 
We always suppose that $V:\R^{N}\ri \R$ is a continuous function which satisfies the following conditions due to del Pino and Felmer \cite{DF}:
\begin{compactenum}[$(V1)$]
\item $V_{1}:=\inf_{x\in \R^{N}}V(x)>0$,
\item there exists an open bounded set $\Lambda\subset \R^{N}$ such that
$$
V_{0}:=\inf_{x\in \Lambda} V(x)<\min_{x\in \partial \Lambda} V(x).
$$
We also set $\M:=\{x\in \Lambda: V(x)=V_{0}\}$. Without loss of generality, we may assume that $0\in \M$.
\end{compactenum}
Concerning the Kirchhoff function $M$, we suppose that $M:[0, \infty)\ri \R_{+}$ is continuous and such that:
\begin{compactenum}[$(M1)$]
\item there exists $m_{0}>0$ such that $M(t)\geq m_{0}$ for all $t\geq 0$,
\item $\liminf_{t\ri \infty} \left[\wh{M}(t)-(1-\frac{2s}{N})M(t)t\right]=\infty$, where $\wh{M}(t):=\int_{0}^{t} M(\tau)\, d\tau$,
\item $M(t)/t^{\frac{2s}{N-2s}}\ri 0$ as $t\ri \infty$,
\item $M$ is nondecreasing in $[0, \infty)$,
\item $t\mapsto M(t)/t^{\frac{2s}{N-2s}}$ is nonincreasing in $(0, \infty)$.
\end{compactenum}
We note that, if $s=1$, the above assumptions have been used in \cite{FIJ}. Clearly, 
$M(t)=m_{0}+bt$, with $ b\geq 0$, satisfies $(M1)$-$(M5)$  when $b=0$, $N\geq 2$ and $s\in (0, 1)$, and $N=3$, $s\in (\frac{3}{4}, 1)$ whenever $b>0$.\\
In the first part of the paper, we require that $f:\R\ri \R$ is a continuous function such that $f(t)=0$ for $t\leq 0$ and fulfills the following Beresticky-Lions type assumptions \cite{BL}:
\begin{compactenum}[$(f_1)$]
\item $\lim_{t\ri 0} \frac{f(t)}{t}=0$,
\item $\limsup_{t\ri \infty} \frac{f(t)}{t^{p}}<\infty$ for some $p\in (1, \2-1)$, where $\2:=\frac{2N}{N-2s}$ is the fractional critical exponent,
\item there exists $T>0$ such that $F(T)>\frac{V_{0}}{2}T^{2}$, where $F(t):=\int_{0}^{t} F(\tau)\, d\tau$.
\end{compactenum}
The first main result  of this work can be stated as follows:
\begin{thm}\label{thm1}
Assume that $(V_1)$-$(V_2)$, $(M_1)$-$(M_5)$ and $(f_1)$-$(f_3)$ are satisfied. When $s\in (0, \frac{1}{2}]$, we also assume that $f\in C^{0, \alpha}_{loc}(\R)$ for some $\alpha\in (1-2s, 1)$.
Then, for small $\e>0$, there exists a positive solution $u_{\e}$ to \eqref{P}. Moreover, there exists a maximum point $x_{\e}\in \R^{N}$ of $u_{\e}$ such that $\lim_{\e\ri 0} dist(x_{\e}, \M)=0$, and for any such $x_{\e}$, $v_{\e}(x)=u_{\e}(\e x+x_{\e})$ converges, up to a subsequence, in $H^{s}(\R^{N})$ to a least energy solution of the limiting problem 
$$
M([u]^{2}_{s})(-\Delta)^{s}u+V_{0} u=f(u) \mbox{ in } \R^{N}.
$$
In particular, there exists a constant $C>0$, independent of $\e>0$, such that
$$
u_{\e}(x)\leq \frac{C\e^{N+2s}}{\e^{N+2s}+|x-x_{\e}|^{N+2s}} \quad \forall x\in \R^{N}.
$$
\end{thm}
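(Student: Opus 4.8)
The plan is to follow the now-standard penalization scheme of del Pino--Felmer, adapted to the fractional Kirchhoff setting via the Caffarelli--Silvestre extension, since the function spaces $\X$ and $\x$ introduced in the paper suggest the author works with the extended problem on $\R^{N+1}_{+}$. First I would rescale: setting $v(x) = u(\e x)$ transforms \eqref{P} into an equation on $\R^{N}$ with potential $V(\e x)$ and Kirchhoff term $M([v]_s^2)$, so that the $\e$-dependence is pushed entirely into the potential. I would then define a penalized nonlinearity $g_\e(x,t)$ that coincides with $f(t)$ inside $\Lambda/\e$ and is truncated (made subcritical and satisfying an Ambrosetti--Rabinowitz-type control, e.g. $g_\e(x,t) \le \frac{V_1}{k} t$ for some large $k$) outside, so that the modified functional
\[
\J_\e(v) = \frac12 \wh{M}([v]_s^2) + \frac12 \int_{\R^{N}} V(\e x) v^2 \, dx - \int_{\R^{N}} G_\e(x,v)\, dx
\]
has a mountain-pass geometry and a bounded Palais--Smale sequence. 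Here assumptions $(M_1)$--$(M_3)$ guarantee coercivity and the correct compactness thresholds, while $(f_1)$--$(f_3)$ are exactly the Berestycki--Lions hypotheses ensuring the limiting problem $M([u]_s^2)(-\Delta)^s u + V_0 u = f(u)$ has a least-energy (ground-state) solution with an associated minimax level $c_{V_0}$; this is where a Pohozaev-type identity (flagged in the keywords) enters, to make sense of the ground-state energy and to compare minimax levels.

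The core of the argument is then a concentration--compactness / comparison step. I would produce a critical point $v_\e$ of $\J_\e$ at a level that, as $\e \to 0$, converges to $c_{V_0}$; the strict inequality $V_0 = \inf_\Lambda V < \min_{\partial\Lambda} V$ from $(V_2)$ is what forces the profile of $v_\e$ to concentrate inside $\Lambda$ rather than escape or spread. Concretely, one rescales around a maximum point, passes to the limit using the fractional compact-embedding tools and the monotonicity conditions $(M_4)$--$(M_5)$ (which give that $t\mapsto \wh M(t)/\text{something}$ behaves monotonically and hence that the limit Kirchhoff coefficient is identified correctly, avoiding the usual non-compactness obstruction of Kirchhoff terms), and shows the translated limit $v_\e(\cdot + y_\e)$ solves the limiting equation and is a least-energy solution. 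A barrier/maximum-principle argument near $\partial(\Lambda/\e)$ then shows $v_\e$ is exponentially-type small outside a neighborhood of the concentration point, so that $g_\e(x,v_\e) = f(v_\e)$ there and $v_\e$ is in fact a genuine solution of the original (un-penalized) rescaled problem for small $\e$; undoing the rescaling gives $u_\e$ with a maximum point $x_\e$ satisfying $\dist(x_\e,\M)\to 0$.

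The final pointwise decay estimate $u_\e(x) \le C\e^{N+2s}/(\e^{N+2s}+|x-x_\e|^{N+2s})$ I would obtain by a comparison-function argument: after rescaling, $v_\e = u_\e(\e\cdot + x_\e)$ solves $m_0(-\Delta)^s v_\e \le M(\cdot)(-\Delta)^s v_\e \le (\text{small}) v_\e$ outside a large ball (using $(M_1)$ and the smallness of $g_\e$ and of $v_\e$ there), and one constructs an explicit supersolution of the form $w(x) = C/(1+|x|^{N+2s})$ — a standard fractional Green-function–type barrier — dominating $v_\e$; scaling back yields the stated bound with $C$ uniform in $\e$. The regularity hypothesis $f \in C^{0,\alpha}_{loc}$ for $s \le 1/2$ is needed to run Schauder-type estimates guaranteeing $v_\e$ is continuous and bounded so the comparison principle applies.

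The main obstacle, I expect, is the interplay between the nonlocal Kirchhoff coefficient $M([v_\e]_s^2)$ and the concentration analysis: the Gagliardo seminorm $[v_\e]_s^2$ is a global quantity, so passing to the limit requires showing $[v_\e]_s^2$ converges to $[v_0]_s^2$ (no loss of mass at infinity), and then identifying the limiting equation as genuinely having coefficient $M([v_0]_s^2)$; this is precisely where $(M_2)$--$(M_5)$ do the heavy lifting, $(M_2)$ controlling the Pohozaev/energy balance to pin the level, $(M_3)$ keeping the problem below the critical Kirchhoff threshold, and the monotonicity $(M_4)$--$(M_5)$ ensuring the minimax level depends monotonically on $V_0$ so the comparison $c_\e \to c_{V_0}$ closes. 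A secondary technical difficulty is that, because we only assume Berestycki--Lions conditions (no Ambrosetti--Rabinowitz on $f$ itself, no monotonicity of $f(t)/t$), the mountain-pass level must be handled through the Pohozaev manifold or a Jeanjean-type scaling trick rather than the Nehari manifold, which complicates both the existence of the limiting ground state and the PS-sequence boundedness.
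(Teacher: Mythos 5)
Your high-level outline captures several ingredients the paper actually uses (Caffarelli--Silvestre extension, rescaling, Pohozaev identity, del Pino--Felmer truncation $\hat f$, barrier argument for the polynomial decay), but there is one genuine gap that you flag but do not resolve, and it is precisely the step where the paper's argument differs from a "standard" penalization: you write that the modified functional $\J_\e$ ``has a mountain-pass geometry and a bounded Palais--Smale sequence.'' Under Berestycki--Lions hypotheses this is not available: the del Pino--Felmer truncation only controls $g_\e$ outside $\Lambda_\e$; inside $\Lambda_\e$, $g_\e=f$ satisfies no Ambrosetti--Rabinowitz nor $f(t)/t$-monotonicity condition, so the usual routes to PS-boundedness are closed. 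The Jeanjean scaling trick you invoke works for the \emph{autonomous} limiting functional $L_{V_0}$ (this is how the paper proves Proposition \ref{prop3.4HLP}), because there one can augment the minimax with a dilation parameter $\theta$ and exploit the Pohozaev functional; but the same trick does not transfer to the non-autonomous perturbed functional, since dilations do not commute with $V(\e x)$, and the Kirchhoff term $\wh M([v]_s^2)$ is not scaling-homogeneous either.

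The paper circumvents this in two ways you do not mention. First, it augments the del Pino--Felmer truncation with a \emph{second} penalty term in the Byeon--Wang style, $Q_\e(u)=\bigl(\int_{\R^N}\chi_\e u^2(\cdot,0)\,dx-1\bigr)_+^2$, so that $J_\e=P_\e+Q_\e$; $Q_\e$ penalizes mass escaping $\Lambda_\e$ without imposing AR-type growth. Second, and more essentially, it does not try to produce a globally bounded PS sequence at all. Instead, following Byeon--Jeanjean and Gloss, it restricts attention to a tubular neighborhood $E_\e^{d_0}$ of the set of truncated translates of ground states $W\in\mathcal S_{V_0}$, proves the key compactness statement Lemma \ref{Klem} (almost-critical points of $J_\e$ in $E_\e^{d_0}$ at level $\le c_{V_0}$ converge, after translation, to an element of $\mathcal S_{V_0}$ centered at a point of $\M$), deduces a uniform gradient lower bound on the annulus $E_\e^{d_0}\setminus E_\e^{d_1}$ (Corollary \ref{cor5.1G}), and then runs a localized deformation argument (Lemmas \ref{lem5.1G}--\ref{lem3.3He}) to produce a PS sequence already confined to $J_\e^{d_\e+\e}\cap E_\e^{d_0}$, whence boundedness is automatic. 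Lemma \ref{Klem} is also where the Kirchhoff coefficient identification $\alpha_0=M(\|\tilde w\|^2_{\x})$ is proved, via the Chang--Wang compactness lemma combined with the Pohozaev identity and $(M4)$--$(M5)$; your plan correctly identifies this as the main obstacle, but the mechanism (passage from a strong local limit to the global energy identity, exploiting that equality must hold in the $\liminf$/$\limsup$ chain) needs to be supplied, not merely invoked. Without these localization devices, the step ``produce a critical point $v_\e$ of $\J_\e$ at a level converging to $c_{V_0}$'' is unjustified.

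Two smaller remarks. You describe $v_\e$ as ``exponentially-type small'' near $\partial(\Lambda/\e)$ before switching to the polynomial barrier; for the fractional Laplacian only polynomial decay $\sim|x|^{-N-2s}$ holds, and this is in fact essential to the estimate $Q_{\e_n}(w_{\e_n})=0$ that removes the penalty. Also, the limiting ground state set $\mathcal S_{V_0}$ must first be shown nonempty \emph{and compact} (Lemma \ref{Final} and Proposition \ref{COMPs}, via the injective map $T:\widetilde{\mathcal S}_{V_0}\to\mathcal S_{V_0}$ from fractional Schr\"odinger ground states) before the neighborhood $E_\e^{d_0}$ even makes sense; your outline assumes existence of a least-energy solution of the limit problem but does not address its compactness, which the paper needs throughout.
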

\begin{remark}
The restrictions on the regularity on $f$ are only used to obtain the better regularity of solutions to \eqref{P} which guarantees the Pohozaev identity (see Proposition $1.1$ in \cite{BKS}). 
\end{remark}

In the second part of this paper, we consider \eqref{P} by requiring that $f$
satisfies the following Beresticky-Lions type assumptions of critical growth \cite{ZZ}, that is $f$ fulfills $(f_1)$ and
\begin{compactenum}[]
\item $(f'_2)$ $\lim_{t\ri \infty} \frac{f(t)}{t^{\2-1}}=1$,
\item $(f'_3)$ there exist $\lambda>0$ and $p<\2$ such that
$$
f(t)\geq t^{\2-1}+\lambda t^{p-1} \quad \forall t\geq 0,
$$
where $\lambda>0$ is such that
\begin{itemize}
\item $p\in (2, \2)$ and $\lambda>0$ if $N\geq 4s$,
\item $p\in (\frac{4s}{N-2s}, \2)$ and $\lambda>0$ if $2s<N<4s$,
\item $p\in (2, \frac{4s}{N-2s}]$ and $\lambda>0$ is sufficiently large if $2s<N<4s$.
\end{itemize}
\end{compactenum}
Then, the second main result of this paper is the following:
\begin{thm}\label{thm2}
Assume that $(V_1)$-$(V_2)$, $(M_1)$-$(M_5)$ and $(f_1)$, $(f'_2)$-$(f'_3)$ are satisfied. When $s\in (0, \frac{1}{2}]$, we also assume that $f\in C^{0, \alpha}_{loc}(\R)$ for some $\alpha\in (1-2s, 1)$.
Then, for small $\e>0$, there exists a positive solution $u_{\e}$ to \eqref{P}. Moreover, there exists a maximum point $x_{\e}\in \R^{N}$ of $u_{\e}$ such that $\lim_{\e\ri 0} dist(x_{\e}, \M)=0$, and for any such $x_{\e}$, $v_{\e}(x)=u_{\e}(\e x+x_{\e})$ converges, up to a subsequence, in $H^{s}(\R^{N})$ to a least energy solution of  
$$
M([u]^{2}_{s})(-\Delta)^{s}u+V_{0} u=f(u) \mbox{ in } \R^{N}.
$$
\end{thm}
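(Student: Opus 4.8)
\textbf{Proof strategy for Theorem \ref{thm2}.}

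The plan is to use the Caffarelli--Silvestre extension to recast \eqref{P} as a degenerate elliptic problem in $\R^{N+1}_{+}$, employ the penalization scheme of del Pino--Felmer to handle the lack of global compactness, and then transfer known information about the limiting autonomous problem. First I would rescale by setting $v(x)=u(\e x)$, so that \eqref{P} becomes $M([v]_{s}^{2})(-\Delta)^{s}v+V(\e x)v=f(v)$; the Kirchhoff term is now a genuine (non-$\e$-dependent) coefficient because of the precise scaling $\e^{2s-N}[u]_s^2=[v]_s^2$, and this is exactly why that scaling was built into the equation. Then I would introduce the penalized nonlinearity $g_\e(x,t)$ à la del Pino--Felmer: replace $f(t)$ outside $\Lambda/\e$ by $\min\{f(t),(V_1/k)t\}$ for a suitable large $k$, so that the modified functional $\J_\e$ on $\X$ (or $\x$) satisfies the Palais--Smale condition, and any critical point $v_\e$ of $\J_\e$ that additionally satisfies $v_\e\le a$ pointwise outside $\Lambda/\e$ (where $f(a)=(V_1/k)a$) is in fact a genuine solution of the rescaled \eqref{P}.

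Next I would establish the critical-growth compactness that replaces the usual Ambrosetti--Rabinowitz / monotonicity arguments. The key is to show that the minimax level of $\J_\e$ stays strictly below the threshold above which the Sobolev bubble $\2$-concentration destroys compactness; this is where assumptions $(f'_2)$--$(f'_3)$ and the lower bound on $\lambda$ (and the dimension-dependent ranges of $p$) enter decisively, exactly as in the Brezis--Nirenberg / Zhang--Zou analysis. Concretely, using the Talenti-type extremal for the fractional Sobolev inequality as a test family, the extra term $\lambda t^{p-1}$ in $(f'_3)$ lowers the mountain-pass level below $\frac{s}{N}\mathcal{S}^{N/2s}$-type constants adjusted by $M$; this forces any Palais--Smale sequence at that level to be relatively compact. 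I would also invoke $(M_1)$--$(M_5)$ here: $(M_1)$ gives coercivity/ellipticity, $(M_3)$ controls the growth of $\wh M$ against the critical exponent so that the Kirchhoff contribution does not spoil the energy estimate, and $(M_4)$--$(M_5)$ guarantee that the autonomous limiting functional has a well-defined least energy (ground state) level and a Pohozaev-type identity, via the monotonicity of $t\mapsto M(t)/t^{2s/(N-2s)}$.

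From there the concentration argument is largely parallel to the subcritical case (Theorem \ref{thm1}): I would compare $c_\e:=\inf$ of $\J_\e$ over the appropriate mountain-pass class with the least energy level $m_{V_0}$ of the autonomous problem $M([u]_s^2)(-\Delta)^su+V_0u=f(u)$, showing $\limsup_{\e\to0}c_\e\le m_{V_0}$ by plugging in a suitably cut-off and translated ground state, and conversely $\liminf c_\e\ge m_{V_0}$ using the penalization and Fatou. A concentration-compactness / profile decomposition argument on the rescaled solutions $v_\e$, together with the uniform $L^\infty$ and decay estimates (here the regularity hypothesis $f\in C^{0,\alpha}_{loc}$ for $s\le1/2$ is used to legitimize the Pohozaev identity of Proposition 1.1 in \cite{BKS} and the Moser iteration), then yields that $v_\e$ has a single concentration point converging to $\M$, that the penalization is inactive for small $\e$ (so $u_\e$ solves \eqref{P}), and that $v_\e(x)=u_\e(\e x+x_\e)\to$ a least energy solution of the limiting equation in $H^s(\R^N)$. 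The main obstacle is the first step of the previous paragraph: verifying the strict level estimate $c_\e<$ (critical threshold) uniformly for small $\e$ in the presence of both the nonlocal Kirchhoff coefficient and the penalization, since the standard bubble computations must be redone with $M([\cdot]_s^2)$ multiplying the gradient term and with the truncated nonlinearity — this is precisely where the lower bounds on $\lambda$ and the dimensional restrictions on $p$ in $(f'_3)$ are indispensable.
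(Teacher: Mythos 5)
Your outline is not wrong in spirit, but it is genuinely different from — and substantially more laborious than — the route the paper actually takes, and it misses the one key idea that makes the paper's proof of Theorem \ref{thm2} short. You propose to attack the critical nonlinearity head-on at the level of the perturbed, penalized functional $J_\e$: estimate the mountain-pass level below a Sobolev-bubble threshold using the Talenti-type extremals and the extra $\lambda t^{p-1}$ term of $(f'_3)$, then prove a threshold Palais--Smale condition and run the concentration argument. This is morally what happens in \cite{He} or \cite{JLZ} for $M\equiv 1$, and it could be carried out, but it requires redoing essentially all of Section 5 (level estimates, Lemma \ref{Klem}-type splitting, uniform decay) with the critical growth and the Kirchhoff coefficient present in every bubble computation and every cut-off estimate — which is precisely the work the paper is designed to avoid.

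What the paper does instead is a reduction to Theorem \ref{thm1} via a truncation of $f$ \emph{in the $t$-variable}, not just the spatial del Pino--Felmer truncation you describe. Using the compactness of $\mathcal{S}_{V_0}$ and $\widetilde{\mathcal{S}}_{V_0}$ from Proposition \ref{prop2.1ZCDO}, one fixes $\kappa$ with $\sup_{u\in\mathcal{S}_{V_0}}|u(\cdot,0)|_{\infty}<\kappa$ and sets $f_k(t):=\min\{f(t),k\}$ for $k>\max_{t\in[0,\kappa]}f(t)$. Since $f_k$ is bounded, it satisfies the subcritical condition $(f_2)$, and Lemma \ref{lem3.1ZCDO} verifies $(f_3)$; thus Theorem \ref{thm1} applies verbatim and produces a family $v_\e$ solving the truncated problem \eqref{TTP}, concentrating near $\M$, with $v_\e(\e\cdot+x_\e)\ri U\in\mathcal{S}^{k}_{V_0}$ in $H^{s}(\R^{N})$. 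Lemma \ref{lem3.2ZCDO} then identifies $\mathcal{S}^{k}_{V_0}=\mathcal{S}_{V_0}$, so the limit $U$ satisfies $|U(\cdot,0)|_\infty<\kappa$; the uniform $L^\infty$/decay bounds from the proof of Theorem \ref{thm1} upgrade this to $|v_\e|_\infty<\kappa$ for small $\e$, whence $f_k(v_\e)=f(v_\e)$ and $v_\e$ solves the original \eqref{P}. Your proposal never identifies this $\kappa$-truncation and the lemma $\mathcal{S}^{k}_{V_0}=\mathcal{S}_{V_0}$; without them you would have to establish both the compactness of $\mathcal{S}_{V_0}$ \emph{and} the critical-level Palais--Smale property for $J_\e$ independently, duplicating the hardest parts of Section 5 under critical growth. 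The $\lambda$- and $p$-conditions in $(f'_3)$ are used by the paper only for the limiting autonomous problem (Lemma \ref{lem2.4ZCDO}, imported from \cite{Adie, He, JLZ}), not in any perturbed-level bubble estimate.
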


\subsection{State of the art and methodology}
We point out that Theorem \ref{thm1} and Theorem \ref{thm2} can be seen as the nonlocal fractional counterpart of Theorem $1.1$ in \cite{FIJ} and Theorem $1.1$ in \cite{ZCDO}, respectively. We recall that in \cite{FIJ} Figueiredo et al. refined some arguments developed in \cite{BJ,  BJT, BW}, in which the authors 
studied the existence and concentration of positive solutions {\color{red}{for}} the nonlinear Schr\"odinger equation
\begin{equation}\label{NSE}
-\e^{2}\Delta u+V(x)u=f(u) \mbox{ in } \R^{N},
\end{equation}
and involving general subcritical nonlinearities.
More precisely, Byeon and Jeanjean \cite{BJ} explored what are the essential features on $f$ which guarantee the existence of localized ground states. To do this, the authors developed a new variational approach which consists in searching solutions of \eqref{NSE} in a neighborhood of the set of the least energy solution of the limiting problem associated with \eqref{NSE} whose mass stays close to $\M$; see \cite{BJ2, BJT, BW} for more details. 
Subsequently, motivated by \cite{FIJ, ZZ}, Zhang et al. \cite{ZCDO} extended the result in \cite{FIJ} when $f$ is a general critical nonlinearity by applying a suitable truncation argument. 

The purpose of this work is to generalize the results in \cite{FIJ, ZCDO} to the fractional setting $s\in (0, 1)$.\\
For the sake of completeness, we start to mention some recent results in the case $M(t)\equiv 1$, that is when \eqref{P} reduces to the fractional Schr\"odinger equation \eqref{FSE}. Seok \cite{Seok} proved the existence of multi-peak solutions to \eqref{FSE} by assuming $(f_1)$-$(f_3)$ and extending in the nonlocal framework the result in \cite{BJ2}. In \cite{Seok}, the author  did not introduce a penalization term as in \cite{BJ, BJ2} but proved a kind of intersection lemma by using degree theory after transforming \eqref{FSE} into a degenerate elliptic problem via the extension method \cite{CS}. In \cite{JLZ} Jin et al. considered \eqref{FSE} under conditions $(f_1)$, $(f'_2)$-$(f'_3)$ and constructed a family of positive solutions to \eqref{FSE} which concentrates at a local minimum of $V$ as $\e\ri 0$.
The authors combined the extension method, a truncation argument inspired by \cite{ZCDO} with the result in \cite{Seok}. 
Simultaneously, He \cite{He} obtained the same result by applying the extension method and combining the penalization methods developed in \cite{BW} and \cite{DF}, respectively. We stress that this last approach has been previously used by Gloss \cite{Gloss} to extend the result in \cite{BJ} to a $p$-Laplacian problem involving a general subcritical nonlinearity. \\
We note that the results in \cite{He, JLZ, Seok} improve the previous ones obtained in \cite{AM, Aampa, HZ} in which the authors, motivated by \cite{DF}, considered nonlinearities satisfying the Ambrosetti-Rabinowitz condition \cite{AR} and by requiring that $\frac{f(t)}{t}$ is strictly increasing for $t>0$. Indeed, under assumptions $(f_1)$-$(f_3)$ or $(f_1)$, $(f'_2)$-$(f'_3)$, the Nehari method developed in the above mentioned papers does not work and it is very hard to verify the Palais-Smale  compactness condition in this situation; see \cite{Armi} for more details.\\
Concerning fractional Kirchhoff problems, to our knowledge, only few papers deal with the existence and concentration behavior of positive solutions  as $\e\ri 0$. In fact, motivated by \cite{AM, Aampa, HZ}, 
in \cite{Aasy, AI2, HZm} the authors 
studied the existence and concentration phenomena to \eqref{P} when $M(t)=a+bt$, $N=3$ and $s\in (\frac{3}{4},1)$.
However, the nonlinearities in \cite{Aasy, AI2, HZm} are less general than the ones presented here. \\
In this paper, by using suitable variational methods, we improve the results in \cite{Aasy, AI2, HZm} by considering a more general class of fractional Kirchhoff problems in the whole space $\R^{N}$, with $N\geq 2$. 
More precisely, after realizing \eqref{P} as a local linear degenerate
elliptic equation in $\R^{N+1}_{+}$ together with a nonlinear Neumann boundary condition on $\partial \R^{N+1}_{+}$,
we take inspiration by the penalization approach in \cite{BJ, DF, Gloss} and some arguments used in \cite{AM, Aasy, AI2, FIJ, He, JLZ, ZCDO}, to obtain the existence of a family of positive solutions which concentrates around a local minimum of the potential $V(x)$, as $\e\rightarrow 0$.
We emphasized that, making use of the extension method, several techniques used in the case $s=1$ cannot be directly adapted in our setting because we have to take care of the traces terms of the involved functions and to work with weighted Lebesgue spaces. Moreover, due to the presence of the Kirchhoff term, our analysis is much more delicate and intriguing with respect to the case $M(t)\equiv 1$ and $s\in (0, 1)$ discussed above. For instance, if $(u_{\e})$ is a bounded sequence in $\h$ of solutions to \eqref{P} such that $u_{\e}(\e x+x_{\e})\rightharpoonup u$ in $\h$ and $x_{\e}\ri x_{0}$ as $\e\ri 0$, then $u$ is solution to the limiting problem $\alpha_{0}(-\Delta)^{s}u+V(x_{0})u=f(u)$ in $\R^{N}$, where $\alpha_{0}:=\lim_{\e\ri 0} M([u_{\e}]_{s}^{2})$, and in general it is complicated to verify that $\alpha_{0}=M([u]^{2}_{s})$. Therefore, some refined estimates will be needed to overcome these difficulties; see Lemma \ref{lem4.1G} and Lemma \ref{Klem}.\\
As far as we know, these are the first existence results for \eqref{P} under local assumptions on the potential $V$ and general nonlinearities $f$ with subcritical or critical growth.

\smallskip
\noindent
The paper is organized as follows. In section $2$ we introduce the notations and we recall some useful results. In section $3$ we study the limiting Kirchhoff problem associated with \eqref{P} by assuming $(f_1)$-$(f_3)$. The critical limiting Kirchhoff problem is considered in section $4$. In section $5$ we provide the proof of Theorem \ref{thm1}. The last section is devoted to the proof of Theorem \ref{thm2}.


\section{preliminaries}
In this section we fix the notations and collect some preliminary results for future references.
For more details we refer to \cite{CSire, CS, DPV, DMV, MBRS}.

We denote the upper half-space in $\R^{N+1}$ by
$$
\R^{N+1}_{+}:=\{(x, y)\in \R^{N+1}: y>0\}.
$$
For $p\in [1, \infty]$, let $L^{p}(\R^{N})$ be the set of measurable functions $u: \R^{N}\ri \R$ such that
\begin{equation*}
|u|_{p}:=\left\{
\begin{array}{ll}
\left(\int_{\R^{N}} |u|^{p}\, dx\right)^{1/p}<\infty &\mbox{ if } p<\infty, \\
{\rm esssup}_{x\in \R^{N}} |u(x)|  &\mbox{ if } p=\infty.
\end{array}
\right.
\end{equation*}
Let $\mathcal{D}^{s, 2}(\R^{N})$, with $s\in (0, 1)$, be the completion of $C^{\infty}_{c}(\R^{N})$ with respect to the Gagliardo seminorm
$$
[u]_{s}:=\left( \iint_{\R^{2N}} \frac{|u(x)-u(y)|^{2}}{|x-y|^{N+2s}}\, dx dy \right)^{\frac{1}{2}}.
$$
Then (see \cite{DPV}) the embedding $\mathcal{D}^{s, 2}(\R^{N})\subset L^{\2}(\R^{N})$ is continuous and 
$$
|u|_{\2}\leq c(N,s) [u]_{s} \quad \forall u\in \mathcal{D}^{s, 2}(\R^{N}).
$$
Denote by $H^{s}(\R^{N})$ the fractional Sobolev space
$$
H^{s}(\R^{N}):=\{u\in L^{2}(\R^{N}): [u]_{s}<\infty\}
$$
endowed with the norm
$$
\|u\|_{\h}:=\left([u]_{s}^{2}+|u|^{2}_{2}\right)^{\frac{1}{2}}.
$$
Then, $H^{s}(\R^{N})$ is continuously embedded in $L^{p}(\R^{N})$ for all $p\in [2, \2)$ and compactly in  $L^{p}_{loc}(\R^{N})$ for all $p\in [1, \2)$; see \cite{DPV}. We also define the fractional radial Sobolev space
$$
H^{s}_{\rm rad}(\R^{N}):=\{u\in \h: u(x)=u(|x|)\}.
$$
It is well-known (see \cite{Lions}) that $H^{s}_{\rm rad}(\R^{N})$ is compactly embedded in $L^{q}(\R^{N})$ for all $q\in (2, \2)$.

Let us define $X^{s}(\R^{N+1}_{+})$ as the completion of $C^{\infty}_{c}(\overline{\R^{N+1}_{+}})$ under the norm
$$
\|u\|_{X^{s}(\R^{N+1}_{+})}:=  \left(\iint_{\R^{N+1}_{+}} y^{1-2s} |\nabla u|^{2}\, dx dy  \right)^{\frac{1}{2}}.
$$
Then (see \cite{BCDPS}) there exists a linear trace operator ${\rm Tr}: \x\ri \mathcal{D}^{s,2}(\R^{N})$ such that 
$$
\sqrt{\kappa_{s}} [{\rm Tr}(u)]_{s}\leq \|u\|_{\x} \mbox{ for any } u\in \x,
$$ 
where $\kappa_{s}:=2^{1-2s}\Gamma(1-s)/\Gamma(s)$. In what follows, we set $u(\cdot, 0):={\rm Tr}(u)$.\\
Denote by 
$$
B^{+}_{R}(x_{0}, y_{0}):=\{(x, y)\in \R^{N+1}_{+}: |(x,y)-(x_{0}, y_{0})|<R\}
$$ 
the open ball in $\R^{N+1}_{+}$ with center $(x_0, y_0)\in \R^{N+1}_{+}$ and radius $R>0$, and 
$$
\Gamma_{R}^{0}(z_{0}):=\{(x, 0)\in \partial \R^{N+1}_{+}: |x-z_{0}|<R\}
$$ 
the ball in $\R^{N}$ with center $z_{0}\in \R^{N}$ and radius $R>0$.\\
We denote by $X^{s}_{0}(B_{R}^{+}(0,0))$, with $R>0$,  the completion of $C^{\infty}_{c}(B_{R}^{+}(0,0)\cup \Gamma^{0}_{R}(0))$ under the norm
$$
\|u\|_{X^{s}_{0}(B_{R}^{+}(0,0))}:=  \left(\iint_{B_{R}^{+}(0,0)} y^{1-2s} |\nabla u|^{2}\, dx dy  \right)^{\frac{1}{2}}.
$$
Note that if $w\in X^{s}_{0}(B_{R}^{+}(0,0))$ then its extension by zero outside $B_{R}^{+}(0,0)$ can be approximated by functions with compact support in $\overline{\R^{N+1}_{+}}$.
Moreover, for all $r\in [1, \2]$ and $u\in X^{s}_{0}(B_{R}^{+}(0,0))$ it holds (see \cite{BCDPS})
$$
C(r,s,N,R)\left(\int_{\Gamma_{R}^{0}(0)} |u(\cdot, 0)|^{r}\, dx\right)^{\frac{2}{r}}\leq \iint_{B_{R}^{+}(0,0)} y^{1-2s}|\nabla u|^{2}\, dxdy.
$$
We define 
$$
X^{1,s}(\R^{N+1}_{+}):=\left\{u\in X^{s}(\R^{N+1}_{+}): \int_{\R^{N}} u^{2}(x,0)\, dx<\infty  \right\}
$$
equipped with the norm
$$
\|u\|_{X^{1,s}(\R^{N+1}_{+})}:=\left(\iint_{\R^{N+1}_{+}} y^{1-2s} |\nabla u|^{2}\, dx dy+\int_{\R^{N}} u^{2}(x,0)\, dx   \right)^{\frac{1}{2}}.
$$
Finally, we consider
$$
X^{1,s}_{\rm rad}(\R^{N+1}_{+}):=\{u\in \X: u(x,y)=u(|x|,y)\}.
$$
\noindent
The following Sobolev inequality holds true:
\begin{lem}\cite{BCDPS}\label{Sobolev}
For every $u\in \X$ it holds for some positive constant $S(s, N)>0$
$$
S(s,N) \left(\int_{\R^{N}} |u(x, 0)|^{\2}\, dx\right)^{\frac{2}{\2}}\leq \iint_{\R^{N+1}_{+}} y^{1-2s} |\nabla u|^{2}\, dx dy.
$$
\end{lem}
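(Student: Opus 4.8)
The plan is to deduce the inequality by composing two facts already recorded in this section: the continuity of the trace operator ${\rm Tr}\colon\x\ri\mathcal{D}^{s,2}(\R^{N})$ and the continuity of the embedding $\mathcal{D}^{s,2}(\R^{N})\subset L^{\2}(\R^{N})$. Since by definition $\X\subset\x$ and the $\x$-seminorm of any $u\in\X$ is bounded by $\|u\|_{\X}$, every $u\in\X$ has a well-defined trace $u(\cdot,0)={\rm Tr}(u)\in\mathcal{D}^{s,2}(\R^{N})$ satisfying
$$
\sqrt{\kappa_{s}}\,[u(\cdot,0)]_{s}\leq\left(\iint_{\R^{N+1}_{+}}y^{1-2s}|\nabla u|^{2}\,dx\,dy\right)^{\frac{1}{2}}.
$$
Applying next $|u(\cdot,0)|_{\2}\leq c(N,s)\,[u(\cdot,0)]_{s}$ and combining, we obtain
$$
|u(\cdot,0)|_{\2}\leq\frac{c(N,s)}{\sqrt{\kappa_{s}}}\left(\iint_{\R^{N+1}_{+}}y^{1-2s}|\nabla u|^{2}\,dx\,dy\right)^{\frac{1}{2}},
$$
and squaring yields the claim with $S(s,N):=\kappa_{s}/c(N,s)^{2}$, no optimality of the constant being required.

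If one prefers a self-contained argument that avoids the abstract trace theorem, I would proceed by density. For $u\in C^{\infty}_{c}(\overline{\R^{N+1}_{+}})$, let $v$ be the minimizer of $w\mapsto\iint_{\R^{N+1}_{+}}y^{1-2s}|\nabla w|^{2}\,dx\,dy$ among all $w$ with the same boundary datum $u(\cdot,0)$; equivalently, $v$ solves $\dive(y^{1-2s}\nabla v)=0$ in $\R^{N+1}_{+}$ with $v(\cdot,0)=u(\cdot,0)$. The Caffarelli--Silvestre identity gives $\iint_{\R^{N+1}_{+}}y^{1-2s}|\nabla v|^{2}\,dx\,dy=\kappa_{s}\,[u(\cdot,0)]_{s}^{2}$, so that by minimality (as $u$ is an admissible competitor) and the fractional Sobolev inequality
$$
\iint_{\R^{N+1}_{+}}y^{1-2s}|\nabla u|^{2}\,dx\,dy\geq\kappa_{s}\,[u(\cdot,0)]_{s}^{2}\geq\frac{\kappa_{s}}{c(N,s)^{2}}\,|u(\cdot,0)|_{\2}^{2}.
$$
Applying this to differences $u_{n}-u_{m}$ of an approximating sequence $(u_{n})\subset C^{\infty}_{c}(\overline{\R^{N+1}_{+}})$ with $u_{n}\to u$ in $\x$ shows that $(u_{n}(\cdot,0))$ is Cauchy in $L^{\2}(\R^{N})$; its limit equals $u(\cdot,0)$, and passing to the limit in the previous display gives the assertion for every $u\in\X$.

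I expect no substantial obstacle here: once the trace estimate with constant $\kappa_{s}$ and the fractional Sobolev embedding of $\mathcal{D}^{s,2}(\R^{N})$ are in hand (both already imported from \cite{BCDPS, DPV}), the rest is just bookkeeping of constants. The one place that will need a little care is the density step, namely that smooth functions compactly supported in $\overline{\R^{N+1}_{+}}$ are dense in $\x$ (which is exactly how $\x$ is defined) and that the trace is stable under this limiting process, which is precisely what the Cauchy argument above provides.
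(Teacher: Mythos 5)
Your first argument is correct and is precisely the route one would take: the paper states this lemma with a citation to \cite{BCDPS} rather than proving it, but the two ingredients you combine — the trace estimate $\sqrt{\kappa_{s}}[{\rm Tr}(u)]_{s}\leq\|u\|_{\x}$ and the fractional Sobolev embedding $|u|_{\2}\leq c(N,s)[u]_{s}$ — are exactly the facts recorded in the preliminaries immediately before the lemma, so composing them (with $S(s,N)=\kappa_{s}/c(N,s)^{2}$) is the intended derivation. Your second, density-based argument via the Caffarelli--Silvestre harmonic replacement is also sound and is a reasonable self-contained alternative; it trades the abstract trace theorem for the minimality property of the $s$-harmonic extension, but yields the same constant and the same conclusion.
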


\noindent
For all $r\in (1, \infty)$, we define the weighted Lebesgue space $L^{r}(\R^{N+1}_{+}, y^{1-2s})$ endowed with the norm
$$
\iint_{\R^{N+1}_{+}} y^{1-2s} |u|^{r}\, dx dy.
$$
We recall the following useful result proved in \cite{DMV}:
\begin{lem}\label{lem2.1}\cite{DMV}
\begin{compactenum}[$(i)$]
\item There exists a constant $C>0$ such that for all $w\in X^{s}(\R^{N+1}_{+})$ it holds
$$
\left( \iint_{\R^{N+1}_{+}} y^{1-2s} |w|^{2\gamma}\, dx dy \right)^{\frac{1}{2\gamma}}\leq C\left(\iint_{\R^{N+1}_{+}} y^{1-2s} |\nabla w|^{2} \, dx dy \right)^{\frac{1}{2}},
$$
where $\gamma:=1+\frac{2}{N-2s}$.
\item Let $R>0$ and $\mathcal{T}$ be a subset of $X^{s}(\R^{N+1}_{+})$ such that
$$
\sup_{u\in \mathcal{T}} \int_{\R^{N+1}_{+}} y^{1-2s} |\nabla W|^{2}\, dx dy<\infty.
$$
Then, $\mathcal{T}$ is compact in $L^{2}(B_{R}^{+}(0,0), y^{1-2s})$.
\end{compactenum}
\end{lem}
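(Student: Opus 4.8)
The plan is to establish $(i)$ by a reflection argument combined with the weighted Sobolev inequality for power weights, and then to deduce the compactness statement $(ii)$ from $(i)$ by a cut-off and diagonal argument that isolates the degeneracy of the weight at $\{y=0\}$.

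For $(i)$, by density of $C^{\infty}_{c}(\overline{\R^{N+1}_{+}})$ in $X^{s}(\R^{N+1}_{+})$ it suffices to prove the inequality for $w\in C^{\infty}_{c}(\overline{\R^{N+1}_{+}})$. Extend such a $w$ to $\R^{N+1}$ by even reflection across $\{y=0\}$, i.e.\ set $\tilde{w}(x,y):=w(x,|y|)$; then $\tilde{w}$ is Lipschitz with compact support and
$$
\iint_{\R^{N+1}} |y|^{1-2s}|\nabla \tilde{w}|^{2}\,dx\,dy=2\iint_{\R^{N+1}_{+}} y^{1-2s}|\nabla w|^{2}\,dx\,dy,
$$
and similarly for $|\tilde{w}|^{2\gamma}$. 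Since $s\in(0,1)$, the exponent $1-2s$ belongs to $(-1,1)$, so $|y|^{1-2s}$ is a Muckenhoupt $A_{2}$ weight on $\R^{N+1}$ (being an $A_{2}$ weight in the single variable $y$). Setting $Q:=(N+1)+(1-2s)=N+2-2s$ for the homogeneous dimension attached to this weight, a direct computation gives $\frac{2Q}{Q-2}=\frac{2(N+2-2s)}{N-2s}=2\gamma$, so the weighted Sobolev inequality for the power weight $|y|^{1-2s}$ (which can be found in the literature on $A_{2}$ weights, or derived from the weighted isoperimetric inequality for the measure $|y|^{1-2s}\,dx\,dy$ via the coarea formula) yields
$$
\left(\iint_{\R^{N+1}} |y|^{1-2s}|\tilde{w}|^{2\gamma}\,dx\,dy\right)^{\frac{1}{2\gamma}}\le C\left(\iint_{\R^{N+1}} |y|^{1-2s}|\nabla \tilde{w}|^{2}\,dx\,dy\right)^{\frac{1}{2}}.
$$
Undoing the reflection gives $(i)$.

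For $(ii)$, let $(u_{n})$ be a sequence in $\mathcal{T}$. By hypothesis $\sup_{n}\iint_{\R^{N+1}_{+}} y^{1-2s}|\nabla u_{n}|^{2}\,dx\,dy<\infty$, so by $(i)$ the sequence is bounded in $L^{2\gamma}(\R^{N+1}_{+},y^{1-2s})$; in particular, since $B^{+}_{R}(0,0)$ has finite $y^{1-2s}$-measure, $(u_{n})$ is bounded in $L^{2}(B^{+}_{R}(0,0),y^{1-2s})$. The key point is that the weight has vanishing mass near the boundary: for $\delta\in(0,R)$,
$$
\iint_{B^{+}_{R}(0,0)\cap\{y<\delta\}} y^{1-2s}\,dx\,dy\le C(N)R^{N}\int_{0}^{\delta} y^{1-2s}\,dy=\frac{C(N)R^{N}}{2-2s}\,\delta^{2-2s},
$$
which tends to $0$ as $\delta\to 0$. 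Hence, by Hölder's inequality with exponents $\gamma$ and $\gamma/(\gamma-1)$,
$$
\iint_{B^{+}_{R}(0,0)\cap\{y<\delta\}} y^{1-2s}|u_{n}|^{2}\,dx\,dy\le\left(\iint_{\R^{N+1}_{+}} y^{1-2s}|u_{n}|^{2\gamma}\,dx\,dy\right)^{\frac{1}{\gamma}}\left(\frac{C(N)R^{N}}{2-2s}\,\delta^{2-2s}\right)^{1-\frac{1}{\gamma}},
$$
which is small uniformly in $n$ once $\delta$ is small. On the complementary region $B^{+}_{R}(0,0)\cap\{y\ge\delta\}$ the weight $y^{1-2s}$ is bounded above and below by positive constants, so $(u_{n})$ is bounded in $H^{1}(B^{+}_{R}(0,0)\cap\{y\ge\delta\})$ (the gradient bound is assumed, and the $L^{2}$ bound follows from the $L^{2\gamma}(y^{1-2s})$ bound since there the weight is comparable to $1$); by the classical Rellich--Kondrachov theorem $(u_{n})$ is precompact in $L^{2}(B^{+}_{R}(0,0)\cap\{y\ge\delta\})$, hence in $L^{2}(B^{+}_{R}(0,0)\cap\{y\ge\delta\},y^{1-2s})$. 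Combining the two pieces and extracting diagonally along a sequence $\delta_{k}\to 0$ produces a subsequence of $(u_{n})$ which is Cauchy in $L^{2}(B^{+}_{R}(0,0),y^{1-2s})$, which is $(ii)$.

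The main obstacle is the degeneracy (for $s<\tfrac12$) or singularity (for $s>\tfrac12$) of the weight $y^{1-2s}$ on $\{y=0\}$: near that set the classical Rellich--Kondrachov theorem does not apply, and one must quantify that the portion of the $L^{2}(y^{1-2s})$-mass concentrated near $\{y=0\}$ is uniformly negligible. This is precisely where the higher integrability provided by $(i)$ (the bound in $L^{2\gamma}(y^{1-2s})$) together with the elementary fact $\int_{0}^{\delta} y^{1-2s}\,dy\to 0$ as $\delta\to 0$---valid exactly because $1-2s>-1$---enters.
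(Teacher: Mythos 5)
The paper does not supply its own proof of this lemma; it is cited directly from \cite{DMV}, so there is no in-paper argument to compare against. That said, your proof is correct and follows the standard route for such weighted estimates: for $(i)$, even reflection in $y$, the observation that $|y|^{1-2s}$ is a Muckenhoupt $A_{2}$ power weight on $\R^{N+1}$, and the associated Sobolev inequality with homogeneous dimension $N+2-2s$ (the arithmetic $\frac{2(N+2-2s)}{N-2s}=2\gamma$ checks out); for $(ii)$, the near-boundary strip is controlled uniformly via H\"older against the $L^{2\gamma}(y^{1-2s})$ bound from $(i)$ together with the vanishing of the weighted measure of $\{0<y<\delta\}$, the interior region is handled by the classical Rellich--Kondrachov theorem (the weight being comparable to a constant there), and a diagonal extraction finishes. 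One small remark: in the interior bound for the gradient you should note that for $s>\tfrac12$ the lower bound on $y^{1-2s}$ on $\{\delta\le y\le R\}$ is attained at $y=R$ rather than $y=\delta$, but either way the weight is bounded above and below by positive constants on that set, so the argument goes through.
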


The fractional Laplacian $(-\Delta)^{s}$ may be defined for $u:\R^{N}\ri \R$  belonging to the Schwartz space of rapidly decaying functions by
$$
(-\Delta)^{s}u(x)=C(N,s) P. V. \int_{\R^{N}} \frac{u(x)-u(y)}{|x-y|^{N+2s}}\, dy
$$
where
$$
C(N,s):=\left(\int_{\R^{N}} \frac{1-\cos(x_1)}{|x|^{N+2s}}\, dx\right)^{\frac{1}{2}}.
$$
It can be also defined using Fourier transform by
$$
\mathcal{F}((-\Delta)^{s}u(k))=|k|^{2s}\mathcal{F}u(k). 
$$
It is well-known (see \cite{DPV}) that for all $u\in \h$
$$
|(-\Delta)^{\frac{s}{2}}u|_{2}^{2}=\int_{\R^{N}}|k|^{2s} |\mathcal{F}u(k)|^{2}\, dk=\frac{1}{2} C(N,s) [u]^{2}_{s}.
$$
In \cite{CS}, it is showed that one can see $(-\Delta)^{s}$ by considering it as the Dirichlet to Neumann operator associated to the $s$-harmonic extension in the half-space, paying the price to add a new variable.
More precisely, for any $u\in \mathcal{D}^{s,2}(\R^{N})$ there exists a unique function $U\in \x$ solving the following problem
\begin{align*}
\left\{
\begin{array}{ll}
-\dive(y^{1-2s} \nabla U)=0 &\mbox{ in } \R^{N+1}_{+}, \\
U(\cdot, 0)=u &\mbox{ on } \partial \R^{N+1}_{+}=\R^{N}. 
\end{array}
\right.
\end{align*}
The function $U$ is called the $s$-harmonic extension of $u$ and possesses the following properties:
\begin{compactenum}[$(i)$]
\item
$$
\frac{\partial U}{\partial \nu^{1-2s}}:=-\lim_{y\ri 0} y^{1-2s} \frac{\partial U}{\partial y}(x,y)=\kappa_{s}(-\Delta)^{s}u(x) \mbox{ in distribution sense, }
$$
\item $\sqrt{\kappa_{s}}[u]_{s}=\|U\|_{\x}\leq \|V\|_{\x}$ for all $V\in \x$ such that $V(\cdot,0)=u$.
\item $U\in C^{\infty}(\R^{N+1}_{+})\cap L^{2}(K, y^{1-2s})$ for any compact set $K\subset \overline{\R^{N+1}_{+}}$,
$$
U(x, y)=\int_{\R^{N}} P_{s}(x-z,y) u(z)\, dz
$$
where
$$
P_{s}(x,y):=p_{N,s} \frac{y^{2s}}{(|x|^{2}+y^{2})^{\frac{N+2s}{2}}}
$$
and $p_{N,s}$ is a positive constant such that $\int_{\R^{N}} P_{s}(x,y)\, dx=1$ for all $y>0$.
\end{compactenum}

\noindent
Using the change of variable $x\mapsto \e x$, it is possible to prove that \eqref{P} is equivalent to the following problem
\begin{equation}\label{SP}
\left\{
\begin{array}{ll}
M([u]^{2}_{s})(-\Delta)^{s}u + V_{\e}(x) u= f(u) &\mbox{ in } \R^{N}, \\
u\in H^{s}(\R^{N}), \quad u>0 &\mbox{ in } \R^{N},
\end{array}
\right.
\end{equation}
where $V_{\e}(x):=V(\e x)$.
Then, in view of the previous facts, problem \eqref{SP} can be realized in a local manner through the nonlinear boundary value problem:
\begin{align}\label{EP}
\left\{
\begin{array}{ll}
-\dive(y^{1-2s} \nabla w)=0 &\mbox{ in } \R^{N+1}_{+}, \\
\frac{1}{M(\|w\|^{2}_{\x})}\frac{\partial w}{\partial \nu^{1-2s}}=\kappa_{s} [-V_{\e} w(\cdot, 0)+f(w(\cdot, 0))] &\mbox{ in } \R^{N}. 
\end{array}
\right.
\end{align}
For simplicity we will drop the constant $\kappa_{s}$ from the second equation in \eqref{EP}.

\section{Subcritical limiting problems}
We begin by modifying $f$ {\color{red}{as in}} \cite{BL}. Let $\hat{f}: \R\ri \R$ be defined as follows:
\begin{compactenum}[$(i)$]
\item if $f(t)>0$ for all $t\geq \wh{T}$, put $\hat{f}(t):=f(t)$,
\item if there exists $\tau_{0}\geq \wh{T}$ such that $f(\tau_{0})=0$, we put
\begin{equation*}
\hat{f}(t):=\left\{
\begin{array}{ll}
f(t) &\mbox{ for } t<\tau_{0}, \\
0  &\mbox{ for } t\geq \tau_{0},
\end{array}
\right.
\end{equation*}
where $\wh{T}:=\sup\{ t\in [0, T]: f(t)>V_{0}t \}$.
\end{compactenum}
Note that $\hat{f}$ satisfies the same assumptions as $f$ and
$$
0\leq \liminf_{t\ri \infty} \frac{\hat{f}(t)}{t^{p}}\leq \limsup_{t\ri \infty} \frac{\hat{f}(t)}{t^{p}}<\infty.
$$
Moreover, if $(ii)$ occurs and $u$ is a solution to \eqref{P} with $\hat{f}(t)$, then we can use $(u-\tau_{0})_{+}$ as test function to deduce that $u\leq \tau_{0}$ in $\R^{N}$, that is $u$ is a solution to \eqref{P} with $f(t)$. From now on, we replace $f$ by $\hat{f}$ and  keep the same notation $f(t)$.

\noindent
In this section we focus on the following limiting problem associated with \eqref{EP}:
\begin{align}\label{LP}
\left\{
\begin{array}{ll}
-\dive(y^{1-2s} \nabla w)=0 &\mbox{ in } \R^{N+1}_{+}, \\
\frac{1}{M(\|w\|^{2}_{\x})}\frac{\partial w}{\partial \nu^{1-2s}}=-V_{0} w(\cdot, 0)+f(w(\cdot, 0)) &\mbox{ in } \R^{N}. 
\end{array}
\right.
\end{align}
To obtain our results we take inspiration by some arguments used in \cite{FIJ, HL}.
Firstly, we show that the solutions of \eqref{LP} satisfy a Pohozaev identity.
\begin{lem}\label{lem2.11FIJ}
Assume that $(M1)$ holds and $u\in \X$ is a solution to \eqref{LP}. Then $u$ satisfies the following Pohozaev type identity: 
\begin{align*}
P(u) := \frac{N-2s}{2} M( \|u\|^{2}_{\x} ) \|u\|^{2}_{\x} - N \int_{\R^{N}} F(u(x,0)) - \frac{V_{0}}{2} u^{2}(x,0)\, dx =0. 
\end{align*} 
\end{lem}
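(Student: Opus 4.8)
The plan is to derive the Pohozaev identity for the extension problem \eqref{LP} by testing the equation against a suitable vector field applied to $u$, namely by using the dilation generator acting on the extended function. Since $u\in\X$ solves a degenerate linear elliptic equation $-\dive(y^{1-2s}\nabla u)=0$ in $\R^{N+1}_+$ with the nonlinear Neumann condition on the boundary, the natural test function is $z\cdot\nabla u(z)$ where $z=(x,y)\in\R^{N+1}_+$ (the full $(N+1)$-dimensional radial derivative), possibly after a cutoff $\varphi_R(z)=\varphi(z/R)$ to justify the integrations by parts, and then letting $R\to\infty$. The key point is that for the $s$-harmonic extension the weight $y^{1-2s}$ interacts cleanly with the scaling $z\mapsto\lambda z$.

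First I would record the weak formulation: for every $\psi\in\X$,
\[
M(\|u\|^2_\x)\iint_{\R^{N+1}_+} y^{1-2s}\nabla u\cdot\nabla\psi\,dx\,dy
= \int_{\R^N}\bigl(-V_0 u(x,0)+f(u(x,0))\bigr)\psi(x,0)\,dx .
\]
Then I would formally take $\psi = z\cdot\nabla u$. For the left-hand side, the standard Rellich–Pohozaev computation for the operator $\dive(y^{1-2s}\nabla\cdot)$ gives, after integrating by parts and using that $u$ is a solution (so the bulk term $-\dive(y^{1-2s}\nabla u)=0$ contributes nothing), a boundary term on $\{y=0\}$. The careful bookkeeping of the weight shows the bulk contribution reduces to $\frac{N-2s}{2}\iint y^{1-2s}|\nabla u|^2 = \frac{N-2s}{2}\|u\|^2_\x$, because the weight $y^{1-2s}$ scales with exponent $1-2s$ and the measure $dx\,dy$ with exponent $N+1$, so the effective homogeneity produces the factor $(N+1)+(1-2s)-2 = N-2s$, divided by $2$ from the quadratic form; the tangential derivative terms on the boundary vanish since $y^{1-2s}\to 0$ there in the appropriate sense and the normal contribution is exactly what pairs against the Neumann data. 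For the right-hand side, writing $\psi(x,0)=x\cdot\nabla_x u(x,0)$, an integration by parts in $\R^N$ against $-V_0 u+f(u)$ yields $-N\int_{\R^N}\bigl(F(u(x,0))-\tfrac{V_0}{2}u^2(x,0)\bigr)dx$, using the primitive $F$ of $f$ and the divergence structure $x\cdot\nabla_x\bigl(F(u)-\tfrac{V_0}{2}u^2\bigr)=\dive_x\bigl(x(F(u)-\tfrac{V_0}{2}u^2)\bigr)-N(F(u)-\tfrac{V_0}{2}u^2)$. Equating the two sides gives $P(u)=0$.

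The main obstacle is rigor rather than the formal identity: $z\cdot\nabla u$ need not lie in $\X$, so one must work with truncated test functions $\varphi_R(z)\,z\cdot\nabla u$ and control the error terms as $R\to\infty$ (terms involving $\nabla\varphi_R$, which carry a factor $1/R$, multiplied by $y^{1-2s}|\nabla u|^2|z|$ integrated over the annulus $R\le|z|\le 2R$), and one must also know enough regularity of $u$ near $\{y=0\}$ to integrate by parts on the boundary — this is exactly why the regularity hypotheses on $f$ (and the reference to Proposition~1.1 in \cite{BKS}) are invoked in the Remark following Theorem~\ref{thm1}. In practice I would either cite an existing Pohozaev identity for such extension problems (as in \cite{FIJ} for $s=1$, adapted via \cite{BKS} for the fractional case) and verify the hypotheses, or carry out the cutoff argument: choose $\varphi_R$ with $\varphi_R\equiv 1$ on $B^+_R(0,0)$, supported in $B^+_{2R}(0,0)$, $|\nabla\varphi_R|\le C/R$; show $\iint_{B^+_{2R}\setminus B^+_R} y^{1-2s}|\nabla u|^2\to 0$ as $R\to\infty$ since $u\in\X$, so that $\frac1R\iint_{B^+_{2R}\setminus B^+_R} y^{1-2s}|z||\nabla u|^2 \le C\iint_{B^+_{2R}\setminus B^+_R} y^{1-2s}|\nabla u|^2\to 0$; and similarly handle the boundary remainder using the decay of $u(\cdot,0)$ coming from the pointwise bound in Theorem~\ref{thm1} (or, at the level of this lemma, the integrability $u(\cdot,0)\in L^2\cap L^{\2}(\R^N)$ together with $f$ having subcritical growth). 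Note $(M1)$ is used only to guarantee $M(\|u\|^2_\x)\ge m_0>0$ so that dividing by $M(\|u\|^2_\x)$ in \eqref{LP} is legitimate and $\|u\|_\x<\infty$ gives a genuine solution; no monotonicity of $M$ is needed here. Once the limit $R\to\infty$ is taken, the identity $P(u)=0$ follows.
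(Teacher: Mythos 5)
Your argument is correct and takes essentially the same approach as the paper: once $\alpha_0:=M(\|u\|^2_\x)$ is frozen, $u$ solves a constant-coefficient degenerate elliptic problem with nonlinear Neumann data, and the Pohozaev identity for that problem yields $P(u)=0$. The paper simply cites \cite{Aade, Adie, BKS, CW} for the Pohozaev step, whereas you sketch the Rellich-type derivation (testing with $z\cdot\nabla u$, cutting off, and controlling the errors); both are fine and the coefficients $\tfrac{N-2s}{2}$ and $N$ you obtain are the right ones.
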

\begin{proof}
Put $\alpha_{0}:=M( \|u\|^{2}_{\x})$. Then $u$ is a solution to 
\begin{align*}
\left\{
\begin{array}{ll}
-\dive(y^{1-2s} \nabla u)=0 &\mbox{ in } \R^{N+1}_{+}, \\
\frac{1}{\alpha_{0}}\frac{\partial u}{\partial \nu^{1-2s}}=-V_{0} u(\cdot, 0)+f(u(\cdot, 0)) &\mbox{ in } \R^{N}. 
\end{array}
\right.
\end{align*}
Arguing as in \cite{Aade, Adie, BKS, CW}, we deduce that $u$ satisfies the following Pohozaev identity
$$
\frac{N-2s}{2} \alpha_{0} \|u\|^{2}_{\x} - N \int_{\R^{N}} F(u(x,0)) - \frac{V_{0}}{2} u^{2}(x,0)\, dx =0
$$
which implies the thesis.
\end{proof}

In order to find weak solutions to \eqref{LP}, we look for critical points of the energy functional $L_{V_{0}}: \X\ri \R$ defined as
$$
L_{V_{0}}(u):=\frac{1}{2} \wh{M}\left(\|u\|^{2}_{X^{s}(\R^{N+1}_{+})}\right)+\frac{1}{2}\int_{\R^{N}} V_{0} u^{2}(x,0)\, dx-\int_{\R^{N}} F(u(x,0))\, dx.
$$
From $(f_1)$-$(f_2)$, it is easy to check that $L_{V_{0}}\in C^{1}(\X, \R)$. Moreover, we see that $L_{V_{0}}$ possesses a nice 
geometric structure. 
\begin{lem}\label{MPTL}
Assume $(M1)$-$(M3)$. Then, $L_{V_{0}}$ has a mountain pass geometry.
\end{lem}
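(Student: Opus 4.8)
The plan is to verify the two standard geometric conditions: (a) there exist $\rho, \delta>0$ such that $L_{V_0}(u)\geq\delta$ whenever $\|u\|_{\x}=\rho$, and (b) there exists $e\in\X$ with $\|e\|_{\x}>\rho$ and $L_{V_0}(e)<0$ (together with $L_{V_0}(0)=0$, which is immediate since $F(0)=0$ and $\wh{M}(0)=0$).

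For condition (a), I would first use $(M1)$ to bound the Kirchhoff term from below: since $M(\tau)\geq m_0$ for all $\tau\geq 0$, we get $\wh{M}(t)=\int_0^t M(\tau)\,d\tau\geq m_0 t$, hence $\tfrac12\wh{M}(\|u\|^2_{\x})\geq \tfrac{m_0}{2}\|u\|^2_{\x}$. Combining with the nonnegative term $\tfrac12\int_{\R^N}V_0 u^2(x,0)\,dx$, the quadratic part of $L_{V_0}$ controls $\|u\|^2_{\x}$ from below (up to the constant $m_0/2$). Then I handle the nonlinear term: from $(f_1)$ and $(f_2)$, for any $\xi>0$ there is $C_\xi>0$ with $|f(t)|\leq \xi|t|+C_\xi|t|^{p}$, so $|F(t)|\leq \tfrac{\xi}{2}|t|^2+\tfrac{C_\xi}{p+1}|t|^{p+1}$. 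Integrating over $\R^N$ and using the trace/Sobolev embeddings — the inequality $\sqrt{\kappa_s}[{\rm Tr}(u)]_s\leq\|u\|_{\x}$ together with $\mathcal D^{s,2}(\R^N)\hookrightarrow L^{\2}(\R^N)$ and interpolation, or more directly the boundedness of $u\mapsto u(\cdot,0)$ from $\X$ into $L^{p+1}(\R^N)$ for $p+1\in[2,\2]$ — gives $\int_{\R^N}|F(u(x,0))|\,dx\leq \tfrac{\xi}{2}C_1\|u\|^2_{\x}+C_2\|u\|^{p+1}_{\x}$. Choosing $\xi$ small enough that $\tfrac{\xi}{2}C_1<\tfrac{m_0}{4}$, we obtain $L_{V_0}(u)\geq \tfrac{m_0}{4}\|u\|^2_{\x}-C_2\|u\|^{p+1}_{\x}$, and since $p+1>2$ this is bounded below by a positive constant $\delta$ on the sphere $\|u\|_{\x}=\rho$ for $\rho>0$ small.

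For condition (b), the classical trick is to pick a suitable test function and scale it. Fix any $w\in\X\setminus\{0\}$ with $w(\cdot,0)\geq 0$, and for $t>0$ consider the dilation $w_t(x,y):=w(x/t,y/t)$, so that $\|w_t\|^2_{\x}=t^{N-2s}\|w\|^2_{\x}$ while $\int_{\R^N}G(w_t(x,0))\,dx=t^{N}\int_{\R^N}G(w(x,0))\,dx$ for any integrand $G$. Then
$$
L_{V_0}(w_t)=\tfrac12\wh{M}\!\left(t^{N-2s}\|w\|^2_{\x}\right)+t^{N}\!\int_{\R^N}\left[\tfrac{V_0}{2}w^2(x,0)-F(w(x,0))\right]dx.
$$
By $(f_3)$ (via the modified $\hat f$, for which $\wh T$ is defined and $F(T)>\tfrac{V_0}{2}T^2$ still holds) one can choose $w$ — for instance a function taking the constant value $T$ on a large ball and smoothly cut off — so that $\int_{\R^N}\big[\tfrac{V_0}{2}w^2(x,0)-F(w(x,0))\big]dx<0$; this is the standard Berestycki–Lions-type construction. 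Meanwhile $(M3)$, namely $M(t)/t^{2s/(N-2s)}\to 0$, forces $\wh{M}(t)=o\big(t^{1+2s/(N-2s)}\big)=o\big(t^{N/(N-2s)}\big)$ as $t\to\infty$, so $\wh{M}(t^{N-2s}\|w\|^2_{\x})=o(t^{N})$. Hence the negative $t^N$ term dominates and $L_{V_0}(w_t)\to-\infty$ as $t\to\infty$; taking $e:=w_t$ for $t$ large finishes the proof.

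The main obstacle is the verification in condition (b) that $\wh{M}$ grows strictly slower than $t^{N/(N-2s)}$ along the dilation, so that the Kirchhoff term does not overwhelm the (correctly signed) $t^N$ term; this is exactly where hypothesis $(M3)$ is essential, and it is the one place where the general Kirchhoff function $M$ — as opposed to the case $M\equiv 1$ — genuinely enters the argument. A secondary technical point is producing the test function $w$ realizing the strict sign condition from $(f_3)$; this is routine once one works with the truncated nonlinearity $\hat f$, using a plateau-type profile at height $T$ and letting the plateau radius tend to infinity so that the bulk (negative) term beats the boundary-layer (positive) correction.
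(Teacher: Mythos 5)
Your proof is correct and takes essentially the same approach as the paper: both establish the mountain at the origin via $(M1)$, $(f_1)$--$(f_2)$, the estimate $\wh M(t)\geq m_0 t$ and trace/Sobolev embeddings, and both establish $L_{V_0}\to-\infty$ along a dilation of a plateau-type profile at height $T$ by combining $(f_3)$ (to make the $t^N$ term strictly negative) with $(M3)$ (to show $\wh M(t^{N-2s}\|w\|^2_{\x})=o(t^N)$). The paper's explicit choice $w_R$ equal to $T$ on $B_R^+(0,0)$ and linearly cut off to zero on $B_{R+1}^+(0,0)$ is exactly the "plateau profile" you describe, and your growth computation $\wh M(t)=o\bigl(t^{N/(N-2s)}\bigr)$ matches the paper's verification that $e^{-N\theta}\wh M(e^{(N-2s)\theta}\|w_R\|^2_{\x})\to 0$.
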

\begin{proof}
By $(M1)$, $(f_1)$, $(f_2)$ and $\h\subset L^{p+1}(\R^{N})$ we have
\begin{align*}
L_{V_{0}}(u)&\geq \frac{m_{0}}{2} \|u\|^{2}_{X^{s}(\R^{N+1}_{+})}+\frac{V_{0}}{2}|u(\cdot, 0)|^{2}_{2}-\e |u(\cdot, 0)|_{2}^{2}-C_{\e} |u(\cdot, 0)|_{p+1}^{p+1} \\
&\geq c_{1} \|u\|_{\X}^{2}-c_{2}  \|u\|_{\X}^{p+1}.
\end{align*} 
Hence, there exist $\rho, \delta>0$ such that $L_{V_{0}}(u)\geq \delta$ for $\|u\|_{\X}=\rho$.\\
Now, for all $R>0$ we define 
\begin{equation*}
w_{R}(x,y):=\left\{
\begin{array}{ll}
T &\mbox{ if } (x,y)\in B^{+}_{R}(0,0), \\
T\left(R+1-\sqrt{|x|^{2}+y^{2}} \right)  &\mbox{ if } (x,y)\in B^{+}_{R+1}(0, 0)\setminus B^{+}_{R}(0, 0), \\
0 &\mbox{ if } (x, y)\in \R^{N+1}_{+}\setminus B_{R+1}^{+}(0,0).
\end{array}
\right.
\end{equation*}
It is clear that $w_{R}\in \Xr$. Note that, by $(f_3)$, for $R>0$ large enough it holds
$$
\int_{\R^{N}} F(w_{R}(x,0))-\frac{V_{0}}{2}w^{2}_{R}(x,0)\, dx\geq 1.
$$
Now, fix such an $R>0$ and consider $w_{R, \theta}(x,y):=w_{R}(x/e^{\theta}, y/e^{\theta})$. Then,
\begin{align*}
L_{V_{0}}(w_{R, \theta})&=\frac{1}{2} \wh{M}(e^{(N-2s)\theta} \|w_{R}\|^{2}_{X^{s}(\R^{N+1}_{+})})-e^{N\theta} \int_{\R^{N}} F(w_{R}(x,0))-\frac{V_{0}}{2}w^{2}_{R}(x,0)\, dx\\
&\leq \frac{1}{2} \wh{M}(e^{(N-2s)\theta}  \|w_{R}\|^{2}_{X^{s}(\R^{N+1}_{+})})-e^{N\theta} \ri -\infty \mbox{ as } \theta\ri \infty
\end{align*}
because $(M3)$ yields 
$$
e^{-N\theta} \wh{M}(e^{(N-2s)\theta} \|w_{R}\|^{2}_{\x})\ri 0 \mbox{ as } \theta\ri \infty.
$$
\end{proof}

\noindent
In view of Lemma \ref{MPTL} we can define the minimax level
\begin{equation}\label{cv0}
c_{V_{0}}:=\inf_{\gamma\in \Gamma_{V_{0}}} \max_{t\in [0,1]} L_{V_{0}}(\gamma(t))
\end{equation}
and
\begin{equation}\label{gammav0}
\Gamma_{V_{0}}:=\{\gamma\in C([0, 1], \X): \gamma(0)=0, L_{V_{0}}(\gamma(1))<0\}.
\end{equation} 
Obviously, $c_{V_{0}}>0$. We can also note that 
\begin{equation}\label{cmp=cmpr}
c_{V_{0}}=c_{V_{0}, \rm{rad}},
\end{equation}
where
$$
c_{V_{0}, \rm{rad}}:=\inf_{\gamma\in \Gamma_{V_{0}, rad}} \max_{t\in [0, 1]} L_{V_{0}}(\gamma(t)),
$$
and
$$
\Gamma_{V_{0}, \rm{rad}}:=\{\gamma\in C([0, 1], \Xr): \gamma(0)=0, L_{V_{0}}(\gamma(1))<0\}.
$$
Indeed, $c_{V_{0}}\leq c_{V_{0}, \rm{rad}}$ by the definitions. 
For the opposite inequality, take $\gamma\in \Gamma_{V_{0}}$ and consider $\gamma_{\e}(t):=\rho_{\e}*\gamma(t)$, where $\rho_{\e}\in C^{\infty}_{c}(\R^{N+1}_{+})$ is a standard mollifier. Then, $\gamma_{\e}\in C([0, 1], \X)$, $\gamma_{\e}(0)=0$ and $\gamma_{\e}(t)\in C^{\infty}(\R^{N+1}_{+})\cap \X$ for all $t\in [0, 1]$. 
Since 
$$
\sup_{t\in [0, 1]} \|\gamma_{\e}(t)-\gamma(t)\|_{\X}\ri 0 \mbox{ as } \e\ri 0,
$$
we deduce that
$$
\max_{t\in [0, 1]} L_{V_{0}}(\gamma_{\e}(t))\ri \max_{t\in [0, 1]} L_{V_{0}}(\gamma(t)) \mbox{ as } \e\ri 0.
$$ 
Now, let $\phi_{\e}^{*}(t)$ be the symmetric decreasing rearrangement of $\gamma_{\e}(t)(\cdot, 0)\in H^{s}(\R^{N})$, and denote by $\gamma_{\e}^{*}(t)$ the solution of 
\begin{align*}
\left\{
\begin{array}{ll}
-\dive(y^{1-2s} \nabla \gamma_{\e}^{*}(t))=0 &\mbox{ in } \R^{N+1}_{+}, \\
\gamma_{\e}(t)(\cdot, 0)=\phi_{\e}^{*}(t) &\mbox{ in } \R^{N}. 
\end{array}
\right.
\end{align*}
Since $\gamma_{\e}^{*}(t)$ is the $s$-harmonic extension of $\phi_{\e}^{*}(t)$, and using the trace inequality and Theorem 9.2 in \cite{AL} we have
$$
\|\gamma_{\e}^{*}(t)\|_{\x}=[\phi_{\e}^{*}(t)]_{s}\leq [\gamma_{\e}(t)(\cdot, 0)]_{s}\leq \|\gamma_{\e}(t)\|_{\x}.
$$
On the other hand, for all $G:\R\ri \R$ continuous
$$
\int_{\R^{N}} G(\gamma_{\e}^{*}(t)(\cdot, 0))\, dx=\int_{\R^{N}} G(\phi_{\e}^{*}(t))\, dx=\int_{\R^{N}} G(\gamma_{\e}(t)(\cdot, 0))\, dx.
$$
Observing that $\wh{M}$ is strictly increasing (by $(M1)$), we obtain that $L_{V_{0}}(\gamma^{*}_{\e}(t))\leq L_{V_{0}}(\gamma_{\e}(t))$ for all $t\in [0, 1]$. Moreover, since $\gamma_{\e}(\cdot, 0)\in C^{\infty}(\R^{N})$, we have that $\gamma_{\e}(\cdot, 0)$ is co-area regular (see \cite{AL}) and using Theorem 9.2 in \cite{AL} we deduce that $\phi^{*}_{\e}\in C([0, 1], H^{s}_{\rm rad}(\R^{N}))$ and consequently $\gamma_{\e}^{*}\in C([0, 1], \Xr)$. In conclusion, $\gamma^{*}_{\e}\in \Gamma_{V_{0}, \rm{rad}}$ and \eqref{cmp=cmpr} holds true.

Now we prove the existence of a Palais-Smale sequence of $L_{V_{0}}$ with an extra property related to the Pohozaev identity; see \cite{FIJ, HIT, HL}.

\begin{prop}\label{prop3.4HLP}
There exists a sequence $(w_{n})\subset \Xr$ such that
\begin{align}\label{3.6HLP}
L_{V_{0}}(w_{n})\ri c_{V_{0}},\, L'_{V_{0}}(w_{n})\ri 0, \, P(w_{n})\ri 0.
\end{align}
\end{prop}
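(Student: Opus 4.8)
The plan is to construct the desired Palais--Smale--Pohozaev sequence by applying a version of the Mountain Pass Theorem on the augmented functional built from $L_{V_0}$ and a scaling variable, a device going back to Jeanjean and used in \cite{FIJ, HIT, HL}. Concretely, on the space $\Xr \times \R$ I would introduce
\begin{align*}
\widetilde{L}_{V_0}(u, \theta) := \frac{1}{2}\wh{M}\!\left(e^{(N-2s)\theta}\|u\|^{2}_{\x}\right) + \frac{e^{N\theta}}{2}\int_{\R^{N}} V_{0}\, u^{2}(x,0)\, dx - e^{N\theta}\int_{\R^{N}} F(u(x,0))\, dx,
\end{align*}
which is exactly $L_{V_0}$ evaluated at the dilation $u_\theta(x,y) := u(x/e^{\theta}, y/e^{\theta})$. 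First I would check that $\widetilde{L}_{V_0}$ is $C^1$ on $\Xr \times \R$ (from $(f_1)$--$(f_2)$ as for $L_{V_0}$) and that it has a mountain pass geometry: the geometry at the origin is inherited from Lemma \ref{MPTL} since $\widetilde{L}_{V_0}(u,0) = L_{V_0}(u)$ and a small ball in $u$ keeps $\theta$ near $0$; the existence of a point where $\widetilde{L}_{V_0} < 0$ follows from the computation already carried out in the proof of Lemma \ref{MPTL}, namely $L_{V_0}(w_{R,\theta}) \to -\infty$ using $(M3)$ and $(f_3)$.

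Next I would verify that the two mountain pass levels coincide, i.e. the minimax value $\widetilde{c}_{V_0}$ of $\widetilde{L}_{V_0}$ equals $c_{V_0, \rm{rad}} = c_{V_0}$ (the last equality being \eqref{cmp=cmpr}). The inequality $\widetilde{c}_{V_0} \leq c_{V_0}$ is immediate by embedding a path $\gamma(t) \in \Gamma_{V_0,\rm{rad}}$ as $(\gamma(t), 0)$. For the reverse, given a path $\widetilde{\gamma}(t) = (\gamma_1(t), \theta(t))$ in the augmented class, the path $t \mapsto (\gamma_1(t))_{\theta(t)}$ lies in $\Gamma_{V_0,\rm{rad}}$ and realizes the same values of $L_{V_0}$, because $\widetilde{L}_{V_0}(u,\theta) = L_{V_0}(u_\theta)$ identically; hence $\widetilde{c}_{V_0} \geq c_{V_0,\rm{rad}}$. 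Then I would apply the general minimax principle (Ekeland variational principle, in the form used in \cite{Jeanjean} and in \cite{FIJ, HL}) to obtain a sequence $(u_n, \theta_n) \in \Xr \times \R$ with $\widetilde{L}_{V_0}(u_n,\theta_n) \to c_{V_0}$ and $\widetilde{L}'_{V_0}(u_n,\theta_n) \to 0$ in the dual of $\Xr \times \R$.

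It then remains to translate this into \eqref{3.6HLP}. Set $w_n := (u_n)_{\theta_n} \in \Xr$. Testing $\partial_u \widetilde{L}_{V_0}(u_n,\theta_n)$ against arbitrary $\varphi \in \Xr$ and undoing the dilation shows $L'_{V_0}(w_n) \to 0$; that $L_{V_0}(w_n) = \widetilde{L}_{V_0}(u_n,\theta_n) \to c_{V_0}$ is built in; and the key point is that the partial derivative in $\theta$ is, up to the factor expressing the change of variables, precisely the Pohozaev functional: differentiating $\theta \mapsto L_{V_0}((u_n)_\theta)$ at the relevant $\theta$ one gets
\begin{align*}
\partial_\theta \widetilde{L}_{V_0}(u_n,\theta_n) = \frac{N-2s}{2} M(\|w_n\|^{2}_{\x})\|w_n\|^{2}_{\x} - N\int_{\R^{N}} F(w_n(x,0)) - \frac{V_0}{2} w_n^{2}(x,0)\, dx = P(w_n),
\end{align*}
so $|\partial_\theta \widetilde{L}_{V_0}(u_n,\theta_n)| \to 0$ gives $P(w_n) \to 0$. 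I expect the main technical obstacle to be the bookkeeping in the last step — correctly relating the $\theta$-derivative of the augmented functional to $P$, keeping track of the extension norm $\|\cdot\|_{\x}$ versus $\|\cdot\|_{\X}$ and of the trace terms under dilation — together with a routine but slightly delicate continuity/differentiability check for $\widetilde{L}_{V_0}$ in the $\theta$ variable near $\theta = 0$ so that the mountain pass geometry genuinely transfers. The convergence of the radial rearrangement machinery already recorded before \eqref{cmp=cmpr} lets us stay within $\Xr$ throughout, which is what makes the later compactness arguments work.
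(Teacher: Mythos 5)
Your proposal follows essentially the same route as the paper: augment $L_{V_0}$ by a scaling parameter, verify the mountain pass geometry and the coincidence of levels, invoke the quantitative minimax principle, and read off the Pohozaev functional from the $\theta$-derivative. The identification $\partial_\theta \widetilde{L}_{V_0}(u_n,\theta_n)=P((u_n)_{\theta_n})$ is exactly the paper's computation (formula \eqref{2.13HLP}), and the arguments for the geometry and the equality $\widetilde{c}_{V_0}=c_{V_0}$ match.

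There is, however, one genuinely nontrivial point you gloss over as "undoing the dilation". Writing $\psi=\varphi(\cdot/e^{\theta_n},\cdot/e^{\theta_n})$ with $\varphi(x,y)=\psi(e^{\theta_n}x,e^{\theta_n}y)$, one has $\|\varphi\|_{\X}^2=e^{-(N-2s)\theta_n}\|\psi\|_{\x}^2+e^{-N\theta_n}|\psi(\cdot,0)|_2^2$, so the bound $|\langle L'_{V_0}(w_n),\psi\rangle|\le\|\partial_u\widetilde{L}_{V_0}(u_n,\theta_n)\|\,\|\varphi\|_{\X}$ only yields $\|L'_{V_0}(w_n)\|_{(\Xr)'}\to 0$ if $\theta_n$ stays bounded (in the paper, $\theta_n\to 0$). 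The abstract Mountain Pass/Ekeland statement by itself gives no control on $\theta_n$. The paper obtains $\theta_n\to 0$ precisely from the quantitative form of the minimax principle (Theorem 2.8 in \cite{W}, item $(b)$): feeding in almost-optimal paths of the special form $\tilde\gamma_n(t)=(0,\gamma_n(t))$, the approximate critical point $(\theta_n,u_n)$ is within $2/n$ of this curve, forcing $\theta_n\to 0$. You should make this step explicit; without it the passage from $\widetilde{L}'_{V_0}(u_n,\theta_n)\to 0$ to $L'_{V_0}(w_n)\to 0$ does not follow.
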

\begin{proof}
Let $\widetilde{L}_{V_{0}}(\theta, u):=(L_{V_{0}}\circ \Phi)(\theta, u)$ for $(\theta, u)\in \R\times \Xr$, where $\Phi(\theta, u):=u(\frac{x}{e^{\theta}}, \frac{y}{e^{\theta}})$. 
Here $\R\times \Xr$ is equipped with the standard norm
$$
\|(\theta, u)\|_{\R\times \X}:=(|\theta|^{2}+\|u\|^{2}_{\X})^{\frac{1}{2}}.
$$
It follows from Lemma \ref{MPTL} that $\widetilde{L}_{V_{0}}$ has a mountain pass geometry, so we can define the mountain pass level of  $\widetilde{L}_{V_{0}}$
$$
\tilde{c}_{V_{0}}:=\inf_{\widetilde{\gamma}\in \widetilde{\Gamma}_{V_{0}}} \max_{t\in [0,1]} \widetilde{L}_{V_{0}}(\widetilde{\gamma}(t))
$$
where
$$
\widetilde{\Gamma}_{V_{0}}:=\{\widetilde{\gamma}\in C([0, 1], \R\times \Xr): \widetilde{\gamma}(0)=(0,0), \widetilde{L}_{V_{0}}(\widetilde{\gamma}(1))<0\}.
$$ 
It is easy to show that $\widetilde{c}_{V_{0}}=c_{V_{0}}$ (see \cite{Aade, HIT}). Then, by the general minimax principle (see Theorem 2.8 in \cite{W}), we deduce that there exists a sequence $((\theta_{n}, u_{n}))\subset \R\times \Xr$ such that, as $n\ri \infty$,
\begin{compactenum}[$(i)$]
\item $(L_{V_{0}}\circ \Phi)(\theta_{n}, u_{n})\ri c_{V_{0}}$,
\item $(L_{V_{0}}\circ \Phi)'(\theta_{n}, u_{n})\ri 0$ in $(\R\times \Xr)'$,
\item $\theta_{n}\ri 0$.
\end{compactenum}
Indeed, if we take $\e=\e_{n}=\frac{1}{n^{2}}$, $\delta=\delta_{n}=\frac{1}{n}$ in Theorem 2.8 in \cite{W}, $(i)$ and $(ii)$ follow by $(a)$ and $(c)$ in Theorem 2.8 in \cite{W}. In view of \eqref{cv0}, \eqref{gammav0}, for $\e=\e_{n}:=\frac{1}{n^2}$, we can find $\gamma_{n}\in \Gamma_{V_{0}}$ such that $\sup_{t\in [0, 1]}L_{V_{0}}(\gamma_{n}(t))\leq c_{V_{0}}+\frac{1}{n^{2}}$. Set $\tilde{\gamma}_{n}(t):=(0, \gamma_{n}(t))$. Then
$$
\sup_{t\in [0, 1]} (L_{V_{0}}\circ \Phi)(\tilde{\gamma}_{n}(t))=\sup_{t\in [0, 1]} L_{V_{0}}(\gamma_{n}(t))\leq c_{V_{0}}+\frac{1}{n^{2}}.
$$
From $(b)$ of Theorem 2.8 in \cite{W}, there exists $(\theta_{n}, u_{n})\in \R\times \X$ such that 
$$
{\rm dist}_{\R\times \X}((\theta_{n}, u_{n}), (0, \gamma_{n}(t)))\leq \frac{2}{n},
$$
that is $(iii)$ holds true. Here, we used the notation
$$
{\rm dist}_{\R\times \X}((\theta, u), A):=\inf_{(\tau, v)\in \R\times \X} (|\theta-\tau|^{2}+\|u-v\|^{2}_{\X})^{\frac{1}{2}},
$$
for $A\subset \R\times \h$.
Now, for $(h,w)\in  \R\times \X$, it holds
\begin{align}\label{2.13HLP}
\langle (L_{V_{0}}\circ \Phi)'(\theta_{n}, u_{n}), (h,w)\rangle =\langle L'_{V_{0}}(\Phi(\theta_{n}, u_{n})), \Phi'(\theta_{n}, w)\rangle+P(\Phi(\theta_{n}, u_{n}))h.
\end{align}
Then, choosing $h=1$ and $w=0$ in \eqref{2.13HLP}, we deduce that
$$
P(\Phi(\theta_{n}, u_{n}))\ri 0.
$$
On the other hand, for every $v\in \X$, taking $w(x,y)=v(e^{\theta_{n}}x, e^{\theta_{n}}y )$ and $h=0$ in \eqref{2.13HLP}, it follows from $(ii)$ and $(iii)$ that 
$$
\langle L_{V_{0}}'(\Phi(\theta_{n}, u_{n})), v\rangle=o(1)\|v(e^{\theta_{n}}x, e^{\theta_{n}}y )\|_{\X}=o(1)\|v\|_{\X}.
$$
Consequently, $w_{n}:=\Phi(\theta_{n}, u_{n})$ is the sequence that fulfills the desired properties.
\end{proof}

\begin{lem}\label{lem3.6HLP}
Every sequence $(w_{n})$ satisfying \eqref{3.6HLP} is bounded in $\X$.
\end{lem}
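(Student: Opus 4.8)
The plan is to combine the three pieces of information in \eqref{3.6HLP} to trap $\|w_n\|_{\x}$. First I would write out the identity $L_{V_0}(w_n)-\tfrac{1}{N}P(w_n)=o(1)$ explicitly. Using the definitions of $L_{V_0}$ and $P$, the terms involving $F(w_n(\cdot,0))$ cancel and one is left with an expression of the form
\begin{align*}
L_{V_0}(w_n)-\tfrac{1}{N}P(w_n)=\tfrac{1}{2}\wh{M}(\|w_n\|^{2}_{\x})-\tfrac{N-2s}{2N}M(\|w_n\|^{2}_{\x})\|w_n\|^{2}_{\x}+\tfrac{s}{N}V_0|w_n(\cdot,0)|_2^2.
\end{align*}
Since $L_{V_0}(w_n)\to c_{V_0}$ and $P(w_n)\to 0$, the left-hand side is bounded; and $(M2)$ says exactly that $\wh{M}(t)-(1-\tfrac{2s}{N})M(t)t\to\infty$, i.e. $\tfrac12\wh M(t)-\tfrac{N-2s}{2N}M(t)t\to\infty$ as $t\to\infty$. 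Because the $V_0$-term is nonnegative, it follows that $\|w_n\|^{2}_{\x}=[\text{Tr}\,w_n]_s^2/\kappa_s$ (recall $w_n$ is $s$-harmonic up to the constant, so $\|w_n\|_{\x}^2$ plays the role of the seminorm) cannot go to infinity: otherwise the right-hand side would diverge to $+\infty$, contradicting boundedness of the left-hand side. Hence $(\|w_n\|_{\x})$ is bounded.

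Next I would control the remaining part of the $\x$-norm, namely $|w_n(\cdot,0)|_2^2$, which is not directly included in $\|\cdot\|_{\x}$ but is part of $\|\cdot\|_{\X}$. Here I would use the Pohozaev identity $P(w_n)\to 0$ again, rewritten as
\begin{align*}
\tfrac{s}{N}V_0|w_n(\cdot,0)|_2^2=\tfrac{N-2s}{2N}M(\|w_n\|^{2}_{\x})\|w_n\|^{2}_{\x}-\int_{\R^{N}}F(w_n(x,0))\,dx+o(1),
\end{align*}
together with the growth bound on $F$ coming from $(f_1)$--$(f_2)$ (so $|F(t)|\le \e t^2+C_\e|t|^{p+1}$), the trace/Sobolev inequalities of Section 2, and the already-established boundedness of $\|w_n\|_{\x}$. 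The one genuinely awkward point is that the bound on $\int F(w_n(\cdot,0))$ involves $|w_n(\cdot,0)|_2^2$ and $|w_n(\cdot,0)|_{p+1}^{p+1}$, and the latter is controlled by $[w_n(\cdot,0)]_s^2$ only through the fractional Gagliardo--Nirenberg inequality, which also costs a power of $|w_n(\cdot,0)|_2^2$. So one gets an inequality of the shape $|w_n(\cdot,0)|_2^2\le C+\tfrac12 V_0|w_n(\cdot,0)|_2^2 \cdot o(1)+ C|w_n(\cdot,0)|_2^{\theta}$ with $\theta<2$, from which boundedness of $|w_n(\cdot,0)|_2$ follows by a standard absorption/Young's inequality argument. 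Combining the two bounds gives $\|w_n\|_{\X}$ bounded.

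The main obstacle I anticipate is the second step rather than the first: the first step is an essentially algebraic consequence of $(M2)$, but in the second step one must be careful that the subcritical exponent $p+1<\2$ really does make the $L^{p+1}$ term subquadratic-in-norm after invoking the Gagliardo--Nirenberg interpolation, and that the coefficient of $|w_n(\cdot,0)|_2^2$ produced by the $\e$-term can be made $<V_0 s/N$ by choosing $\e$ small. An alternative, perhaps cleaner, route for the second step is to test the equation with $w_n$ itself: $\langle L'_{V_0}(w_n),w_n\rangle=o(1)\|w_n\|_{\X}$ gives $M(\|w_n\|_{\x}^2)\|w_n\|_{\x}^2+V_0|w_n(\cdot,0)|_2^2-\int f(w_n(\cdot,0))w_n(\cdot,0)\,dx=o(1)\|w_n\|_{\X}$; using $(M1)$, the already-bounded $\|w_n\|_{\x}$, and the subcritical bound on $f(t)t$, one isolates $V_0|w_n(\cdot,0)|_2^2$ and absorbs as before. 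I would present whichever of these two is shorter, most likely the test-function version.
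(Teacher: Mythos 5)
Your two-step scheme (bound $\|w_n\|_{\x}$ from $L_{V_0}-\tfrac1N P$ and $(M2)$, then bound $|w_n(\cdot,0)|_2$ from the Pohozaev relation $P(w_n)\to 0$) is exactly the route the paper takes. There is, however, a computational slip in Step 1: in $L_{V_0}(w_n)-\tfrac1N P(w_n)$ not only do the $F$-terms cancel but the $V_0$-terms cancel as well, since $L_{V_0}$ carries $+\tfrac{V_0}{2}|w_n(\cdot,0)|_2^2$ and $\tfrac1N P(w_n)$ carries exactly the same $+\tfrac{V_0}{2}|w_n(\cdot,0)|_2^2$; there is no residual $\tfrac{s}{N}V_0|w_n(\cdot,0)|_2^2$. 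The correct expression is just $\tfrac12\wh M(\|w_n\|_{\x}^2)-\tfrac{N-2s}{2N}M(\|w_n\|_{\x}^2)\|w_n\|_{\x}^2$, and $(M2)$ still applies directly, so the conclusion of Step 1 survives; but the slip matters in that it might have led you to believe the identity already controls $|w_n(\cdot,0)|_2$, which it does not, and the second step is genuinely needed.

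For Step 2, the paper sidesteps the Gagliardo--Nirenberg/absorption difficulty you flag by using the bound $|F(t)|\le \delta t^2 + C_\delta|t|^{\2}$ instead of $\delta t^2+C_\delta|t|^{p+1}$: since $p<\2-1$, such a bound is valid, and the gain is that $|w_n(\cdot,0)|_{\2}$ is controlled by $\|w_n\|_{\x}$ alone (trace Sobolev), which Step 1 has already bounded. Plugging this into $P(w_n)=o_n(1)$, choosing $\delta$ small so the $\delta|w_n(\cdot,0)|_2^2$ term is absorbed on the left, and dropping the nonnegative Kirchhoff term via $(M1)$, immediately bounds $|w_n(\cdot,0)|_2$ with no interpolation. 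Your alternative of testing with $w_n$ is equally viable and leads to the same absorption estimate, but the Pohozaev version used in the paper is already short.
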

\begin{proof}
Using \eqref{3.6HLP} we see that
\begin{align*}
c_{V_{0}}+o_{n}(1)&=L_{V_{0}}(w_{n})-\frac{1}{N}P(w_{n})\\
&=\frac{1}{2} \wh{M}\left(\|w_{n}\|^{2}_{\x}\right) - \left(\frac{N-2s}{2N} \right) M\left( \|w_{n}\|^{2}_{\x} \right) \|w_{n}\|^{2}_{\x}.
\end{align*}
From $(M2)$ we deduce that $(\|w_{n}\|_{\x})$ is bounded in $\R$.
On the other hand, $P(w_{n})=o_{n}(1)$ and $(f_1)$-$(f_2)$ yield 
\begin{align*}
\frac{N-2s}{2}M\left( \|w_{n}\|^{2}_{\x} \right) \|w_{n}\|^{2}_{\x}+N\frac{V_{0}}{2}|w_{n}(\cdot, 0)|_{2}^{2}&=N\int_{\R^{N}}F(w_{n}(x,0))\, dx+o_{n}(1)\\
&\leq N\delta |w_{n}(\cdot, 0)|_{2}^{2}+NC_{\delta}|w_{n}(\cdot, 0)|_{\2}^{\2}+o_{n}(1).
\end{align*}
Choosing $\delta>0$ sufficiently small and using $(M1)$ and the boundedness of $(|w_{n}(\cdot, 0)|_{\2})$, we can infer that $(|w_{n}(\cdot, 0)|_{2})$ is bounded in $\R$. In conclusion, $(w_{n})$ is bounded in $\X$.
\end{proof}

\begin{lem}\label{lemmaB}
There exist a sequence $(x_{n})\subset \R^{N}$ and constants $R>0$, $\beta>0$ such that
$$
\int_{\Gamma_{R}^{0}(x_{n})} w^{2}_{n}(x, 0)\, dx\geq \beta,
$$
where $(w_{n})$ is the sequence given in Proposition \ref{prop3.4HLP}.
\end{lem}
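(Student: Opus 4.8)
This is the standard "non-vanishing" lemma in Lions' concentration–compactness framework, adapted to the extension setting. The plan is to argue by contradiction: suppose that for every $R>0$,
\[
\sup_{x\in \R^{N}} \int_{\Gamma_{R}^{0}(x)} w_{n}^{2}(x,0)\, dx \ri 0 \quad \text{as } n\ri \infty.
\]
Since $(w_{n})$ is bounded in $\X$ by Lemma \ref{lem3.6HLP}, its trace $(w_{n}(\cdot,0))$ is bounded in $H^{s}(\R^{N})$ (via the trace inequality $\sqrt{\kappa_{s}}[w_{n}(\cdot,0)]_{s}\leq \|w_{n}\|_{\x}$ together with the boundedness of $|w_{n}(\cdot,0)|_{2}$ established inside the proof of Lemma \ref{lem3.6HLP}). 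Then I would invoke the Lions-type vanishing lemma for $H^{s}(\R^{N})$: if a bounded sequence in $H^{s}(\R^{N})$ has the above vanishing property, then $w_{n}(\cdot,0)\ri 0$ strongly in $L^{q}(\R^{N})$ for every $q\in(2,\2)$.

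**Deriving the contradiction.** With $w_{n}(\cdot,0)\ri 0$ in $L^{q}(\R^{N})$ for $q\in(2,\2)$, I would test $L'_{V_{0}}(w_{n})\ri 0$ against $w_{n}$ itself:
\[
\langle L'_{V_{0}}(w_{n}), w_{n}\rangle = M(\|w_{n}\|^{2}_{\x})\|w_{n}\|^{2}_{\x} + V_{0}|w_{n}(\cdot,0)|_{2}^{2} - \int_{\R^{N}} f(w_{n}(x,0))w_{n}(x,0)\, dx = o_{n}(1).
\]
Using $(f_1)$–$(f_2)$ — recall that after the truncation $f$ has subcritical growth, so $|f(t)t|\leq \e t^{2}+C_{\e}|t|^{q}$ for a suitable $q\in(2,\2)$ — together with the vanishing of the $L^{q}$-norm and the boundedness of $|w_{n}(\cdot,0)|_{2}$, one gets $\int_{\R^{N}} f(w_{n})w_{n}\, dx = \e\,O(1) + o_{n}(1)$, hence it tends to $0$ (let $\e\ri 0$ after taking $n\ri\infty$). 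Therefore $M(\|w_{n}\|^{2}_{\x})\|w_{n}\|^{2}_{\x} + V_{0}|w_{n}(\cdot,0)|_{2}^{2}\ri 0$, and since $M(t)\geq m_{0}>0$ by $(M1)$, this forces $\|w_{n}\|_{\x}\ri 0$ and $|w_{n}(\cdot,0)|_{2}\ri 0$, i.e. $\|w_{n}\|_{\X}\ri 0$. But then, by continuity of $L_{V_{0}}$ and $L_{V_{0}}(0)=0$, we get $L_{V_{0}}(w_{n})\ri 0$, contradicting $L_{V_{0}}(w_{n})\ri c_{V_{0}}>0$. Hence the vanishing assumption is false, and there exist $R,\beta>0$ and $(x_{n})\subset\R^{N}$ with $\int_{\Gamma_{R}^{0}(x_{n})} w_{n}^{2}(x,0)\, dx\geq\beta$.

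**Main obstacle.** The only genuinely delicate point is justifying the Lions vanishing lemma in the fractional trace setting — i.e. that a bounded sequence in $H^{s}(\R^{N})$ with uniformly vanishing local $L^{2}$-mass on balls of fixed radius converges to zero in $L^{q}$ for all $q\in(2,\2)$; this is classical (see \cite{DPV}, or it can be reduced to the standard statement by interpolation and a covering argument) and I would just cite it. Everything else is routine bookkeeping with $(M1)$ and the subcritical growth of the truncated $f$. I would also remark that the fact that the whole construction lives in $\Xr$ is irrelevant here, since no compactness of radial embeddings is used — only the translation-invariant vanishing dichotomy.
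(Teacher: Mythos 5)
Your proof is correct and follows essentially the same route as the paper's: assume vanishing, invoke a fractional Lions-type lemma (the paper cites Lemma 3.3 in \cite{HZ}) to get $w_{n}(\cdot,0)\ri 0$ in $L^{q}(\R^{N})$ for $q\in(2,\2)$, then test $L'_{V_{0}}(w_{n})\ri 0$ against $w_{n}$ and use $(f_1)$--$(f_2)$ and $(M1)$ to force $\|w_{n}\|_{\X}\ri 0$, contradicting $L_{V_{0}}(w_{n})\ri c_{V_{0}}>0$. Your additional remarks (that the truncated $f$ has subcritical growth, and that radiality is not used here) are accurate but not needed.
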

\begin{proof}
Assume by contradiction that the thesis is not true. Then, by the vanishing Lions-type lemma (see Lemma 3.3 in \cite{HZ}), we deduce that 
\begin{align}\label{lions}
w_{n}(\cdot, 0)\ri 0 \mbox{ in } L^{q}(\R^{N}) \quad  \forall q\in (2, \2).
\end{align}
Consequently, by $(f_1)$-$(f_2)$, we have
$$
\int_{\R^{N}} f(w_{n}(x,0))w_{n}(x,0)\, dx=o_{n}(1).
$$
Recalling that $\langle L'_{V_{0}}(w_{n}), w_{n}\rangle=o_{n}(1)$, we get
$$
M(\|w_{n}\|^{2}_{\x}) \|w_{n}\|^{2}_{\x}+V_{0}|w_{n}(\cdot, 0)|_{2}^{2}=o_{n}(1)
$$
and using $(M1)$ we obtain that 
$$
\|w_{n}\|_{\X}\ri 0.
$$
Therefore, $L_{V_{0}}(w_{n})\ri 0$ and this leads to a contradiction because $c_{V_{0}}>0$.
\end{proof}

\noindent
Now we define
$$
\mathcal{T}_{V_{0}}:=\{u\in \X\setminus\{0\}: L'_{V_{0}}(u)=0, \max_{\R^{N}} u(\cdot,0)=u(0,0) \},
$$
$$
b_{V_{0}}:=\inf_{u\in \mathcal{T}_{V_{0}}} L_{V_{0}}(u), 
$$
and
$$
\mathcal{S}_{V_{0}}:=\{u\in \mathcal{T}_{V_{0}}: L_{V_{0}}(u)=b_{V_{0}}\}.
$$
\begin{lem}\label{Final}
Assume $(M1)$-$(M5)$. Then there exists $u\in \mathcal{S}_{V_{0}}$.
\end{lem}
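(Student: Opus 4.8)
The plan is to show that the infimum $b_{V_0}$ is attained, by extracting a minimizer from the Pohozaev--Palais--Smale sequence of Proposition~\ref{prop3.4HLP}, and then to prove the two bounds $b_{V_0}\le c_{V_0}$ and $b_{V_0}\ge c_{V_0}$. I would start from $(w_n)\subset\Xr$ as in Proposition~\ref{prop3.4HLP}; by Lemma~\ref{lem3.6HLP} it is bounded in $\X$, so up to a subsequence $w_n\rightharpoonup w$ in $\X$ (hence in $\x$), $w_n(\cdot,0)\to w(\cdot,0)$ in $L^q(\R^N)$ for all $q\in(2,\2)$ by the compact radial embedding of $H^s_{\rm rad}(\R^N)$, $w_n(\cdot,0)\to w(\cdot,0)$ a.e.\ and in $L^2_{loc}(\R^N)$, $\|w_n\|^2_{\x}\to A\ge0$, and $M(\|w_n\|^2_{\x})\to\alpha_0:=M(A)\ge m_0>0$ by $(M1)$. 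Using $(f_1)$--$(f_2)$ and these convergences one gets $\int_{\R^N}f(w_n(\cdot,0))w_n(\cdot,0)\,dx\to\int_{\R^N}f(w(\cdot,0))w(\cdot,0)\,dx$ and $\int_{\R^N}F(w_n(\cdot,0))\,dx\to\int_{\R^N}F(w(\cdot,0))\,dx$; were $w=0$, then $\langle L'_{V_0}(w_n),w_n\rangle=o_n(1)$ and $(M1)$ would force $\|w_n\|_{\X}\to0$, hence $L_{V_0}(w_n)\to0$, contradicting $c_{V_0}>0$, so $w\neq0$. Passing to the limit in $\langle L'_{V_0}(w_n),\varphi\rangle=o_n(1)$ shows that $w$ solves the problem obtained from \eqref{LP} by replacing the coefficient $M(\|w\|^2_{\x})$ with the constant $\alpha_0$; arguing as in Lemma~\ref{lem2.11FIJ} this gives $\tfrac{N-2s}{2}\alpha_0\|w\|^2_{\x}=N G(w)$ with $G(w):=\int_{\R^N}\big(F(w(\cdot,0))-\tfrac{V_0}{2}w^2(\cdot,0)\big)\,dx>0$, while weak lower semicontinuity yields $t_0:=\|w\|^2_{\x}\le A$ and thus $M(t_0)\le\alpha_0$ by $(M4)$.

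The crux, and the step I expect to be the main obstacle (as flagged in the Introduction), is to improve this to $\alpha_0=M(t_0)$, i.e.\ to show that $w$ is a genuine critical point of $L_{V_0}$. For this I would rescale, setting $w^{\lambda}(x,y):=w(x/\lambda,y/\lambda)$ for $\lambda>0$ and $w^{0}:=0$: this is a continuous path in $\X$ with $L_{V_0}(w^{0})=0$ and, by $(M3)$ and $G(w)>0$, $L_{V_0}(w^{\lambda})\to-\infty$ as $\lambda\to\infty$, so a suitable reparametrisation lies in $\Gamma_{V_0}$ and $c_{V_0}\le\max_{\lambda\ge0}L_{V_0}(w^{\lambda})$. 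An elementary study of $\lambda\mapsto L_{V_0}(w^{\lambda})=\tfrac12\wh{M}(\lambda^{N-2s}t_0)-\lambda^{N}G(w)$, using $(M5)$ (which makes this map unimodal) and $(M3)$, shows that the maximum is attained at some $\lambda_{*}\in(0,1]$ — the bound $\lambda_{*}\le1$ coming from $(M4)$ and the Pohozaev relation — with value $\Psi(\sigma_{*})$, where $\sigma_{*}:=\lambda_{*}^{N-2s}t_0\le t_0$ and $\Psi(t):=\tfrac12\wh{M}(t)-\tfrac{N-2s}{2N}M(t)t$. Writing $M(t)t=t^{\frac{N}{N-2s}}\big(M(t)/t^{\frac{2s}{N-2s}}\big)$, $(M5)$ implies that $\Psi$ is nondecreasing, whereas $c_{V_0}=\lim_n\big(L_{V_0}(w_n)-\tfrac1N P(w_n)\big)=\tfrac12\wh{M}(A)-\tfrac{N-2s}{2N}\alpha_0 A=\Psi(A)$. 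Hence
$$
c_{V_0}\le L_{V_0}(w^{\lambda_{*}})=\Psi(\sigma_{*})\le\Psi(t_0)\le\Psi(A)=c_{V_0},
$$
so $\Psi$ is constant on $[\sigma_{*},A]$; since $M$ is continuous, $(M5)$ then forces $t\mapsto M(t)/t^{\frac{2s}{N-2s}}$ to be constant on $[\sigma_{*},A]$, and inserting this into the identity characterising $\lambda_{*}$ gives $A=t_0$, whence $\alpha_0=M(t_0)$ and $L'_{V_0}(w)=0$. In particular $L_{V_0}(w)=L_{V_0}(w)-\tfrac1N P(w)=\Psi(t_0)=c_{V_0}$ by Lemma~\ref{lem2.11FIJ}.

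Finally, by the regularity theory (using the local H\"older hypothesis on $f$ when $s\in(0,\tfrac12]$) one has $w(\cdot,0)\in C(\R^N)$; testing with the negative part $w^{-}$ and using $f\equiv0$ on $(-\infty,0]$ gives $w\ge0$, the strong maximum principle yields $w(\cdot,0)>0$ in $\R^N$, and $w(\cdot,0)(x)\to0$ as $|x|\to\infty$, so $w(\cdot,0)$ attains its maximum at some $x_0\in\R^N$. Then $\bar w(x,y):=w(x+x_0,y)$ belongs to $\mathcal{T}_{V_0}$ and $L_{V_0}(\bar w)=L_{V_0}(w)=c_{V_0}$, so $b_{V_0}\le c_{V_0}$. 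For the reverse inequality, any $u\in\mathcal{T}_{V_0}$ is a solution of \eqref{LP} and hence satisfies $P(u)=0$ by Lemma~\ref{lem2.11FIJ}; then the path $\lambda\mapsto u(x/\lambda,y/\lambda)$ attains its maximum at $\lambda=1$ and, suitably reparametrised, belongs to $\Gamma_{V_0}$, so $c_{V_0}\le\max_{\lambda\ge0}L_{V_0}(u(\cdot/\lambda,\cdot/\lambda))=L_{V_0}(u)$, i.e.\ $b_{V_0}\ge c_{V_0}$. Combining, $b_{V_0}=c_{V_0}$ and $L_{V_0}(\bar w)=b_{V_0}$, that is $\bar w\in\mathcal{S}_{V_0}$.
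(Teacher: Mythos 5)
Your argument is correct and reaches the same conclusion, but it closes the pivotal gap---that $\alpha_0:=\lim_n M(\|w_n\|^2_{\x})$ actually equals $M(\|w\|^2_{\x})$, so the weak limit is a genuine critical point of $L_{V_0}$---by a genuinely different mechanism. The paper translates the sequence by the concentration points $x_n$ from Lemma~\ref{lemmaB}, gets a nonzero weak limit $\tilde w$, invokes Lemma~2.4 of \cite{CW} to pass to the limit in $\int_{\R^N} f(w_n(\cdot,0))w_n(\cdot,0)\,dx$, and then sandwiches $\alpha_0\|\tilde w\|^2_{\x}+V_0|\tilde w(\cdot,0)|^2_2$ between the $\liminf$ and $\limsup$ of the corresponding quantities for $w_n$, all of which collapse onto $\int f(\tilde w)\tilde w$; this forces $\tilde w_n\to\tilde w$ strongly in $\X$, and $\alpha_0=M(\|\tilde w\|^2_{\x})$ drops out. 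You instead rule out $w=0$ directly via the compact radial embedding (bypassing Lemma~\ref{lemmaB}), invoke the Pohozaev identity for the limit equation with frozen coefficient $\alpha_0$, and run the scaling path $\lambda\mapsto w(\cdot/\lambda,\cdot/\lambda)$ to squeeze $c_{V_0}$ between evaluations of $t\mapsto\tfrac12\bigl[\wh M(t)-(1-\tfrac{2s}{N})M(t)t\bigr]$ at $\sigma_*\le t_0\le A$; by Lemma~\ref{lem2.17FIJ} this map is nondecreasing, the squeeze makes it flat on $[\sigma_*,A]$, the equality case in \eqref{2.21FIJ} then forces $M(t)/t^{2s/(N-2s)}$ to be constant there, and plugging this into the stationarity relation $M(\sigma_*)=\lambda_*^{2s}\alpha_0$ gives $A=t_0$. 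In effect you import, one step early, the rigidity argument the paper defers to Proposition~\ref{COMPs} for the compactness of $\mathcal S_{V_0}$: it avoids the Chang--Wang compactness lemma and obtains strong convergence only \emph{a posteriori}, at the cost of leaning harder on $(M5)$/$(M6)$ and on the Pohozaev characterisation of $c_{V_0}$. The rest---the inequality $b_{V_0}\ge c_{V_0}$ via the scaling path for an arbitrary solution, and the regularity/positivity/translation to the maximum point---matches the paper.
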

\begin{proof}
Let $(w_{n})$ be the sequence given by Lemma \ref{prop3.4HLP}. Set $\tilde{w}_{n}(x,y):=w_{n}(x+x_{n},y)$ where $(x_{n})$ is given in Lemma \ref{lemmaB}.  By Lemma \ref{lem3.6HLP}, we know that $(w_{n})$ is bounded in $\Xr$, that is $\|w_{n}\|_{\X}\leq C$ for all $n\in \mathbb{N}$. Hence $\tilde{w}_{n}\rightharpoonup \tilde{w}$ in $X^{1,s}_{rad}(\R^{N+1}_{+})$ and $\tilde{w}_{n}(\cdot, 0)\ri \tilde{w}(\cdot, 0)$ in $L^{q}(\R^{N})$ for any $q\in (2, 2^{*}_{s})$, for some $\tilde{w}\in \Xr\setminus\{0\}$.
Then, $\tilde{w}$ is a weak solution to
\begin{align}\label{AMP}
\left\{
\begin{array}{ll}
-\dive(y^{1-2s} \nabla \tilde{w})=0 &\mbox{ in } \R^{N+1}_{+}, \\
\frac{1}{\alpha_{0}} \frac{\partial \tilde{w}}{\partial \nu^{1-2s}}=-V_{0} \tilde{w}(\cdot, 0)+f(\tilde{w}(\cdot, 0)) &\mbox{ in } \R^{N}, 
\end{array}
\right.
\end{align}
where 
$$
\alpha_{0}:=\lim_{n\ri \infty} M(\|\tilde{w}_{n}\|^{2}_{\x})=\lim_{n\ri \infty} M(\|w_{n}\|^{2}_{\x})\leq M(C^{2})<\infty.
$$
Note that the last inequality is due to $(M4)$.\\
Clearly, by Fatou's Lemma, we have
\begin{align}\label{m0A}
0<m_{0}\leq M(\|\tilde{w}\|^{2}_{\x})\leq \alpha_{0}.
\end{align}
In what follows, we prove that 
$$
\alpha_{0}=M(\|\tilde{w}\|^{2}_{\x}),
$$ 
and thus $\tilde{w}$ is a weak solution to \eqref{P}.
Since $\tilde{w}$ solves \eqref{AMP} and using the regularity assumptions on $f$, we deduce that $\tilde{w}$ satisfies the following Pohozaev identity \cite{Aade, BKS, CW}:
\begin{align}\label{POHw}
\frac{N-2s}{2} \alpha_{0} \|\tilde{w}\|^{2}_{\x}-N\int_{\R^{N}} \left(F(\tilde{w}(x,0))-\frac{V_{0}}{2} \tilde{w}^{2}(x,0)\right)\, dx=0.
\end{align}
Now, we apply Lemma 2.4 in \cite{CW} with $X=H^{s}_{\rm rad}(\R^{N})$, $P(t)=f(t)t$, $p_{1}=2$ and $p_{2}=2^{*}_{s}$ to see that
\begin{align*}
\alpha_{0}\|\tilde{w}\|^{2}_{\x}+V_{0}|\tilde{w}(\cdot, 0)|^{2}_{2}&\leq\liminf_{n\ri \infty} [M(\|\tilde{w}_{n}\|^{2}_{\x})\|\tilde{w}_{n}\|_{\x}^{s}+V_{0} |\tilde{w}_{n}(\cdot, 0)|_{2}^{2}] \\
&\leq \limsup_{n\ri \infty} [M(\|\tilde{w}_{n}\|^{2}_{\x})\|\tilde{w}_{n}\|_{\x}^{s}+V_{0} |\tilde{w}_{n}(\cdot, 0)|_{2}^{2}]  \\
&=\limsup_{n\ri \infty} [M(\|w_{n}\|^{2}_{\x})\|w_{n}\|^{2}_{\x}+V_{0} |w_{n}(\cdot, 0)|_{2}^{2}] \\
&=\limsup_{n\ri \infty} \int_{\R^{N}} f(w_{n}(x,0))w_{n}(x,0)\, dx\\
&=\lim_{n\ri \infty} \int_{\R^{N}} f(\tilde{w}_{n}(x,0))\tilde{w}_{n}(x,0)\, dx\\
&=\int_{\R^{N}} f(\tilde{w}(x,0)) \tilde{w}(x,0)\, dx \\
&=\alpha_{0}\|\tilde{w}\|^{2}_{\x}+V_{0}|\tilde{w}(\cdot, 0)|^{2}_{2}
\end{align*}
which implies that $\|\tilde{w}_{n}\|_{\X}\ri \|\tilde{w}\|_{\X}$ and thus $\tilde{w}_{n}\ri \tilde{w}$ in $\X$. Hence, $\alpha_{0}=M(\|\tilde{w}\|^{2}_{\x})$.
Therefore, by $L_{V_{0}}(w_{n})=L_{V_{0}}(\tilde{w}_{n})\ri c_{V_{0}}$ and $L'_{V_{0}}(w_{n})=L'_{V_{0}}(\tilde{w}_{n})\ri 0$, we have that $L_{V_{0}}(\tilde{w})=c_{V_{0}}$ and $L'_{V_{0}}(\tilde{w})=0$. Since $\tilde{w}\neq 0$, we deduce that $c_{V_{0}}\geq b_{V_{0}}$. 

Now, let $w\in \X\setminus\{0\}$ be any solution to \eqref{LP}.
Define
\begin{equation*}
\gamma(t):=\left\{
\begin{array}{ll}
w(\frac{x}{t}, \frac{y}{t}) &\mbox{ for } t>0, \\
0  &\mbox{ for } t=0.
\end{array}
\right.
\end{equation*}
Using the fact that $w$ satisfies the Pohozaev identity (see Lemma \ref{lem2.11FIJ}), we get
\begin{align*}
L_{V_{0}}(\gamma(t))&=\frac{1}{2} \wh{M}\left(t^{N-2s} \|w\|^{2}_{\x}\right) -t^{N} \left(\frac{N-2s}{2N} \right) M\left(\|w\|^{2}_{\x}\right) \|w\|^{2}_{\x},
\end{align*}
and differentiating with respect to $t$ we obtain
\begin{align*}
\frac{d}{dt} L_{V_{0}}(\gamma(t))&=\frac{N-2s}{2}\|w\|^{2}_{\x} t^{N-2s-1}\left[M(t^{N-2s}\|w\|^{2}_{\x})-t^{2s} M(\|w\|^{2}_{\x})\right]. 
\end{align*}
By $(M5)$ and using a change of variable, we observe that $t\mapsto M(t^{N-2s} \|w\|^{2}_{\x})/t^{2s}$ is nonincreasing in $(0, \infty)$, so we have
$$
\frac{d}{dt} L_{V_{0}}(\gamma(t))> 0 \quad \forall t\in (0, 1), \, \frac{d}{dt} L_{V_{0}}(\gamma(t))< 0 \quad \forall t\in (1, \infty),
$$
which implies that 
$$
\max_{t\geq 0} L_{V_{0}}(\gamma(t))=L_{V_{0}}(\gamma(1))=L_{V_{0}}(w).
$$
Moreover, noting that $(M1)$ and $(M3)$ yield
$$
\lim_{t\ri \infty} \frac{\wh{M}(t^{N-2s})}{t^{N}}=\left[\frac{\infty}{\infty}\right]=\lim_{t\ri \infty} \frac{M(t^{N-2s})}{(t^{N-2s})^{\frac{2s}{N-2s}}} \frac{N-2s}{N}=0,
$$
we deduce
\begin{align*}
L_{V_{0}}(\gamma(t))=\frac{t^{N}}{2}\left[\frac{1}{t^{N}}\wh{M}\left(t^{N-2s} \|w\|^{2}_{\x}\right)- \left(\frac{N-2s}{2N} \right) M\left(\|w\|^{2}_{\x}\right) \|w\|^{2}_{\x} \right]\ri -\infty,
\end{align*}
as $t\ri \infty$. Then there exists $\tau>0$ sufficiently large such that $L_{V_{0}}(\gamma(\tau))<0$. After a suitable scale change in $t$, we obtain that $\gamma\in \Gamma_{V_{0}}$.
By the definition of $c_{V_{0}}$, we see that $L_{V_{0}}(w)\geq c_{V_{0}}$. Since $w$ is arbitrary, we have that $b_{V_{0}}\geq c_{V_{0}}$ and this implies that $b_{V_{0}}=c_{V_{0}}$.

Choosing $u^{-}=\min\{u, 0\}$ as test function in the weak formulation of \eqref{LP} we can deduce that $u\geq 0$ in $\R^{N}$. By $(f_1)$-$(f_2)$ and using a Moser iteration argument (see \cite{Aade, CW}), we obtain that $u\in L^{\infty}(\R^{N})$. By the growth assumptions on $f$ and in view of the H\"older regularity results in \cite{Silvestre}, we deduce that $u\in C^{0, \beta}(\R^{N})$ (see \cite{Aade, BKS, CW}). From the Harnack inequality \cite{CSire, JLX} we conclude that $u>0$ in $\R^{N}$.
\end{proof}

\begin{remark}\label{REMARK}
For $m>0$, we use the notation 
$$
L_{m}(u)=\frac{1}{2}\wh{M}(\|u\|^{2}_{\x})+\frac{m}{2}|u(\cdot, 0)|^{2}_{2}-\int_{\R^{N}} F(u(x, 0))\, dx
$$
and denote by $c_{m}$ the corresponding mountain pass level. It is standard to verify that if $m_{1}>m_{2}$ then $c_{m_1}>c_{m_2}$.
\end{remark}

\noindent
In what follows, we aim to show that $\mathcal{S}_{V_{0}}$ is compact in $\X$. To do this we begin by giving some auxiliary results.
Let us consider the following fractional elliptic problem:
\begin{align}\label{PSchrodinger}
\left\{
\begin{array}{ll}
-\dive(y^{1-2s} \nabla w)=0 &\mbox{ in } \R^{N+1}_{+}, \\
\frac{\partial w}{\partial \nu^{1-2s}}=-V_{0} w(\cdot, 0)+f(w(\cdot, 0)) &\mbox{ in } \R^{N}. 
\end{array}
\right.
\end{align}
If $w$ is a solution to \eqref{PSchrodinger}, then it satisfies the Pohozaev identity (see \cite{Adie, Aade, BKS, CW, ZDOS})
\begin{equation}\label{2.20FIJ}
\frac{N-2s}{2}\|w\|^{2}_{\x}-N\int_{\R^{N}} F(u(x,0))-\frac{V_{0}}{2} u^{2}(x,0)\, dx=0.
\end{equation}
Let 
$$
\E_{V_{0}}(u)=\frac{1}{2} \|u\|^{2}_{\x}+\frac{V_{0}}{2}|u(\cdot, 0)|^{2}_{2}-\int_{\R^{N}} F(u(x,0))\, dx,
$$
$$
\tilde{b}_{V_{0}}:=\inf_{u\in \widetilde{\mathcal{T}}_{V_{0}}} \E_{V_{0}}(u), 
$$
$$
\widetilde{\mathcal{T}}_{V_{0}}=\{u\in \X\setminus\{0\}: \E'_{V_{0}}(u)=0, \max_{\R^{N}} u(\cdot,0)=u(0,0) \},
$$
and
$$
\widetilde{\mathcal{S}}_{V_{0}}=\{u\in \widetilde{\mathcal{T}}_{V_{0}}: \E_{V_{0}}(u)=\tilde{b}_{V_{0}}\}.
$$
Next we show that it is possible to define a map which relates the ground state solutions of \eqref{PSchrodinger} to the ones for \eqref{LP}.
We first prove the following result for the Kirchhoff functions.
\begin{lem}\label{lem2.17FIJ}
Assume that $M\in C([0, \infty))$ and $M(t)\geq 0$. Then, $(M5)$ is equivalent to
\begin{compactenum}[$(M6)$]
\item $t\mapsto \wh{M}(t)-\left(1-\frac{2s}{N}\right)M(t)t$  is nondecreasing in  $[0, \infty)$.
\end{compactenum}
\end{lem}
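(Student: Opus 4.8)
The plan is to reduce the whole equivalence to a single increment identity for the function in $(M6)$. Set $\sigma:=\frac{2s}{N-2s}\in(0,\infty)$, so that $1-\frac{2s}{N}=\frac{N-2s}{N}=\frac{1}{1+\sigma}$ and $\frac{1}{1+\sigma}\bigl(t_{2}^{1+\sigma}-t_{1}^{1+\sigma}\bigr)=\int_{t_{1}}^{t_{2}}\tau^{\sigma}\,d\tau$ for all $0\le t_{1}\le t_{2}$. Write $q(t):=M(t)t^{-\sigma}$ for $t>0$, so that $(M5)$ says exactly that $q$ is nonincreasing on $(0,\infty)$, and set $g(t):=\widehat{M}(t)-\bigl(1-\frac{2s}{N}\bigr)M(t)t$, a continuous function, so that $(M6)$ says exactly that $g$ is nondecreasing on $[0,\infty)$. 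Using $M(\tau)=\tau^{\sigma}q(\tau)$ and splitting $t_{2}^{1+\sigma}q(t_{2})-t_{1}^{1+\sigma}q(t_{1})$ by adding and subtracting $t_{1}^{1+\sigma}q(t_{2})$ (respectively $t_{2}^{1+\sigma}q(t_{1})$), one obtains, for all $0\le t_{1}\le t_{2}$,
\[
g(t_{2})-g(t_{1})=\int_{t_{1}}^{t_{2}}\tau^{\sigma}\bigl(q(\tau)-q(t_{2})\bigr)\,d\tau-\frac{t_{1}^{1+\sigma}}{1+\sigma}\bigl(q(t_{2})-q(t_{1})\bigr),
\]
and, symmetrically,
\[
g(t_{2})-g(t_{1})=\int_{t_{1}}^{t_{2}}\tau^{\sigma}\bigl(q(\tau)-q(t_{1})\bigr)\,d\tau-\frac{t_{2}^{1+\sigma}}{1+\sigma}\bigl(q(t_{2})-q(t_{1})\bigr).
\]
Deriving these two identities is the technical heart of the argument; the rest is sign-chasing.

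To prove $(M5)\Rightarrow(M6)$, fix $0\le t_{1}<t_{2}$ and use the first identity: since $q$ is nonincreasing, $q(\tau)\ge q(t_{2})$ on $[t_{1},t_{2}]$, so the integral is $\ge 0$; and $q(t_{2})-q(t_{1})\le 0$, so the remaining term is $\ge 0$ as well (it simply vanishes when $t_{1}=0$). Hence $g(t_{2})\ge g(t_{1})$, which is $(M6)$. The only point needing a word is the integrability of $\tau\mapsto\tau^{\sigma}\bigl(q(\tau)-q(t_{2})\bigr)$ near $0$ when $t_{1}=0$, which is immediate because $\tau^{\sigma}q(\tau)=M(\tau)$ is continuous on $[0,t_{2}]$.

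To prove $(M6)\Rightarrow(M5)$, argue by contradiction: if $q$ fails to be nonincreasing, there exist $0<t_{1}<t_{2}$ with $q(t_{1})<q(t_{2})$. Let $t^{*}\in[t_{1},t_{2}]$ be a point at which the continuous function $q$ attains its maximum over $[t_{1},t_{2}]$; then $q(t^{*})\ge q(t_{2})>q(t_{1})$, so $t^{*}>t_{1}$. Applying the first identity to the pair $(t_{1},t^{*})$, the integral term is $\le 0$ because $q(\tau)\le q(t^{*})$ on $[t_{1},t^{*}]$, while the remaining term is strictly negative because $t_{1}>0$ and $q(t^{*})-q(t_{1})>0$; hence $g(t^{*})<g(t_{1})$ with $t_{1}<t^{*}$, contradicting that $g$ is nondecreasing. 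Since $M$ is never divided by in this argument, this also covers the case in which $M$ has zeros.

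I expect the main obstacle to be bookkeeping rather than anything deep: establishing the increment identities in a usable form and keeping straight the roles of the exponent $\sigma$ and of the constant $(1+\sigma)^{-1}=1-\frac{2s}{N}$; once the identities are in place, both implications follow from elementary monotonicity together with a compactness argument for the maximum of $q$ on a compact interval. As a sanity check, when $M\in C^{1}$ one has $\frac{d}{dt}\bigl(M(t)t^{-\sigma}\bigr)=-\frac{1+\sigma}{t^{\sigma+1}}\,g'(t)$, which makes the equivalence transparent; the identities above are precisely the integrated form of this relation, valid for merely continuous $M\ge 0$.
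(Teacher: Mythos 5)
Your proof is correct, and both increment identities check out algebraically (expand $\widehat M(t_2)-\widehat M(t_1)=\int_{t_1}^{t_2}\tau^\sigma q(\tau)\,d\tau$ and split $t_2^{1+\sigma}q(t_2)-t_1^{1+\sigma}q(t_1)$ as you indicate). For the direction $(M5)\Rightarrow(M6)$ your argument is, at bottom, the same algebra as the paper's chain of inequalities~\eqref{2.21FIJ}: the two sign estimates you extract from the first identity (integral $\ge 0$, and $-t_1^{1+\sigma}(q(t_2)-q(t_1))\ge 0$) are exactly the two places the paper invokes $(M5)$ when it bounds $\int_{t_1}^{t_2} q(t)t^{\sigma}\,dt$ below by $q(t_2)\int_{t_1}^{t_2}t^{\sigma}\,dt$ and then replaces $q(t_2)$ by $q(t_1)$.

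Where you genuinely differ from the paper is in the packaging and in the converse. The paper proves only $(M5)\Rightarrow(M6)$ and outsources $(M6)\Rightarrow(M5)$ to ``as in the case $s=1$'' (Lemma~2.17 of \cite{FIJ}), whereas you derive an \emph{exact} increment identity rather than a one-directional chain, which then also hands you the reverse implication: taking a maximizer $t^{*}$ of the continuous function $q$ on $[t_1,t_2]$ kills the sign of the integral term, while the boundary term is strictly negative because $t_1>0$ and $q(t^{*})>q(t_1)$, so $g(t^{*})<g(t_1)$ contradicts $(M6)$. This compactness trick is elementary and keeps the whole lemma self-contained, which the paper's version is not. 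Your handling of the $t_1=0$ edge case (the boundary term vanishes because $t_1^{1+\sigma}q(t_1)=M(t_1)t_1$ and the integrand near $0$ is $M(\tau)-\tau^{\sigma}q(t_2)$, both continuous) is also a detail worth having written out, since $q$ itself need not extend continuously to $0$.
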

\begin{proof}
We argue as in Lemma 2.17 in \cite{FIJ}. Let $(M5)$ be in force. Then, for $0\leq t_1<t_{2}$ we have
\begin{align}\begin{split}\label{2.21FIJ}
\wh{M}(t_{2})-\left(1-\frac{2s}{N} \right)M(t_{2})t_{2}&=\wh{M}(t_{1})+\int_{t_1}^{t_2} \frac{M(t)}{t^{\frac{2s}{N-2s}}}t^{\frac{2s}{N-2s}}\, dt-\left(1-\frac{2s}{N} \right)M(t_{2})t_{2}\\
&\geq \wh{M}(t_{1})+ \frac{M(t_2)}{t_{2}^{\frac{2s}{N-2s}}} \int_{t_1}^{t_2} t^{\frac{2s}{N-2s}}\, dt-\left(1-\frac{2s}{N} \right)M(t_{2})t_{2}\\
&=\wh{M}(t_{1})-\left(1-\frac{2s}{N} \right) \frac{M(t_2)}{t_{2}^{\frac{2s}{N-2s}}} t_{1}^{\frac{N}{N-2s}}  \\
&\geq \wh{M}(t_{1})-\left(1-\frac{2s}{N} \right)M(t_{1})t_{1}. 
\end{split}\end{align}
The other implication is obtained as in the case $s=1$ with small modifications, so we omit the details.
\end{proof}

\begin{lem}\label{lem2.16FIJ}
Assume $(M1)$-$(M5)$. Then, $\mathcal{S}_{V_{0}}\neq \emptyset$ and there exists an injective map $T: \widetilde{\mathcal{S}}_{V_{0}} \ri \mathcal{S}_{V_{0}}$.
\end{lem}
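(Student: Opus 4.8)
The plan is to first note that $\mathcal{S}_{V_{0}}\neq\emptyset$ is already established in Lemma \ref{Final}, and that the analogous statement $\widetilde{\mathcal{S}}_{V_{0}}\neq\emptyset$ for the Schr\"odinger-type problem \eqref{PSchrodinger} follows from exactly the same argument with $M\equiv 1$ (which trivially satisfies $(M1)$--$(M5)$), using the Pohozaev identity \eqref{2.20FIJ} in place of the one in Lemma \ref{lem2.11FIJ}. So the content of the lemma is the construction of the injective map $T$. Given $v\in\widetilde{\mathcal{S}}_{V_{0}}$, the idea is to rescale: set $T(v)(x,y):=v(x/t_{v},y/t_{v})$ where $t_{v}>0$ is chosen so that $T(v)$ solves \eqref{LP}. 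Plugging the dilation $v_{t}(x,y):=v(x/t,y/t)$ into the weak formulation of \eqref{LP}, and using that $v$ solves \eqref{PSchrodinger} together with its Pohozaev identity \eqref{2.20FIJ}, one sees (as in the computation of $\frac{d}{dt}L_{V_0}(\gamma(t))$ in the proof of Lemma \ref{Final}) that $v_{t}$ is a critical point of $L_{V_{0}}$ precisely when
$$
M\!\left(t^{N-2s}\|v\|^{2}_{\x}\right)=t^{2s}.
$$
By $(M1)$ the left-hand side stays $\geq m_{0}>0$ while $t^{2s}\to 0$ as $t\to 0^{+}$, and by $(M3)$ the left-hand side is $o(t^{2s})$ as $t\to\infty$ (after the change of variable $\tau=t^{N-2s}\|v\|^2_{\x}$, $(M3)$ gives $M(\tau)/\tau^{2s/(N-2s)}\to0$, i.e. $M(t^{N-2s}\|v\|^2_{\x})/t^{2s}\to0$); hence by continuity there is at least one solution $t_{v}$. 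Moreover $(M5)$ — equivalently, via Lemma \ref{lem2.17FIJ}, the monotonicity property, but more directly the fact that $t\mapsto M(t^{N-2s}\|v\|^2_{\x})/t^{2s}$ is nonincreasing, which was already observed in the proof of Lemma \ref{Final} — forces $t_{v}$ to be \emph{unique}. This makes $v\mapsto t_{v}$, and hence $T$, well defined.

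Next I would check $T(v)\in\mathcal{S}_{V_{0}}$. That $T(v)$ solves \eqref{LP} and has its maximum at $(0,0)$ is immediate since dilation preserves the location of the maximum of the trace. To see $T(v)$ is a \emph{least energy} solution, one runs the same mountain-pass identification as in Lemma \ref{Final}: for the dilation path $\gamma(t)$ through $T(v)$ one computes, using the Pohozaev identity for $T(v)$ and the monotonicity from $(M5)$, that $\max_{t\geq0}L_{V_{0}}(\gamma(t))=L_{V_{0}}(T(v))$ and that $\gamma$ (reparametrized) lies in $\Gamma_{V_{0}}$; hence $L_{V_{0}}(T(v))\geq c_{V_{0}}=b_{V_{0}}$. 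For the reverse inequality, one exploits the correspondence the other way: if $w\in\mathcal{S}_{V_{0}}$, a dilation $w(\cdot/\sigma,\cdot/\sigma)$ solving \eqref{PSchrodinger} exists by the same fixed-point argument (now solving $1=\sigma^{2s}/M(\sigma^{N-2s}\|w\|^2_{\x})$, again uniquely by $(M5)$), it lies in $\widetilde{\mathcal T}_{V_0}$, so $\E_{V_0}$ of it is $\geq\tilde b_{V_0}$; tracking the energy identities shows this rescaling is an inverse construction to $T$ on the level of the Nehari–Pohozaev manifolds, and a direct comparison of $b_{V_0}$ with $\tilde b_{V_0}$ via these mutually inverse dilations gives $L_{V_0}(T(v))=b_{V_0}$. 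Thus $T(v)\in\mathcal S_{V_0}$.

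Finally, injectivity: if $T(v_{1})=T(v_{2})$, then $v_{1}(x/t_{1},y/t_{1})=v_{2}(x/t_{2},y/t_{2})$, so $v_{1}$ and $v_{2}$ are dilations of one another, say $v_{1}(x,y)=v_{2}(x/\sigma,y/\sigma)$ with $\sigma=t_{2}/t_{1}$. Both $v_1,v_2$ solve \eqref{PSchrodinger}; substituting the dilation into the weak formulation and using the Pohozaev identity \eqref{2.20FIJ} for $v_2$ shows $v_1$ can solve \eqref{PSchrodinger} only if $\sigma^{2s}=1$, i.e. $\sigma=1$, whence $v_{1}=v_{2}$. The main obstacle, and the step deserving the most care, is the \emph{uniqueness} of the scaling parameter $t_{v}$ (and its analogue $\sigma$): everything downstream — well-definedness of $T$, the energy comparison $b_{V_0}=\tilde b_{V_0}$, and injectivity — rests on the strict monotonicity coming from $(M5)$, so one must make sure $(M5)$ (or Lemma \ref{lem2.17FIJ}) genuinely yields that $t\mapsto M(t^{N-2s}c)/t^{2s}$ is \emph{strictly} decreasing where it matters, or at least that the equation $M(t^{N-2s}c)=t^{2s}$ cannot hold on an interval, which is the case since its left side is nondecreasing while the right side is strictly increasing. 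I would spell this monotonicity/uniqueness argument out carefully and treat the rest as routine adaptations of the computations already performed in Lemma \ref{Final}.
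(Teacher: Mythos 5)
Your overall strategy matches the paper's (define $T$ by a dilation solving $M(t^{N-2s}\|v\|^{2}_{\x})=t^{2s}$, then check $T(v)\in\mathcal{S}_{V_0}$ and injectivity via the scaling structure of the Schr\"odinger equation), but there are two places where the proposal goes wrong or leaves the crucial work undone.

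First, the reasoning offered for uniqueness of $t_{v}$ is incorrect. You claim the equation $M(t^{N-2s}c)=t^{2s}$ cannot hold on an interval ``since its left side is nondecreasing while the right side is strictly increasing,'' but a nondecreasing function may well coincide with a strictly increasing one on an interval. In fact, $(M5)$ is compatible with $M(\tau)=k_{0}\tau^{2s/(N-2s)}$ on some interval, and if $k_{0}c^{2s/(N-2s)}=1$ the equation holds identically there. The paper sidesteps this entirely: it defines $t_{\phi}$ as the \emph{infimum} of the set of solutions, checks that set is nonempty and bounded away from $0$ (so the infimum is attained by continuity), and never claims uniqueness. You should do the same; nothing downstream in this lemma actually requires $t_{v}$ to be unique.

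Second, and more seriously, the verification that $T(v)$ is a \emph{least energy} solution is only gestured at (``tracking the energy identities,'' ``mutually inverse dilations''), and this is where the real content of the proof lives. The paper's argument is: for an arbitrary $w\in\mathcal{T}_{V_0}$, set $\alpha^{2s}:=M(\|w\|^{2}_{\x})$, so $w_{\alpha}(x,y):=w(\alpha x,\alpha y)$ solves \eqref{PSchrodinger}; combining the Pohozaev identity \eqref{2.20FIJ} for $\phi$ and for $w_{\alpha}$ with $\E_{V_0}(\phi)\le\E_{V_0}(w_{\alpha})$ gives $\alpha^{N-2s}\|\phi\|^{2}_{\x}\le\|w\|^{2}_{\x}$; together with $t_{\phi}\le\alpha$ (from $(M1)$, $(M4)$ and continuity) this yields the key inequality $\|T\phi\|^{2}_{\x}=t_{\phi}^{N-2s}\|\phi\|^{2}_{\x}\le\|w\|^{2}_{\x}$. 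Only then can one invoke Lemma~\ref{lem2.17FIJ} (condition $(M6)$, the nondecreasing property of $t\mapsto\wh{M}(t)-(1-\frac{2s}{N})M(t)t$) to conclude, since both $T\phi$ and $w$ satisfy $P(\cdot)=0$, that
\begin{equation*}
2L_{V_0}(T\phi)=\wh{M}(\|T\phi\|^{2}_{\x})-\Bigl(1-\tfrac{2s}{N}\Bigr)M(\|T\phi\|^{2}_{\x})\|T\phi\|^{2}_{\x}\le\wh{M}(\|w\|^{2}_{\x})-\Bigl(1-\tfrac{2s}{N}\Bigr)M(\|w\|^{2}_{\x})\|w\|^{2}_{\x}=2L_{V_0}(w),
\end{equation*}
and hence $L_{V_0}(T\phi)\le b_{V_0}$. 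Neither the comparison $\|T\phi\|^{2}_{\x}\le\|w\|^{2}_{\x}$ nor the role of Lemma~\ref{lem2.17FIJ} appears in your plan, and without them the ``least energy'' conclusion is unsupported. Your injectivity argument via \eqref{2.20FIJ} (giving $\sigma^{2s-N}=\sigma^{-N}$ hence $\sigma=1$) is a valid alternative to the paper's more direct use of the equation.
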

\begin{proof}
By \cite{Aade, BKS, CW} we know that $\widetilde{\mathcal{S}}_{V_{0}}\neq \emptyset$. Let $\phi\in \widetilde{\mathcal{S}}_{V_{0}}$ and define
$$
t_{\phi}:=\inf\left\{ t>0: t^{2s}=M(t^{N-2s} \|\phi\|^{2}_{\x})\right\}.
$$
In what follows we verify that $t_{\phi}\in (0, \infty)$. Since $\T_{V_{0}}\neq \emptyset$ by Lemma \ref{Final}, we can find $w\in \T_{V_{0}}$  and put $\alpha^{2s}:=M(\|w\|^{2}_{\x})$. Set $w_{\alpha}(x,y)=w(\alpha x, \alpha y)$ and note that $w_{\alpha}$ is a weak solution to 
\begin{align}\label{Palpha}
\left\{
\begin{array}{ll}
-\dive(y^{1-2s} \nabla w_{\alpha})=0 &\mbox{ in } \R^{N+1}_{+}, \\
\frac{\partial w_{\alpha}}{\partial \nu^{1-2s}}=-V_{0} w_{\alpha}(\cdot, 0)+f(w_{\alpha}(\cdot, 0)) &\mbox{ in } \R^{N}. 
\end{array}
\right.
\end{align}
By \eqref{2.20FIJ} we get
$$
\frac{s}{N} \|\phi\|^{2}_{\x}=\E_{V_{0}}(\phi)_{V_{0}}\leq \E_{V_{0}}(w_{\alpha})=\frac{s}{N} \|w_{\alpha}\|^{2}_{\x}=\frac{s}{N} \alpha^{2s-N} \|w\|^{2}_{\x}
$$
that is $\alpha^{N-2s} \|\phi\|^{2}_{\x}\leq \|w\|^{2}_{\x}$. Using $(M4)$ we have 
$$
M(\alpha^{N-2s} \|\phi\|^{2}_{\x})\leq M( \|w\|^{2}_{\x})=\alpha^{2s}.
$$ 
From $(M1)$ and the continuity of $M$, there is $t_{0}\in (0, \alpha]$ such that $t_{0}^{2s}=M(t_{0}^{N-2s}\|\phi\|^{2}_{\x})$. Consequently, $0<m_{0}\leq t_{\phi}^{2s}\leq \alpha^{2s}$ and $t_{\phi}$ is well-defined.

At this point, for $u\in \T_{V_{0}}$, we define 
$$
(Tu)(x,y):=u(x/t_{u}, y/t_{u}).
$$ 
Since 
$$
t_{u}^{2s}=M(t_{u}^{N-2s}\|u\|^{2}_{\x}),
$$ 
we see that $Tu$ is a solution to \eqref{LP}. Using $t_{u}\leq \alpha$ and $\alpha^{N-2s} \|u\|^{2}_{\x}\leq \|w\|^{2}_{\x}$ we get $\|Tu\|^{2}_{\x}\leq \|w\|^{2}_{\x}$. 
On the other hand, we observe that for all $u\in \X$ such that $P(u)=0$ it holds
$$
L_{V_{0}}(u)=\frac{1}{2} \left[\wh{M}(\|u\|^{2}_{\x})-\left( 1-\frac{2s}{N} \right) M(\|u\|^{2}_{\x})\|u\|^{2}_{\x}  \right].
$$
Then, from Lemma \ref{lem2.17FIJ} and $(M5)$, we deduce that $L_{V_{0}}(Tu)\leq L_{V_{0}}(w)$. By the arbitrariness of $w\in \T_{V_{0}}$, we infer that $Tu\in \mathcal{S}_{V_{0}}$. Hence, $\mathcal{S}_{V_{0}}\neq \emptyset$ and $T: \widetilde{\mathcal{S}}_{V_{0}} \ri \mathcal{S}_{V_{0}}$ is well-defined. \\
Finally, we show that  $T$ is injective. Let $u_{1}, u_{2}\in \widetilde{\mathcal{S}}_{V_{0}}$ be such that $Tu_{1}=Tu_{2}$. Then, $u_{1}(x, y)=u_{2}(\alpha x, \alpha y)$ for some $\alpha>0$. Since $u_{1}(\cdot , 0)$ and $u_{2}(\cdot, 0)$ are nontrivial solutions of 
$(-\Delta)^{s}u+V_{0}u=f(u)$ in $\R^{N}$, we deduce that 
$\alpha^{2s}(-\Delta)^{s}u_{2}(\alpha x, 0)=(-\Delta)^{s} u_{1}(x, 0)=(-\Delta)^{s}u_{2}(\alpha x, 0)$ which implies that $(\alpha^{2s}-1)(-\Delta)^{s}u_{2}(\cdot, 0)=0$ in $\R^{N}$. Hence, $\alpha=1$ and $u_{1}\equiv u_{2}$. 
\end{proof}

\begin{prop}\label{COMPs}
$\mathcal{S}_{V_{0}}$ is compact in $\X$.
\end{prop}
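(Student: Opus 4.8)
The plan is to take an arbitrary sequence $(u_n)\subset \mathcal{S}_{V_{0}}$ and extract a subsequence converging in $\X$ to some element of $\mathcal{S}_{V_{0}}$. Since each $u_n\in \mathcal{T}_{V_{0}}$ is a critical point of $L_{V_{0}}$ with $L_{V_{0}}(u_n)=b_{V_{0}}=c_{V_{0}}$, each $u_n$ automatically satisfies $P(u_n)=0$ (by Lemma \ref{lem2.11FIJ}), so the sequence satisfies the analogue of \eqref{3.6HLP} exactly (not just asymptotically). First I would invoke Lemma \ref{lem3.6HLP} to conclude that $(u_n)$ is bounded in $\X$, say $\|u_n\|_{\X}\le C$, hence $M(\|u_n\|^{2}_{\x})\le M(C^{2})$ by $(M4)$, and by $(M1)$ we have $M(\|u_n\|^{2}_{\x})\ge m_0>0$; passing to a subsequence, $M(\|u_n\|^{2}_{\x})\to \alpha_0$ with $m_0\le\alpha_0<\infty$.

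Next, by translation invariance of $L_{V_{0}}$ under $\R^N$-translations, I may argue as in Lemma \ref{Final}: the non-vanishing Lemma \ref{lemmaB} applies to $(u_n)$ (the proof there only uses \eqref{3.6HLP}), so after translating by suitable $(x_n)\subset\R^N$ we obtain $\tilde u_n\rightharpoonup \tilde u$ in $X^{1,s}_{\rm rad}(\R^{N+1}_+)$ with $\tilde u\ne 0$ and $\tilde u_n(\cdot,0)\to \tilde u(\cdot,0)$ in $L^q(\R^N)$ for $q\in(2,\2)$. Here I must be a little careful: the elements of $\mathcal{S}_{V_{0}}$ are normalized by $\max_{\R^N}u(\cdot,0)=u(0,0)$, so the translations $x_n$ should in fact be taken to be $0$ (the maximum is already at the origin), which makes the compactness statement cleaner and keeps $\tilde u$ in the normalized class after passing to the limit — one should check that the max is attained at $0$ for the limit, using the uniform decay estimate that follows from the Moser iteration and Hölder regularity arguments cited in the proof of Lemma \ref{Final}, together with $L^q$-convergence. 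Then $\tilde u$ solves \eqref{AMP} with this $\alpha_0$, and by Fatou $0<m_0\le M(\|\tilde u\|^2_{\x})\le \alpha_0$.

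The main obstacle, exactly as flagged in the introduction (see the discussion preceding Lemma \ref{lem4.1G}), is to upgrade weak convergence to strong convergence and to identify $\alpha_0=M(\|\tilde u\|^2_{\x})$. For this I would replicate the chain of inequalities in the proof of Lemma \ref{Final}: using the Pohozaev identity \eqref{POHw} for $\tilde u$ with parameter $\alpha_0$, Lemma 2.4 in \cite{CW} applied with $X=H^s_{\rm rad}(\R^N)$, $P(t)=f(t)t$, $p_1=2$, $p_2=\2$, and the fact that $\langle L'_{V_{0}}(u_n),u_n\rangle=0$ gives $M(\|u_n\|^2_{\x})\|u_n\|^2_{\x}+V_0|u_n(\cdot,0)|_2^2=\int f(u_n(x,0))u_n(x,0)\,dx$; combining with the $L^q$-convergence of the nonlinear term and the lower semicontinuity of the norm squeezes the inequalities into equalities, yielding $\|u_n\|_{\X}\to\|\tilde u\|_{\X}$, hence strong convergence $u_n\to\tilde u$ in $\X$ and $\alpha_0=M(\|\tilde u\|^2_{\x})$. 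Finally, strong convergence gives $L_{V_{0}}(\tilde u)=\lim L_{V_{0}}(u_n)=b_{V_{0}}$ and $L'_{V_{0}}(\tilde u)=0$, and the normalization $\max_{\R^N}\tilde u(\cdot,0)=\tilde u(0,0)$ passes to the limit by the uniform $C^{0,\beta}$ bounds and $L^q$-convergence; therefore $\tilde u\in \mathcal{S}_{V_{0}}$, proving compactness. One subtlety worth isolating as a separate step: to guarantee that the limit's maximum is genuinely at $0$ (and not merely $\le$), one uses the positivity and the strict-maximum/Harnack information from \cite{CSire, JLX} as in Lemma \ref{Final}, plus the uniform decay bound $u_n(x,0)\le C/(1+|x|^{N+2s})$ coming from the comparison-function arguments, which prevents the maximum from escaping to infinity.
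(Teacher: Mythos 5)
Your approach is genuinely different from the paper's, and it contains a significant gap. The paper does not prove compactness of $\mathcal{S}_{V_{0}}$ by a direct concentration--compactness argument; instead it observes that for any $w_{n}\in\mathcal{S}_{V_{0}}$, the rescaled function $v_{n}(x,y):=w_{n}(\alpha_{n}x,\alpha_{n}y)$ with $\alpha_{n}^{2s}:=M(\|w_{n}\|^{2}_{\x})$ solves the \emph{scalar} (non-Kirchhoff) problem \eqref{PSchrodinger}, then spends most of the proof verifying $v_{n}\in\widetilde{\mathcal{S}}_{V_{0}}$ and $m_{0}^{1/2s}\le\alpha_{n}\le C_{0}$, and finally cites the known compactness of $\widetilde{\mathcal{S}}_{V_{0}}$ (Proposition 2.6 in \cite{Seok}) to conclude. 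That is, compactness of the Kirchhoff ground-state set is \emph{reduced} to compactness of the already-understood scalar ground-state set via a uniformly bounded dilation, rather than being re-derived from scratch.

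The gap in your argument is the $L^{q}$-convergence step. The chain of equalities you want to replicate from Lemma \ref{Final} hinges on $\int f(u_{n}(\cdot,0))u_{n}(\cdot,0)\,dx\to\int f(\tilde{u}(\cdot,0))\tilde{u}(\cdot,0)\,dx$, which in that lemma follows from the compact embedding $H^{s}_{\mathrm{rad}}(\R^{N})\hookrightarrow L^{q}(\R^{N})$ (and is the hypothesis of Lemma 2.4 in \cite{CW}, explicitly invoked with $X=H^{s}_{\mathrm{rad}}$). But the PS sequence in Lemma \ref{Final} lies in $\Xr$ by construction (it comes from Proposition \ref{prop3.4HLP}), whereas elements of $\mathcal{S}_{V_{0}}$ are only normalized so that $\max_{\R^{N}}u(\cdot,0)=u(0,0)$ and need not be radial; your statement that $\tilde{u}_{n}\rightharpoonup\tilde{u}$ in $X^{1,s}_{\mathrm{rad}}(\R^{N+1}_{+})$ is therefore not available. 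You try to compensate with a uniform polynomial decay bound $u_{n}(x,0)\le C/(1+|x|^{N+2s})$, but that bound is itself established (for a single solution, and then uniformly) using the very $L^{q}$-compactness and uniform smallness of the tails you are trying to prove, so the argument as written is circular. The paper's scaling reduction avoids all of this, because the uniform decay, $L^{q}$-compactness and regularity of $\widetilde{\mathcal{S}}_{V_{0}}$ are packaged into the cited Proposition 2.6 of \cite{Seok}. Your direct approach could in principle be completed, but only by essentially re-proving that proposition as an intermediate step; as it stands, the radial-compactness step is not justified.
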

\begin{proof}
Let $(w_{n})\subset \mathcal{S}_{V_{0}}$ and set $v_{n}(x, y):=w_{n}(\alpha_{n} x, \alpha_{n} y)$ where 
$$
\alpha_{n}^{2s}:=M(\|w_{n}\|^{2}_{\x}).
$$ 
Then, $v_{n}$ is a solution to \eqref{PSchrodinger}. Now we prove that $v_{n}\in \widetilde{\mathcal{S}}_{V_{0}}$ and that there exists $C_{0}>0$ such that $m_{0}\leq \alpha_{n}^{2s}\leq C_{0}^{2s}$ for all $n\in \mathbb{N}$. Note that $m_{0}\leq \alpha_{n}^{2s}$ thanks to $(M1)$. Now, by Lemma \ref{lem2.11FIJ} we have 
\begin{align*}
b_{V_{0}} &= L_{V_{0}} (w_{n}) - \frac{1}{N} P(w_{n}) \\
&=\frac{1}{2} \left[ \wh{M}\left(\|w_{n}\|^{2}_{\x}\right) - \left(1- \frac{2s}{N}\right) M\left(\|w_{n}\|^{2}_{\x}\right) \|w_{n}\|^{2}_{x}\right].  
\end{align*} 
In light of $(M2)$ we deduce that $\|w_{n}\|_{\x}$ is bounded and then $(\alpha_{n})$ is bounded. \\
Take $\phi_{n}\in \widetilde{\mathcal{S}}_{V_{0}}$. Proceeding as in the proof of Lemma \ref{lem2.16FIJ} and using $(M6)$ we can see that $\|\phi_{n}\|^{2}_{\x} \leq \|v_{n}\|^{2}_{\x}$, $t_{n}\leq \alpha_{n}$ and $b_{V_{0}}= L_{V_{0}}(\phi_{n, t_{n}})\leq L_{V_{0}}(w_{n})= b_{V_{0}}$, where 
$$
t_{n}:=\inf \left\{t\in (0, \infty) : t^{2s} = M(t^{N-2s} \|\phi_{n}\|^{2}_{\x})\right\}
$$ 
and $\phi_{n, t_{n}}(x, y):= \phi_{n}(\frac{x}{t_{n}}, \frac{y}{t_{n}})= T(\phi_{n})$. Moreover, $L_{V_{0}}(\phi_{n, t_{n}})= b_{V_{0}}= L_{V_{0}}(w_{n})$. At this point, if we show that 
\begin{align}\label{pizza}
\|\phi_{n}\|_{\x}= \|v_{n}\|_{\x},
\end{align} 
then we have 
\begin{align*}
\E_{V_{0}}(\phi_{n})= \frac{s}{N}\|\phi_{n}\|^{2}_{\x}= \frac{s}{N} \|v_{n}\|^{2}_{\x}= \E_{V_{0}}(v_{n}), 
\end{align*} 
where we used \eqref{2.20FIJ}. Hence we deduce that $v_{n}\in \widetilde{\mathcal{S}}_{V_{0}}$. Next, we prove that \eqref{pizza} holds true. Assume by contradiction that $\|v_{n}\|_{\x}>\|\phi_{n}\|_{\x}$. Taking into account that $t_{n}\leq \alpha_{n}$ and $\|w_{n}\|^{2}_{\x}= \alpha_{n}^{N-2s} \|v_{n}\|^{2}_{\x}$, we get
\begin{align*}
\|\phi_{n, t_{n}}\|^{2}_{\x}= t_{n}^{N-2s} \|\phi_{n}\|^{2}_{\x} <\alpha_{n}^{N-2s} \|v_{n}\|^{2}_{\x}= \|w_{n}\|^{2}_{\x}. 
\end{align*}
On the other hand, using $P(\phi_{n, t_{n}})=0=P(w_{n})$, we infer that 
\begin{align*}
&\frac{1}{2} \left\{ \wh{M}(\|\phi_{n, t_{n}}\|^{2}_{\x} ) - \left(1- \frac{2s}{N}\right) M(\|\phi_{n, t_{n}}\|^{2}_{\x}) \|\phi_{n, t_{n}}\|^{2}_{\x}\right\}\\
&=L_{V_{0}}(\phi_{n, t_{n}}) = L_{V_{0}}(w_{n}) = \frac{1}{2} \left\{ \wh{M}(\|w_{n}\|^{2}_{\x} ) - \left(1- \frac{2s}{N}\right) M(\|w_{n}\|^{2}_{\x}) \|w_{n}\|^{2}_{\x}\right\}. 
\end{align*}
By $(M5)$, $(M6)$ in Lemma \ref{lem2.16FIJ} and \eqref{2.21FIJ}, it is easy to see that for any $\|\phi_{n, t_{n}}\|^{2}_{\x}\leq t_{1}<t_{2}\leq \|w_{n}\|^{2}_{\x}$ it holds 
\begin{align*}
\wh{M}(t_{1}) - \left(1- \frac{2s}{N}\right) M(t_{1}) t_{1}= \wh{M}(t_{2}) - \left(1- \frac{2s}{N}\right) M(t_{2}) t_{2}
\end{align*} 
and 
\begin{align}\label{2.23FIJ}
\frac{M(t_{1})}{t_{1}^{2s/(N-2s)}} = \frac{M(t_{2})}{t_{2}^{2s/(N-2s)}}. 
\end{align}
Otherwise, we have $L_{V_{0}}(\phi_{n, t_{n}})< L_{V_{0}}(w_{n})$, that is a contradiction. Moreover, in view of \eqref{2.23FIJ}, we get
\begin{align*}
M(t)= k_{0}t^{\frac{2s}{N-2s}} \mbox{ in } [\|\phi_{n, t_{n}}\|^{2}_{\x}, \|w_{n}\|^{2}_{\x}], 
\end{align*} 
for some $k_{0}>0$. By the definitions of $\alpha_{n}$ and $t_{n}$, and using $t_{n}^{N-2s} \|\phi_{n}\|^{2}_{\x}= \|\phi_{n, t_{n}}\|^{2}_{\x}$, we deduce that
\begin{align*}
t_{n}^{2s}&=M(t_{n}^{N-2s}\|\phi_{n}\|^{2}_{\x})=k_{0} t_{n}^{2s} \|\phi_{n}\|^{\frac{4s}{N-2s}}_{\x}\\
\alpha_{n}^{2s}&=M(\|w_{n}\|^{2}_{\x})=M(\|v_{n}\|^{2}_{\x})=k_{0} \alpha_{n}^{2s} \|v_{n}\|^{\frac{4s}{N-2s}}_{\x} 
\end{align*}
which gives $ \|\phi_{n}\|^{2}_{\x}=k_{0}^{-\frac{N-2s}{2}}=\|v_{n}\|^{2}_{\x}$ and this is a contradiction.

Now, observing that $w_{n}(x,y)=v_{n}(\alpha_{n}^{-1} x, \alpha_{n}^{-1} y)$, it is enough to prove that $v_{n}$ has a convergent subsequence in $\X$. Since $\mathcal{S}_{V_{0}}$ is compact in $\X$ (see Proposition 2.6 in \cite{Seok}) we obtain the thesis.
\end{proof}

\section{critical limiting problems}

In this section we extend the previous results for the following critical limiting problem:
\begin{align}\label{LPC}
\left\{
\begin{array}{ll}
-\dive(y^{1-2s} \nabla w)=0 &\mbox{ in } \R^{N+1}_{+}, \\
\frac{1}{M(\|w\|^{2}_{\x})} \frac{\partial w}{\partial \nu^{1-2s}}=-V_{0} w(\cdot, 0)+f(w(\cdot, 0)) &\mbox{ in } \R^{N},
\end{array}
\right.
\end{align}
where $f$ satisfies $(f_1)$, $(f'_2)$ and $(f'_3)$.
The study of \eqref{LPC} will be done following some arguments used in \cite{ZCDO}.\\
In order to find weak solutions to \eqref{LPC}, we look for critical points of the energy functional $L_{V_{0}}: \X\ri \R$ given by
$$
L_{V_{0}}(u):=\frac{1}{2} \wh{M}\left(\|u\|^{2}_{X^{s}(\R^{N+1}_{+})}\right)+\frac{1}{2}\int_{\R^{N}} V_{0} u^{2}(x,0)\, dx-\int_{\R^{N}} F(u(x,0))\, dx.
$$
We define
$$
\mathcal{T}_{V_{0}}:=\left\{u\in \X\setminus\{0\}: L'_{V_{0}}(u)=0, \max_{\R^{N}} u(\cdot,0)=u(0,0) \right\},
$$
$$
b_{V_{0}}:=\inf_{u\in \mathcal{T}_{V_{0}}} L_{V_{0}}(u), 
$$
and
$$
\mathcal{S}_{V_{0}}:=\{u\in \mathcal{T}_{V_{0}}: L_{V_{0}}(u)=b_{V_{0}}\}.
$$
We consider the following elliptic critical problem:
\begin{align}\label{PCSchrodinger}
\left\{
\begin{array}{ll}
-\dive(y^{1-2s} \nabla w)=0 &\mbox{ in } \R^{N+1}_{+}, \\
\frac{\partial w}{\partial \nu^{1-2s}}=-V_{0} w(\cdot, 0)+f(w(\cdot, 0)) &\mbox{ in } \R^{N}. 
\end{array}
\right.
\end{align}
Any solution $w$ to \eqref{PCSchrodinger} satisfies the following Pohozaev identity (see \cite{Adie, JLZ, ZDOS})
\begin{equation}\label{2.20FIJ}
\frac{N-2s}{2}\|w\|^{2}_{\x}-N\int_{\R^{N}} F(u(x,0))-\frac{V_{0}}{2} u^{2}(x,0)\, dx=0.
\end{equation}
Let us define
$$
\E_{V_{0}}(u):=\frac{1}{2} \|u\|^{2}_{\x}+\frac{V_{0}}{2}|u(\cdot, 0)|^{2}_{2}-\int_{\R^{N}} F(u(x,0))\, dx,
$$
$$
\tilde{b}_{V_{0}}:=\inf_{u\in \widetilde{\mathcal{T}}_{V_{0}}} \E_{V_{0}}(u), 
$$
where
$$
\widetilde{\mathcal{T}}_{V_{0}}:=\left\{u\in \X\setminus\{0\}: \E'_{V_{0}}(u)=0, \max_{\R^{N}} u(\cdot,0)=u(0,0) \right\},
$$
and
$$
\widetilde{\mathcal{S}}_{V_{0}}:=\{u\in \widetilde{\mathcal{T}}_{V_{0}}: \E_{V_{0}}(u)=\tilde{b}_{V_{0}}\}.
$$

In what follows, we show that $\mathcal{S}_{V_{0}}$ is compact in $\X$.
Arguing as in the proof of Lemma \ref{lem2.16FIJ} and in view of results in \cite{Adie, ZDOS}, we obtain that:
\begin{lem}\label{lem2.1ZCDO}
Assume $(M1)$-$(M5)$. Then, $\mathcal{S}_{V_{0}}\neq \emptyset$ if $\widetilde{\mathcal{S}}_{V_{0}}\neq \emptyset$. Moreover, there exists an injective map $T: \widetilde{\mathcal{S}}_{V_{0}} \ri \mathcal{S}_{V_{0}}$. In particular, for any $u\in \widetilde{\mathcal{S}}_{V_{0}}$, 
$$
(Tu)(x, y):=u(x/t_{u}, y/t_{u})
$$ 
where $t_{u}:=\inf\left\{t\in (0, \infty): t^{2s}=M(t^{N-2s}\|u\|^{2}_{\x})\right\}$.
\end{lem}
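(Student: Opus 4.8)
The plan is to reproduce, in the critical regime, the scaling argument of Lemma~\ref{lem2.16FIJ}, using $(f'_2)$, $(f'_3)$ and the Pohozaev and regularity results of \cite{Adie, ZDOS} in place of the subcritical ones. Fix $u\in\widetilde{\mathcal{S}}_{V_0}$ (nonempty by hypothesis; this nonemptiness, under $(f_1)$, $(f'_2)$, $(f'_3)$, is the content of \cite{Adie, ZDOS}). First I would check that $t_u$ is well defined and belongs to $(0,\infty)$: the map $g(t):=t^{2s}-M(t^{N-2s}\|u\|^2_{\x})$ is continuous on $(0,\infty)$ with $g(0^+)=-M(0)\le-m_0<0$ by $(M1)$, while $(M3)$ gives $g(t)/t^{2s}\ri1$, hence $g(t)\ri+\infty$, as $t\ri\infty$; thus $g$ has a zero, and since every zero $t$ satisfies $t^{2s}=M(t^{N-2s}\|u\|^2_{\x})\ge m_0$, also $t_u\ge m_0^{1/(2s)}>0$.

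Next I would show that $(Tu)(x,y):=u(x/t_u,y/t_u)$ lies in $\mathcal{T}_{V_0}$. Since $u$ solves \eqref{PCSchrodinger}, the dilation gives $-\dive(y^{1-2s}\nabla(Tu))=0$ in $\R^{N+1}_+$, $\|Tu\|^2_{\x}=t_u^{N-2s}\|u\|^2_{\x}$ and, by the homogeneity of the Dirichlet--to--Neumann operator, $\frac{\partial(Tu)}{\partial\nu^{1-2s}}=t_u^{-2s}\big[-V_0(Tu)(\cdot,0)+f((Tu)(\cdot,0))\big]$; since the definition of $t_u$ means precisely $t_u^{2s}=M(t_u^{N-2s}\|u\|^2_{\x})=M(\|Tu\|^2_{\x})$, this is exactly the boundary condition in \eqref{LPC}. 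The equality $\max_{\R^N}(Tu)(\cdot,0)=(Tu)(0,0)$ is preserved under dilation, so $Tu\in\mathcal{T}_{V_0}$; in particular $\mathcal{T}_{V_0}\neq\emptyset$, so $b_{V_0}$ is well defined (and $b_{V_0}\ge0$ by $(M6)$ and the Pohozaev reduction below).

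It remains to prove $L_{V_0}(Tu)=b_{V_0}$. By the Pohozaev identity valid for solutions of \eqref{LPC} (cf. Lemma~\ref{lem2.11FIJ} and \cite{Adie, ZDOS}), every $v\in\mathcal{T}_{V_0}$ satisfies $P(v)=0$, hence
$$
L_{V_0}(v)=\tfrac12\Big[\wh{M}\big(\|v\|^2_{\x}\big)-\big(1-\tfrac{2s}{N}\big)M\big(\|v\|^2_{\x}\big)\|v\|^2_{\x}\Big],
$$
and the same formula holds for $Tu$. Given $v\in\mathcal{T}_{V_0}$, set $\alpha^{2s}:=M(\|v\|^2_{\x})$; then $v_\alpha(x,y):=v(\alpha x,\alpha y)$ solves the critical Schr\"odinger problem \eqref{PCSchrodinger} with maximum at the origin, so $v_\alpha\in\widetilde{\mathcal{T}}_{V_0}$. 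Using $\E_{V_0}(u)=\tilde b_{V_0}\le\E_{V_0}(v_\alpha)$ together with $\E_{V_0}(\cdot)=\tfrac{s}{N}\|\cdot\|^2_{\x}$ on solutions of \eqref{PCSchrodinger} (a consequence of \eqref{2.20FIJ}), I obtain $\alpha^{N-2s}\|u\|^2_{\x}\le\|v\|^2_{\x}$, and then $M(\alpha^{N-2s}\|u\|^2_{\x})\le M(\|v\|^2_{\x})=\alpha^{2s}$ by $(M4)$, i.e. $g(\alpha)\ge0$; since also $g(0^+)<0$, the infimum $t_u$ of the zero set of $g$ satisfies $t_u\le\alpha$. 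Consequently $\|Tu\|^2_{\x}=t_u^{N-2s}\|u\|^2_{\x}\le\alpha^{N-2s}\|u\|^2_{\x}\le\|v\|^2_{\x}$. Because $(M5)$ is equivalent to the monotonicity property $(M6)$ (Lemma~\ref{lem2.17FIJ}), the map $\tau\mapsto\wh{M}(\tau)-(1-\tfrac{2s}{N})M(\tau)\tau$ is nondecreasing, so the displayed formula yields $L_{V_0}(Tu)\le L_{V_0}(v)$. Taking the infimum over $v\in\mathcal{T}_{V_0}$ gives $L_{V_0}(Tu)\le b_{V_0}$, while $Tu\in\mathcal{T}_{V_0}$ gives $L_{V_0}(Tu)\ge b_{V_0}$; hence $Tu\in\mathcal{S}_{V_0}$, so $\mathcal{S}_{V_0}\neq\emptyset$ and $T\colon\widetilde{\mathcal{S}}_{V_0}\ri\mathcal{S}_{V_0}$, $Tu=u(\cdot/t_u,\cdot/t_u)$, is well defined. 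For injectivity, if $Tu_1=Tu_2$ then $u_1(x,y)=u_2(\alpha x,\alpha y)$ with $\alpha:=t_{u_1}/t_{u_2}>0$; since $u_1(\cdot,0)$ and $u_2(\cdot,0)$ both solve $(-\Delta)^su+V_0u=f(u)$ in $\R^N$, comparing the equations gives $(\alpha^{2s}-1)(-\Delta)^su_2(\cdot,0)=0$, and as $u_2$ is a nontrivial solution this forces $\alpha=1$, whence $u_1\equiv u_2$.

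The main obstacle is the energy comparison $L_{V_0}(Tu)\le L_{V_0}(v)$: its whole force comes from the rigidity encoded in $(M5)$/$(M6)$ (the reduced functional above must be monotone in $\|\cdot\|^2_{\x}$), and in the critical regime it must be combined with $(f'_2)$, $(f'_3)$ and the $L^\infty/C^{0,\beta}$ regularity together with the Pohozaev identities of \cite{Adie, ZDOS} to guarantee that all the seminorms and integrals involved are finite and that the Pohozaev identities genuinely hold for the (only a priori $H^s$) solutions produced here.
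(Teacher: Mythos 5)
Your proof is correct and follows essentially the same route the paper intends, namely transcribing the scaling argument of Lemma~\ref{lem2.16FIJ} to the critical setting with the Pohozaev identity and regularity supplied by \cite{Adie, ZDOS}. One small but welcome refinement: you establish $t_u\in(0,\infty)$ directly from $(M1)$ (so $g(0^+)=-M(0)\le -m_0<0$) and $(M3)$ (so $g(t)/t^{2s}\to1$, hence $g(t)\to\infty$), whereas a verbatim copy of Lemma~\ref{lem2.16FIJ} would first invoke $\mathcal{T}_{V_0}\neq\emptyset$ via Lemma~\ref{Final}, a step that is not separately established in the critical section and whose conclusion is part of what is being proved here; your self-contained argument for the existence of $t_u$ neatly sidesteps that circularity, after which $Tu\in\mathcal{T}_{V_0}$ supplies the nonemptiness for the rest of the comparison. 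The energy comparison via the scaling of $\E_{V_0}$, the use of $(M4)$ to get $t_u\le\alpha$, the $(M5)\Leftrightarrow(M6)$ monotonicity to conclude $L_{V_0}(Tu)\le L_{V_0}(v)$, and the injectivity argument all mirror the paper.
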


\begin{lem}\label{lem2.2ZCDO}
Assume that $\widetilde{\mathcal{S}}_{V_{0}}\neq \emptyset$. Then $\mathcal{S}_{V_{0}}\neq \emptyset$. Moreover, for any $v\in \mathcal{S}_{V_{0}}$ there exists $u\in \widetilde{\mathcal{S}}_{V_{0}}$ such that $v(x, y)=u(x/h_{v}, y/h_{v})$, where
$h_{v}^{2s}=M(\|v\|^{2}_{\x})$.
\end{lem}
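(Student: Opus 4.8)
This statement is the surjectivity complement of Lemma~\ref{lem2.1ZCDO}: together they say that $T\colon\widetilde{\mathcal{S}}_{V_{0}}\to\mathcal{S}_{V_{0}}$ is a bijection, and the proof runs parallel to the argument already carried out in the proof of Proposition~\ref{COMPs} for the subcritical problem. The plan is as follows. Since $\widetilde{\mathcal{S}}_{V_{0}}\neq\emptyset$ by hypothesis, Lemma~\ref{lem2.1ZCDO} already gives $\mathcal{S}_{V_{0}}\neq\emptyset$. Now fix $v\in\mathcal{S}_{V_{0}}$, set $h_{v}^{2s}:=M(\|v\|^{2}_{\x})$ (a well-defined number in $(0,\infty)$ by $(M1)$ and the continuity of $M$), and put $u(x,y):=v(h_{v}x,h_{v}y)$, so that $v(x,y)=u(x/h_{v},y/h_{v})$ and $\|v\|^{2}_{\x}=h_{v}^{N-2s}\|u\|^{2}_{\x}$. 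Arguing exactly as in the proof of Lemma~\ref{lem2.16FIJ}, where the rescaled function $w_{\alpha}$ was shown to solve \eqref{Palpha} (scale invariance of the equation $-\dive(y^{1-2s}\nabla w)=0$ together with the cancellation of the Kirchhoff factor), $u$ turns out to be a weak solution of \eqref{PCSchrodinger}; moreover the dilation $z\mapsto h_{v}z$ fixes the origin and does not move the maximum point of the trace, so $\max_{\R^{N}}u(\cdot,0)=u(0,0)$ and $u\in\widetilde{\mathcal{T}}_{V_{0}}$. Since $v(x,y)=u(x/h_{v},y/h_{v})$ holds by construction, the whole statement reduces to proving that $u\in\widetilde{\mathcal{S}}_{V_{0}}$, i.e. $\E_{V_{0}}(u)=\tilde{b}_{V_{0}}$.

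Recall that, by the Pohozaev identity \eqref{2.20FIJ}, every solution $w$ of \eqref{PCSchrodinger} satisfies $\E_{V_{0}}(w)=\tfrac{s}{N}\|w\|^{2}_{\x}$; hence the inequality $\E_{V_{0}}(u)\ge\tilde{b}_{V_{0}}$, which holds because $u\in\widetilde{\mathcal{T}}_{V_{0}}$, is just $\|u\|^{2}_{\x}\ge\|\phi\|^{2}_{\x}$ for any $\phi\in\widetilde{\mathcal{S}}_{V_{0}}$, and everything comes down to the reverse inequality $\|u\|^{2}_{\x}\le\|\phi\|^{2}_{\x}$. Fix $\phi\in\widetilde{\mathcal{S}}_{V_{0}}$ (nonempty by hypothesis), let $t_{\phi}:=\inf\{t>0:t^{2s}=M(t^{N-2s}\|\phi\|^{2}_{\x})\}$ (which lies in $(0,\infty)$, cf. Lemma~\ref{lem2.1ZCDO}) and $T\phi:=\phi(\cdot/t_{\phi},\cdot/t_{\phi})\in\mathcal{S}_{V_{0}}$, so that $\|T\phi\|^{2}_{\x}=t_{\phi}^{N-2s}\|\phi\|^{2}_{\x}$. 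From $(M4)$ and $\|\phi\|^{2}_{\x}\le\|u\|^{2}_{\x}$ one gets $M(h_{v}^{N-2s}\|\phi\|^{2}_{\x})\le M(h_{v}^{N-2s}\|u\|^{2}_{\x})=M(\|v\|^{2}_{\x})=h_{v}^{2s}$, so the continuous map $t\mapsto t^{2s}-M(t^{N-2s}\|\phi\|^{2}_{\x})$ is negative near $0$ and nonnegative at $t=h_{v}$, hence vanishes somewhere in $(0,h_{v}]$ and therefore $t_{\phi}\le h_{v}$. Finally, since $v$ and $T\phi$ both solve \eqref{LPC} — so $P(v)=P(T\phi)=0$, the Kirchhoff–Pohozaev identity of Lemma~\ref{lem2.11FIJ} being valid for \eqref{LPC} as well — and since $L_{V_{0}}(w)=\tfrac12[\wh{M}(\|w\|^{2}_{\x})-(1-\tfrac{2s}{N})M(\|w\|^{2}_{\x})\|w\|^{2}_{\x}]$ whenever $P(w)=0$, the equality $L_{V_{0}}(v)=b_{V_{0}}=L_{V_{0}}(T\phi)$ yields $g(\|v\|^{2}_{\x})=g(\|T\phi\|^{2}_{\x})$, where $g(t):=\wh{M}(t)-(1-\tfrac{2s}{N})M(t)t$ is nondecreasing by $(M6)$ (Lemma~\ref{lem2.17FIJ}).

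Now suppose, for contradiction, that $\|u\|^{2}_{\x}>\|\phi\|^{2}_{\x}$. Combining $t_{\phi}\le h_{v}$ with this strict inequality gives $\|T\phi\|^{2}_{\x}=t_{\phi}^{N-2s}\|\phi\|^{2}_{\x}<h_{v}^{N-2s}\|u\|^{2}_{\x}=\|v\|^{2}_{\x}$, so $g$, being nondecreasing and satisfying $g(\|T\phi\|^{2}_{\x})=g(\|v\|^{2}_{\x})$, must be constant on $I:=[\|T\phi\|^{2}_{\x},\|v\|^{2}_{\x}]$; by the equality analysis in \eqref{2.21FIJ} (see also \eqref{2.23FIJ}) this forces $M(t)=k_{0}t^{2s/(N-2s)}$ on $I$ for some $k_{0}>0$. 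Plugging the left endpoint $\|T\phi\|^{2}_{\x}$ into $t_{\phi}^{2s}=M(t_{\phi}^{N-2s}\|\phi\|^{2}_{\x})$ and the right endpoint $\|v\|^{2}_{\x}$ into $h_{v}^{2s}=M(h_{v}^{N-2s}\|u\|^{2}_{\x})$ then shows that $\|\phi\|^{2}_{\x}$ and $\|u\|^{2}_{\x}$ equal one and the same explicit power of $k_{0}$, contradicting $\|u\|^{2}_{\x}>\|\phi\|^{2}_{\x}$. Hence $\|u\|^{2}_{\x}=\|\phi\|^{2}_{\x}$, so $\E_{V_{0}}(u)=\tfrac{s}{N}\|u\|^{2}_{\x}=\tfrac{s}{N}\|\phi\|^{2}_{\x}=\tilde{b}_{V_{0}}$, i.e. $u\in\widetilde{\mathcal{S}}_{V_{0}}$, which completes the proof. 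Just as in Proposition~\ref{COMPs}, the single delicate point is this last contradiction, namely excluding that $g$ is constant on a nondegenerate interval; this is exactly where the rigidity imposed by $(M5)$/$(M6)$ — $M$ being forced to coincide with a pure power $k_{0}t^{2s/(N-2s)}$ on such an interval — comes into play.
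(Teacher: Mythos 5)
Your proposal is correct and relies on exactly the same ingredients as the paper's proof: the rescaling $u(x,y)=v(h_{v}x,h_{v}y)$ turns $v$ into a solution of \eqref{PCSchrodinger}, the Pohozaev identity reduces $\E_{V_{0}}$ to $\tfrac{s}{N}\|\cdot\|^{2}_{\x}$, the Kirchhoff–Pohozaev identity of Lemma~\ref{lem2.11FIJ} rewrites $L_{V_{0}}$ at a critical level as $\tfrac12[\wh{M}(t)-(1-\tfrac{2s}{N})M(t)t]$, and the rigidity stemming from $(M5)$ (via the equality case of \eqref{2.21FIJ}) forces $M$ to be a pure power on any interval where this quantity is constant. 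The difference is purely one of organization: you run a contradiction argument, first establishing $t_{\phi}\le h_{v}$ via $(M4)$ and the intermediate value theorem, so as to order $\|T\phi\|^{2}_{\x}$ and $\|v\|^{2}_{\x}$, then extracting $\|u\|^{2}_{\x}=\|\phi\|^{2}_{\x}=k_{0}^{-(N-2s)/(2s)}$ from the pure-power form of $M$. The paper instead computes directly, observing that the rigidity yields $M(\|v\|^{2}_{\x})/(\|v\|^{2}_{\x})^{2s/(N-2s)}=M(\|\tilde{v}\|^{2}_{\x})/(\|\tilde{v}\|^{2}_{\x})^{2s/(N-2s)}$ regardless of which norm is larger, and that the latter ratio equals $(\|\tilde{u}\|^{2}_{\x})^{-2s/(N-2s)}$ by the defining relation for $t_{\tilde u}$; feeding this into the Pohozaev expression for $\E_{V_{0}}(u)$ gives $\E_{V_{0}}(u)=\tfrac{s}{N}\|\tilde{u}\|^{2}_{\x}=\tilde b_{V_{0}}$ in one stroke. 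The direct route avoids both the contradiction hypothesis and the auxiliary step $t_{\phi}\le h_{v}$; your version, though a bit longer, has the advantage of making the rigidity's role (ruling out $M$ being a pure power on a nondegenerate interval with the given endpoint constraints) very explicit.
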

\begin{proof}
By the definition of $T$, we know that $\mathcal{S}_{V_{0}}\neq \emptyset$ if $\widetilde{\mathcal{S}}_{V_{0}}\neq \emptyset$. Let $v\in \mathcal{S}_{V_{0}}$. Thus $v$ satisfies \eqref{LPC} and $L_{V_{0}}(v)=b_{V_{0}}$. 
Define $u(x, y):=v(h x, h y)$ where $h^{2s}:=M(\|v\|^{2}_{\x})$. Then, $u$ solves \eqref{PCSchrodinger}. Now, we show that $u\in \widetilde{\mathcal{S}}_{V_{0}}$. To do this, we prove that $\E_{V_{0}}(u)=\tilde{b}_{V_{0}}$. Using the Pohozaev identity, we know that
$$
\E_{V_{0}}(u)=\frac{s}{N} \left[\frac{M(\|v\|^{2}_{\x})}{(\|v\|^{2}_{\x})^{\frac{2s}{N-2s}}}\right]^{\frac{2s-N}{2s}}.
$$
Let $\tilde{u}\in \widetilde{\mathcal{S}}_{V_{0}}$. Then $\tilde{v}:=T\tilde{u}=u(x/t_{\tilde{u}}, y/t_{\tilde{u}})\in \mathcal{S}_{V_{0}}$, where $t_{\tilde{u}}$ is defined as in Lemma \ref{lem2.1ZCDO}. By Lemma \ref{lem2.11FIJ} (which holds even if replace $(f_2)$-$(f_3)$ by $(f'_2)$-$(f'_3)$), we obtain that
\begin{align}\begin{split}\label{2.3ZCDO}
L_{V_{0}}(\tilde{v})&=\frac{1}{2}\left[\wh{M}(\|\tilde{v}\|^{2}_{\x})-\left(1-\frac{2s}{N}\right)M(\|\tilde{v}\|^{2}_{\x})\|\tilde{v}\|^{2}_{\x}\right]=b_{V_{0}}  \\
L_{V_{0}}(v)&=\frac{1}{2}\left[\wh{M}(\|v\|^{2}_{\x})-\left(1-\frac{2s}{N}\right)M(\|v\|^{2}_{\x})\|v\|^{2}_{\x}\right]=b_{V_{0}}. 
\end{split}\end{align}
On the other hand, by the proof of Lemma \ref{lem2.17FIJ} and $(M5)$, it is easy to see that if for some $0\leq t_{1}<t_{2}$ it holds
\begin{align*}
\wh{M}(t_{1}) - \left(1- \frac{2s}{N}\right) M(t_{1}) t_{1}= \wh{M}(t_{2}) - \left(1- \frac{2s}{N}\right) M(t_{2}) t_{2}
\end{align*} 
then 
\begin{align*}
\frac{M(t_{1})}{t_{1}^{2s/(N-2s)}} = \frac{M(t_{2})}{t_{2}^{2s/(N-2s)}}. 
\end{align*}
Hence, by \eqref{2.3ZCDO}, it follows that
$$
\E_{V_{0}}(u)=\frac{s}{N}  \left[\frac{M(\|v\|^{2}_{\x})}{(\|v\|^{2}_{\x})^{\frac{2s}{N-2s}}}\right]^{\frac{2s-N}{2s}}=\frac{s}{N} \|\tilde{u}\|^{2}_{\x}=\tilde{b}_{V_{0}}
$$
that is $u\in \widetilde{\mathcal{S}}_{V_{0}}$.
\end{proof}

\begin{lem}\label{lem2.3ZCDO}
Assume that $\widetilde{\mathcal{S}}_{V_{0}}\neq \emptyset$. Then there exist $C, c>0$ (independent of $v$)  
such that $c\leq h_{v}\leq C$ for all $v\in \mathcal{S}_{V_{0}}$, where $h_{v}$ is given in Lemma \ref{lem2.2ZCDO}.
\end{lem}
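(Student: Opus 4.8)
The plan is to treat the two inequalities separately, the lower bound being immediate and the upper bound being the real content. For every $v\in\mathcal{S}_{V_{0}}$ one has $h_{v}^{2s}=M(\|v\|^{2}_{\x})\geq m_{0}$ by $(M1)$, hence $h_{v}\geq m_{0}^{1/(2s)}=:c>0$. For the uniform upper bound I would exploit that, under the running hypothesis $\widetilde{\mathcal{S}}_{V_{0}}\neq\emptyset$, the map $T$ of Lemma \ref{lem2.1ZCDO} together with its inverse described in Lemma \ref{lem2.2ZCDO} ties $\mathcal{S}_{V_{0}}$ to the rescaled set $\widetilde{\mathcal{S}}_{V_{0}}$, whose elements all carry the \emph{same} $\x$-norm.

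First I would prove that $\|u\|^{2}_{\x}$ is constant over $u\in\widetilde{\mathcal{S}}_{V_{0}}$. Indeed, any $u\in\widetilde{\mathcal{S}}_{V_{0}}$ solves \eqref{PCSchrodinger}, hence satisfies the Pohozaev identity \eqref{2.20FIJ}, which gives $\int_{\R^{N}}F(u(x,0))-\frac{V_{0}}{2}u^{2}(x,0)\,dx=\frac{N-2s}{2N}\|u\|^{2}_{\x}$; plugging this into the definition of $\E_{V_{0}}$ yields $\E_{V_{0}}(u)=\frac{s}{N}\|u\|^{2}_{\x}$. Since $\E_{V_{0}}(u)=\tilde{b}_{V_{0}}$, we conclude $\|u\|^{2}_{\x}=\frac{N}{s}\tilde{b}_{V_{0}}=:\rho_{0}^{2}$ for every $u\in\widetilde{\mathcal{S}}_{V_{0}}$, and $\rho_{0}>0$ because $u\neq0$.

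Now fix $v\in\mathcal{S}_{V_{0}}$ and let $u\in\widetilde{\mathcal{S}}_{V_{0}}$ be as in Lemma \ref{lem2.2ZCDO}, so that $v(x,y)=u(x/h_{v},y/h_{v})$ with $h_{v}^{2s}=M(\|v\|^{2}_{\x})$. A change of variables in the weighted Dirichlet integral gives $\|v\|^{2}_{\x}=h_{v}^{N-2s}\|u\|^{2}_{\x}=h_{v}^{N-2s}\rho_{0}^{2}$, so that $h_{v}$ solves the scalar equation $h_{v}^{2s}=M(h_{v}^{N-2s}\rho_{0}^{2})$. By $(M3)$ there is $T_{0}>0$ such that $M(t)\leq\tfrac12\rho_{0}^{-\frac{4s}{N-2s}}\,t^{\frac{2s}{N-2s}}$ for all $t\geq T_{0}$; hence, if $h_{v}^{N-2s}\rho_{0}^{2}\geq T_{0}$, then
$$
h_{v}^{2s}=M(h_{v}^{N-2s}\rho_{0}^{2})\leq \tfrac12\,\rho_{0}^{-\frac{4s}{N-2s}}\big(h_{v}^{N-2s}\rho_{0}^{2}\big)^{\frac{2s}{N-2s}}=\tfrac12\,h_{v}^{2s},
$$
which is absurd. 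Therefore $h_{v}^{N-2s}\rho_{0}^{2}<T_{0}$, i.e. $h_{v}<\big(T_{0}\rho_{0}^{-2}\big)^{\frac{1}{N-2s}}=:C$, uniformly in $v$. The only step requiring care is the invariance of $\|\cdot\|_{\x}$ on $\widetilde{\mathcal{S}}_{V_{0}}$, which rests on the Pohozaev identity holding for every least energy solution of \eqref{PCSchrodinger} (hence on the regularity of $f$ assumed in Theorem \ref{thm2}); the remainder is elementary bookkeeping with $(M1)$ and $(M3)$.
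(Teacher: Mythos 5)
Your proof is correct, but it takes a genuinely different route to the upper bound. You first observe that the Pohozaev identity \eqref{2.20FIJ} for the pure Schr\"odinger problem forces $\E_{V_{0}}(u)=\frac{s}{N}\|u\|^{2}_{\x}$, so that $\|u\|^{2}_{\x}\equiv\frac{N}{s}\tilde{b}_{V_{0}}=:\rho_{0}^{2}$ is \emph{constant} over $\widetilde{\mathcal{S}}_{V_{0}}$; then, via the scaling $\|v\|^{2}_{\x}=h_{v}^{N-2s}\rho_{0}^{2}$ from Lemma \ref{lem2.2ZCDO}, the defining equation becomes the scalar relation $h_{v}^{2s}=M(h_{v}^{N-2s}\rho_{0}^{2})$, and $(M3)$ (the sublinear growth $M(t)/t^{\frac{2s}{N-2s}}\ri 0$) rules out large $h_{v}$ by a self-improving contradiction. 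The paper instead invokes the Kirchhoff Pohozaev identity (Lemma \ref{lem2.11FIJ}) to write $b_{V_{0}}=\frac{1}{2}\bigl[\wh{M}(\|v\|^{2}_{\x})-(1-\tfrac{2s}{N})M(\|v\|^{2}_{\x})\|v\|^{2}_{\x}\bigr]$ and then uses $(M2)$ (the divergence of $\wh{M}(t)-(1-\tfrac{2s}{N})M(t)t$) to bound $\|v\|^{2}_{\x}$ uniformly, after which $h_{v}^{2s}=M(\|v\|^{2}_{\x})$ is bounded since $M$ is nondecreasing and continuous. Both arguments rest on a Pohozaev identity and are comparably short; yours leans on $(M3)$ and the rigidity of $\widetilde{\mathcal{S}}_{V_{0}}$, while the paper's leans on $(M2)$ applied directly to $\mathcal{S}_{V_{0}}$. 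Since all of $(M1)$--$(M5)$ are in force, either choice is legitimate, and your variant has the mild advantage of not needing the boundedness of $\|v\|^{2}_{\x}$ as an intermediate step.
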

\begin{proof}
Fix $v\in \mathcal{S}_{V_{0}}$. Then $h_{v}^{2s}=M(\|v\|^{2}_{\x})$. From $(M1)$ we have that $h_{v}^{2s}\geq m_{0}$. On the other hand, by Lemma \ref{lem2.11FIJ}, we see that for all $v\in \mathcal{S}_{V_{0}}$,
$$
L_{V_{0}}(v)=\frac{1}{2}\left[\wh{M}(\|v\|^{2}_{\x})-\left(1-\frac{2s}{N}\right)M(\|v\|^{2}_{\x})\|v\|^{2}_{\x}\right]=b_{V_{0}}.
$$ 
Thus, in view of $(M2)$, we infer that $\sup_{v\in \mathcal{S}_{V_{0}}} h_{v}<\infty$.
\end{proof}

\noindent
Now, we recall the following result (see \cite{Adie, He, JLZ}):
\begin{lem}\label{lem2.4ZCDO}
Assume that $(f_1)$, $(f'_2)$-$(f'_3)$ hold true. Then:
\begin{compactenum}[$(i)$]
\item there exists $u\in \widetilde{\mathcal{S}}_{V_{0}}$ such that $u(\cdot, 0)\in C^{1}(\R^{N})\cap L^{\infty}(\R^{N})$ and radially symmetric;
\item $\widetilde{\mathcal{S}}_{V_{0}}$ is compact in $\X$.
\end{compactenum}
\end{lem}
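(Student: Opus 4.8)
The plan is to treat the present statement as the fractional, ``extended'' version of the critical Berestycki--Lions theorem, following the scheme of \cite{Adie, He, JLZ} and reusing, with $M\equiv 1$, the machinery already developed in Section~3.

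\emph{Part $(i)$.} First I would produce a ground state of \eqref{PCSchrodinger}. A convenient route is to run the argument of Proposition~\ref{prop3.4HLP} for the functional $\E_{V_0}$, obtaining a \emph{radial} Pohozaev--Palais--Smale sequence $(w_n)\subset\Xr$ with $\E_{V_0}(w_n)\to c_{V_0}$, $\E'_{V_0}(w_n)\to0$ and $P(w_n)\to0$, where $c_{V_0}$ is the mountain pass level of $\E_{V_0}$ and $P$ now denotes the Pohozaev functional associated with \eqref{PCSchrodinger} (the left-hand side of \eqref{2.20FIJ}); by the Pohozaev characterisation used in the proof of Lemma~\ref{Final}, $c_{V_0}=\tilde b_{V_0}$. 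As in Lemma~\ref{lem3.6HLP}, the identity $\E_{V_0}(w_n)-\tfrac1N P(w_n)=\tfrac sN\|w_n\|^2_{\x}$ together with $\langle\E'_{V_0}(w_n),w_n\rangle=o_n(1)$ and $(f_1)$, $(f'_2)$ gives the boundedness of $(w_n)$ in $\X$; let $w_n\rightharpoonup w$ in $\Xr$. The decisive point is the strict level estimate
$$
c_{V_0}=\tilde b_{V_0}<\frac sN\,S(s,N)^{\frac{N}{2s}},
$$
which I would prove by testing $c_{V_0}$ along a path through a suitably rescaled extremal of the Sobolev inequality of Lemma~\ref{Sobolev} (the fractional Talenti-type bubble) and absorbing the lower order remainders by means of the lower bound $f(t)\ge t^{\2-1}+\lambda t^{p-1}$ in $(f'_3)$; the three admissible ranges for $(p,\lambda)$ are precisely those under which this Brezis--Nirenberg computation goes through. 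Granted this, a Brezis--Lieb splitting excludes the escape of a Sobolev bubble in the weak limit (a bubble would cost energy $\ge\tfrac sN S(s,N)^{N/(2s)}>c_{V_0}$), so $w_n\to w\ne0$ strongly in $\X$, and $w$ is a nontrivial solution of \eqref{PCSchrodinger} with $\E_{V_0}(w)=\tilde b_{V_0}$. Choosing $w$ radially decreasing (which comes for free from the equivalent Berestycki--Lions constrained minimisation, via Schwarz symmetrisation) forces $\max_{\R^N}w(\cdot,0)=w(0,0)$, i.e.\ $w\in\widetilde{\mathcal{S}}_{V_0}$. Finally $(f_1)$, $(f'_2)$ and a Moser iteration give $w(\cdot,0)\in L^\infty(\R^N)$, and the local H\"older assumption on $f$ together with the regularity theory in \cite{Silvestre, JLX, CSire} upgrades this to $w(\cdot,0)\in C^1(\R^N)$.

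\emph{Part $(ii)$.} Given $(u_n)\subset\widetilde{\mathcal{S}}_{V_0}$, the Pohozaev identity \eqref{2.20FIJ} yields $\tfrac sN\|u_n\|^2_{\x}=\E_{V_0}(u_n)=\tilde b_{V_0}$, so $\|u_n\|_{\x}$ is constant, and the Nehari identity $\langle\E'_{V_0}(u_n),u_n\rangle=0$ with $f(t)t\le\varepsilon t^2+C_\varepsilon t^{\2}$ and Lemma~\ref{Sobolev} bounds $|u_n(\cdot,0)|_2$; hence $(u_n)$ is bounded in $\X$ and, up to a subsequence, $u_n\rightharpoonup u$ in $\X$. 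The uniform energy bound feeds a Moser iteration producing a \emph{uniform} $L^\infty$ bound on $u_n(\cdot,0)$, hence uniform local $C^1$ estimates, and --- since $V_0>0$ and $\max_{\R^N}u_n(\cdot,0)=u_n(0,0)$ --- a uniform decay estimate $u_n(x,0)\le C(1+|x|)^{-(N+2s)}$; therefore $u_n(\cdot,0)\to u(\cdot,0)$ in $C_{loc}(\R^N)$ and in $L^q(\R^N)$ for every $q\in[2,\2]$, and $u(0,0)=\lim_n|u_n(\cdot,0)|_\infty$. If $u\equiv0$, then $|u_n(\cdot,0)|_\infty\to0$ forces $\int_{\R^N}f(u_n(x,0))u_n(x,0)\,dx\to0$ and so $\|u_n\|_{\x}\to0$, contradicting $\|u_n\|^2_{\x}=N\tilde b_{V_0}/s>0$; hence $u\ne0$, $u$ solves \eqref{PCSchrodinger} and $\E_{V_0}(u)\ge\tilde b_{V_0}$. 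Weak lower semicontinuity gives $\E_{V_0}(u)=\tfrac sN\|u\|^2_{\x}\le\tfrac sN\liminf_n\|u_n\|^2_{\x}=\tilde b_{V_0}$, hence $\E_{V_0}(u)=\tilde b_{V_0}$ and $\|u_n\|_{\x}\to\|u\|_{\x}$; since $(\x,\|\cdot\|_{\x})$ is a Hilbert space this upgrades $u_n\rightharpoonup u$ to strong convergence in $\x$, and combined with the $L^2$-convergence of the traces (obtained from the Nehari identities and $\int_{\R^N} f(u_n(x,0))u_n(x,0)\,dx\to\int_{\R^N} f(u(x,0))u(x,0)\,dx$) this yields $u_n\to u$ in $\X$. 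Passing to the limit in $|u_n(\cdot,0)|_\infty=u_n(0,0)$ gives $|u(\cdot,0)|_\infty=u(0,0)$, so $u\in\widetilde{\mathcal{T}}_{V_0}$, and with $\E_{V_0}(u)=\tilde b_{V_0}$ we conclude $u\in\widetilde{\mathcal{S}}_{V_0}$; thus $\widetilde{\mathcal{S}}_{V_0}$ is compact in $\X$.

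I expect the whole difficulty to come from the critical exponent. In $(i)$ the obstacle is securing the strict inequality $\tilde b_{V_0}<\tfrac sN S(s,N)^{N/(2s)}$: this is exactly where $(f'_3)$, with its case distinction on $N$ versus $4s$, enters, and it is the only place where the behaviour of $f$ beyond $(f_1)$--$(f'_2)$ is really used. In $(ii)$ the obstacle is ruling out concentration at the critical scale in the weak limit, which again reduces to the fact that every nontrivial solution of \eqref{PCSchrodinger} has energy at least $\tilde b_{V_0}$, strictly below the bubble energy. Everything else --- boundedness, the Pohozaev manipulations, regularity, and the identification of the weak limit --- is routine and mirrors Section~3.
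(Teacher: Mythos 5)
The paper does not supply a proof for this lemma; it simply cites the references \cite{Adie, He, JLZ}, so you are filling in a genuine gap, and the overall scheme you propose (radial Pohozaev--Palais--Smale sequence, Brezis--Nirenberg level estimate via a Talenti-type bubble, exclusion of concentration by comparison with the bubble energy $\tfrac sN S(s,N)^{N/(2s)}$, then Moser iteration and Harnack for regularity) is the standard one and matches what appears in those references. Part $(i)$ is correct in outline; the real work is indeed the strict inequality $\tilde b_{V_0}<\tfrac sN S(s,N)^{N/(2s)}$, and your description of how $(f'_3)$ enters is accurate.

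For part $(ii)$, however, there is a genuine logical gap. You claim that ``the uniform energy bound feeds a Moser iteration producing a \emph{uniform} $L^\infty$ bound on $u_n(\cdot,0)$,'' and you build the rest of the argument (uniform decay, $C_{loc}$ convergence, $L^q$ convergence for $q\in[2,2^*_s]$, exclusion of $u\equiv0$) on top of that. This step is not justified: for a sequence of solutions to a critical equation, Moser (Brezis--Kato) iteration yields $L^\infty$ bounds only with constants that depend on the modulus of uniform integrability of $|u_n(\cdot,0)|^{2^*_s}$, i.e.\ it does not by itself rule out bubbling. The Aubin--Talenti family shows that a uniformly bounded $\x$-norm is perfectly compatible with $|u_n(\cdot,0)|_\infty\to\infty$. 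What actually prevents concentration here is the Pohozaev constraint $\|u_n\|^2_{\x}=N\tilde b_{V_0}/s$ combined with the strict level estimate $\tilde b_{V_0}<\tfrac sN S(s,N)^{N/(2s)}$: a bubble would require at least $S(s,N)^{N/(2s)}>N\tilde b_{V_0}/s$ units of $\x$-energy, which is more than the whole budget. You do acknowledge this at the very end (``ruling out concentration ... strictly below the bubble energy''), but it must come \emph{before} the Moser step, not after. The correct order is: weak convergence; concentration--compactness (or Brezis--Lieb) plus the level estimate to exclude bubbling and obtain strong $L^{2^*_s}$-convergence of traces; conclude strong convergence in $\X$ as you do; and only then upgrade to uniform pointwise decay if needed. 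As written, the chain from ``bounded in $\X$'' to ``uniform $L^\infty$'' is circular.
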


\noindent
As a consequence of Lemma \ref{lem2.2ZCDO}, Lemma \ref{lem2.3ZCDO} and Lemma \ref{lem2.4ZCDO}, we obtain that:
\begin{prop}\label{prop2.1ZCDO}
Under the assumptions of Theorem \ref{thm2} we have that:
\begin{compactenum}[$(i)$]
\item there exists $u\in \mathcal{S}_{V_{0}}$ such that $u(\cdot, 0)\in C^{1}(\R^{N})\cap L^{\infty}(\R^{N})$ and radially symmetric;
\item $\mathcal{S}_{V_{0}}$ is compact in $\X$.
\end{compactenum}
\end{prop}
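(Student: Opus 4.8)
The plan is to derive both assertions from Lemma \ref{lem2.1ZCDO}, Lemma \ref{lem2.2ZCDO} and Lemma \ref{lem2.4ZCDO} by transporting the information on $\widetilde{\mathcal{S}}_{V_{0}}$ through the dilation correspondence $T$, the only genuinely analytic ingredient being the compactness of $\widetilde{\mathcal{S}}_{V_{0}}$ in Lemma \ref{lem2.4ZCDO}(ii); the reason this works is that, by Lemma \ref{lem2.3ZCDO}, every dilation parameter that occurs is confined to a fixed compact subinterval $[c,C]\subset(0,\infty)$. Throughout, for $h>0$ and $w\in\X$ I write $w_{h}(x,y):=w(x/h,y/h)$; recall $\|w_{h}\|^{2}_{\x}=h^{N-2s}\|w\|^{2}_{\x}$, and that the trace term of the $\X$-norm scales by the further positive power $h^{N}$, so both parts of $\|\cdot\|_{\X}$ scale by positive powers of $h$.

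For part (i) I would start from Lemma \ref{lem2.4ZCDO}(i), which gives $\phi\in\widetilde{\mathcal{S}}_{V_{0}}$ with $\phi(\cdot,0)\in C^{1}(\R^{N})\cap L^{\infty}(\R^{N})$ radially symmetric; since $\phi\in\widetilde{\mathcal{T}}_{V_{0}}$ its trace attains its maximum at the origin. Applying the injective map $T$ of Lemma \ref{lem2.1ZCDO} produces $u:=T\phi=\phi_{t_{\phi}}\in\mathcal{S}_{V_{0}}$, and because $t_{\phi}\in(0,\infty)$ and $u$ is obtained from $\phi$ by the pure spatial dilation $x\mapsto x/t_{\phi}$, the trace $u(\cdot,0)=\phi(\cdot/t_{\phi},0)$ remains of class $C^{1}\cap L^{\infty}$, radially symmetric, and with maximum at the origin. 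This yields (i).

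For part (ii) I would argue by sequential compactness. Take $(v_{n})\subset\mathcal{S}_{V_{0}}$. By Lemma \ref{lem2.2ZCDO} there are $u_{n}\in\widetilde{\mathcal{S}}_{V_{0}}$ with $v_{n}=(u_{n})_{h_{v_{n}}}$ and $h_{v_{n}}^{2s}=M(\|v_{n}\|^{2}_{\x})$, and by Lemma \ref{lem2.3ZCDO} there are $0<c\le C$ independent of $n$ with $c\le h_{v_{n}}\le C$. By Lemma \ref{lem2.4ZCDO}(ii), after passing to a subsequence $u_{n}\to u$ in $\X$ with $u\in\widetilde{\mathcal{S}}_{V_{0}}$, and after a further subsequence $h_{v_{n}}\to h_{0}\in[c,C]$. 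I then set $v:=u_{h_{0}}$ and claim $v_{n}\to v$ in $\X$ with $v\in\mathcal{S}_{V_{0}}$. For the convergence I would split
\[
\|v_{n}-v\|_{\X}\le\|(u_{n}-u)_{h_{v_{n}}}\|_{\X}+\|u_{h_{v_{n}}}-u_{h_{0}}\|_{\X},
\]
bounding the first summand by a constant depending only on $c,C$ times $\|u_{n}-u\|_{\X}\to0$ (each part of the $\X$-norm scales by a power of $h_{v_{n}}\in[c,C]$), and sending the second to $0$ via continuity of dilations in $\X$ -- first for $u\in C^{\infty}_{c}(\overline{\R^{N+1}_{+}})$ and then by density. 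For the membership, passing to the limit in $h_{v_{n}}^{2s}=M(\|v_{n}\|^{2}_{\x})$, with $\|v_{n}\|_{\x}\to\|v\|_{\x}$ and $M$ continuous, gives $h_{0}^{2s}=M(\|v\|^{2}_{\x})$; since $v$ is the dilation by exactly this factor of the solution $u$ of \eqref{PCSchrodinger}, the rescaling correspondence of Lemma \ref{lem2.2ZCDO} (equivalently, $v=Tu$ through Lemma \ref{lem2.1ZCDO}) makes $v$ a solution of \eqref{LPC}; moreover $v\neq0$, $v(\cdot,0)$ attains its maximum at the origin, and $L_{V_{0}}(v)=\lim_{n}L_{V_{0}}(v_{n})=b_{V_{0}}$ by continuity of $L_{V_{0}}$. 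Hence $v\in\mathcal{S}_{V_{0}}$, and $\mathcal{S}_{V_{0}}$ is compact.

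The step I expect to be the main obstacle is precisely the joint passage to the limit $(u_{n})_{h_{v_{n}}}\to u_{h_{0}}$ in $\X$: since $\X$ is not dilation invariant and $w\mapsto w_{h}$ is not an isometry, one cannot pass to the limit separately in the function and in the parameter, and it is the uniform two-sided bound $c\le h_{v_{n}}\le C$ from Lemma \ref{lem2.3ZCDO} that makes $w\mapsto w_{h}$ depend continuously on $(w,h)\in\X\times[c,C]$, uniformly for $w$ in bounded sets. Everything else is a routine consequence of Lemmas \ref{lem2.1ZCDO}--\ref{lem2.4ZCDO} together with the $C^{1}$ regularity of $L_{V_{0}}$.
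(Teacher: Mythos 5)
Your proposal is correct and follows exactly the route the paper intends: the paper states Proposition \ref{prop2.1ZCDO} as an immediate consequence of Lemmas \ref{lem2.2ZCDO}, \ref{lem2.3ZCDO} and \ref{lem2.4ZCDO} without writing out the details, and what you have written is precisely the argument those lemmas are designed to support — transporting regularity through $T$ for part (i), and for part (ii) using the uniform bounds $c\le h_{v_n}\le C$ from Lemma \ref{lem2.3ZCDO} together with the compactness of $\widetilde{\mathcal{S}}_{V_0}$ to pass to the limit in $(u_n)_{h_{v_n}}$. Your scaling computations for the two pieces of the $\X$-norm, the identification $h_0^{2s}=M(\|v\|^2_{\x})$ in the limit, and the observation that this is exactly the condition making the rescaled limit $v=u_{h_0}$ solve \eqref{LPC}, are all correct.
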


\section{Proof of Theorem \ref{thm1}}
In light of Section $2$, to study \eqref{EP} we look for critical points of the functional $I_{\e}: X_{\e}\ri \R$ defined as
$$
I_{\e}(u)=\frac{1}{2}\wh{M}\left(\|u\|^{2}_{X^{s}(\R^{N+1}_{+})}\right)+\frac{1}{2}\int_{\R^{N}} V_{\e}(x) u^{2}(x, 0)\, dx-\int_{\R^{N}} F(u(x,0))\, dx
$$
where
$$
X_{\e}:=\left\{u\in X^{1,s}(\R^{N+1}_{+}): \int_{\R^{N}} V_{\e}(x) u^{2}(x, 0)\, dx<\infty\right\}
$$
endowed with the norm
$$
\|u\|_{\e}:=\left(\|u\|^{2}_{\x}+ \int_{\R^{N}} V_{\e}(x) u^{2}(x, 0)\, dx \right)^{\frac{1}{2}}.
$$
It follows from $(V_1)$ that $X_{\e}\subset X^{1,s}(\R^{N+1}_{+})$ and
$$
\|u\|_{X^{1,s}(\R^{N+1}_{+})}^{2}\leq \max\{1, V_{1}^{-1}\} \|u\|^{2}_{\e} \quad \forall u\in X_{\e}.
$$
We denote by $(X_{\e})^{-1}$ the dual space of $X_{\e}$ endowed with the norm $\|T\|_{(X_{\e})^{-1}}:=\sup\{Tu: u\in X_{\e}, \|u\|_{\e}\leq 1\}$.\\
In order to obtain some convergence results and consequently results of existence for small $\e>0$, we need to modify $f(t)$ once more. Namely, as in \cite{DF, Gloss}, we consider the following Carath\'eodory function
$$
g(x, t):=\chi_{\Lambda}(x) f(t)+(1-\chi_{\Lambda}(x)) \wh{f}(t) \quad \mbox{ for } (x,t)\in \R^{N}\times \R,
$$
and we write $G(x,t):=\int_{0}^{t} g(x, \tau)\, d\tau$, where $\chi_{\Lambda}$ denotes the characteristic function of $\Lambda$, and
\begin{equation*}
\wh{f}(t):=\left\{
\begin{array}{ll}
f(t) &\mbox{ for } t<a, \\
\min\{f(t), \frac{V_{1}}{2} t\}  &\mbox{ for } t\geq a,
\end{array}
\right.
\end{equation*}
where $a\in (0, \tau_{0})$ is such that $|f(t)|\leq \frac{V_{1}}{2} t$ for $t\in (0, a]$.
By $(f_1)$-$(f_2)$, it is easy to check that:
\begin{itemize}
\item $\lim_{t\ri 0} \frac{g(x,t)}{t}=\lim_{t\ri 0} \frac{f(t)}{t}=0$ uniformly in $x\in \R^{N}$,
\item $\limsup_{t\ri \infty} \frac{g(x,t)}{t^{p}}\leq \limsup_{t\ri \infty} \frac{f(t)}{t^p}<\infty$, for all $x\in \R^{N}$.
\end{itemize}

\noindent
Therefore, we consider the following modified problem:
\begin{align}\label{MEP}
\left\{
\begin{array}{ll}
-\dive(y^{1-2s} \nabla u)=0 &\mbox{ in } \R^{N+1}_{+}, \\
\frac{1}{M(\|u\|^{2}_{X^{s}(\R^{N+1}_{+})})} \frac{\partial u}{\partial \nu^{1-2s}}=-V_{\e} u(\cdot, 0)+g_{\e}(\cdot, u(\cdot, 0)) &\mbox{ in } \R^{N},
\end{array}
\right.
\end{align}
where we set $g_{\e}(x,t):=g(\e x, t)$.  Obviously, if $u_{\e}$ is a positive solution of \eqref{MEP} satisfying $u_{\e}(x,0)\leq a$ for $x\in \R^{N}\setminus \Lambda_{\e}$, then $u_{\e}$ is indeed a solution of \eqref{EP}.
Now, inspired by \cite{BJ, BW, FIJ, Gloss}, we define
$$
J_{\e}(u):=P_{\e}(u)+Q_{\e}(u)
$$
where
$$
P_{\e}(u):=\frac{1}{2}\wh{M}\left(\|u\|^{2}_{X^{s}(\R^{N+1}_{+})}\right)+\frac{1}{2}\int_{\R^{N}} V_{\e}(x) u^{2}(x,0)\, dx-\int_{\R^{N}} G_{\e}(x,u(x,0))\, dx
$$
and
$$
Q_{\e}(u):=\left( \int_{\R^{N}} \chi_{\e}(x) u^{2}(x,0)\, dx-1 \right)_{+}^{2}
$$
with
\begin{equation*}
\chi_{\e}(x):=\left\{
\begin{array}{ll}
0 &\mbox{ if } x\in \Lambda_{\e}:=\displaystyle{\frac{\Lambda}{\e}}, \\
\e^{-1}   &\mbox{ if } x\notin \Lambda_{\e}.
\end{array}
\right.
\end{equation*}
The functional $Q_{\e}$ will act as a penalization to force the concentration phenomena to occur inside $\Lambda$. This type of penalization was first introduced in \cite{BW}.
Clearly, $J_{\e}\in C^{1}(X_{\e}, \R)$ and its differential is given by:
\begin{align*}
\langle J'_{\e}(u), v\rangle&=M(\|u\|^{2}_{\x})\iint_{\R^{N+1}_{+}} y^{1-2s} \nabla u\nabla v\, dx dy+\int_{\R^{N}} V_{\e}(x) u(x, 0) v(x, 0)\, dx\\
&\quad -\int_{\R^{N}} g_{\e}(x, u(x, 0)) v(x, 0)\, dx+4\left( \int_{\R^{N}} \chi_{\e}(x) u^{2}(x,0)\, dx-1 \right)_{+}^{2} \int_{\R^{N}} \chi_{\e}(x) u(x,0) v(x,0)\, dx
\end{align*}
for all $u, v\in X_{\e}$. We stress that a critical point of $P_{\e}$ is a weak  solution to \eqref{MEP}.
In order to find solutions concentrating in $\Lambda$ as $\e\ri 0$, we look for critical points of $J_{\e}$ for which $Q_{\e}$ is zero.

Let $\displaystyle{\delta:=\frac{1}{10}\dist\{\M, \R^{N}\setminus \Lambda\}}$. By $(f_3)$ we can choose $\beta\in (0, \delta)$ sufficiently small such that
\begin{equation}\label{4.3G}
F(T)>\frac{V(x)}{2} T^{2} \quad \mbox{ for all } x\in \M^{5\beta},
\end{equation}
where 
$$
\M^{\beta}:=\{z\in \R^{N}: \inf_{w\in \M} |z-w|\leq \beta\}.
$$
Define a nonincreasing function $\phi_{0}\in C^{\infty}(\R_{+})$ such that $0\leq \phi\leq 1$, $\phi_{0}=1$ in $[0, 1]$, $\phi_{0}=0$ in $[2, \infty)$ and $|\phi'_{0}|_{\infty}\leq C$.
In what follows, we look for solutions to \eqref{MEP} near the set
$$
E_{\e}:=\Bigl\{\phi_{0}(\sqrt{|\e x-x'|^{2}+\e^{2}y^{2}}/\beta) W(x-(x'/\e), y): x'\in \M^{\beta}, W\in \mathcal{S}_{V_{0}}\Bigr\}.
$$
Fix $W^{*}\in \mathcal{S}_{V_{0}}$ and define for $t>0$ and $(x, y)\in \R^{N+1}_{+}$
$$
W_{\e,t}(x,y):=\phi_{0}\left(\frac{\e}{\beta}\sqrt{|x|^{2}+y^{2}}\right) W^{*}\left(\frac{x}{t}, \frac{y}{t}\right).
$$ 
Next we show that $J_{\e}$ has a mountain pass geometry \cite{AR}.
Indeed, by $(M1)$, $(V_1)$, $(f_1)$, $(f_2)$ and ${\rm Tr}(X_{\e})\subset L^{q}(\R^{N})$ for all $q\in [2, \2]$, we have
\begin{align*}
J_{\e}(u)&\geq \frac{m_{0}}{2} \|u\|^{2}_{X^{s}(\R^{N+1}_{+})}+\frac{1}{2}\int_{\R^{N}} V_{\e}(x) u^{2}(x,0)\, dx-\e |u(\cdot, 0)|_{2}^{2}-C_{\e} |u(\cdot, 0)|_{\2}^{\2} \\
&\geq c_{1} \|u\|_{\e}^{2}-c_{2}  \|u\|_{\e}^{\2}.
\end{align*} 
Hence, there exist $\rho, \delta>0$ such that $J_{\e}(u)\geq \delta$ for $\|u\|_{\e}=\rho$. \\
On the other hand, using the fact that $W^{*}$ satisfies the Pohozaev identity and $(M3)$, we have
\begin{align*}
&L_{V_{0}}\left(W^{*}\left(\frac{\cdot}{t}, \frac{\cdot}{t}\right)\right)\\
&=\frac{t^{N}}{2}\left[\frac{1}{t^{N}}\wh{M}\left(t^{N-2s} \|W^{*}\|^{2}_{X^{s}(\R^{N+1}_{+})}\right)- \left(\frac{N-2s}{N} \right) M\left( \|W^{*}\|^{2}_{X^{s}(\R^{N+1}_{+})}\right)  \|W^{*}\|^{2}_{X^{s}(\R^{N+1}_{+})}  \right]\ri -\infty,
\end{align*}
as $t\ri \infty$. Then there exists $t_{0}>0$ such that 
\begin{align}\label{4.2G}
L_{V_{0}}\left(W^{*}\left(\frac{\cdot}{t}, \frac{\cdot}{t}\right)\right)<-2 \quad \forall t\geq t_{0}.
\end{align}

\noindent
Now we prove the following result:
\begin{lem}\label{lem4.1G}
It holds
$$
\sup_{t\in [0, t_{0}]} |J_{\e}(W_{\e, t})-L_{V_{0}}(W^{*}_{t})|\ri 0 \mbox{ as } \e\ri 0,
$$
where $W^{*}_{t}(x,y):=W^{*}(\frac{x}{t}, \frac{y}{t})$ for $t>0$, and $W^{*}_{0}\equiv W_{\e,0}\equiv 0$.
\end{lem}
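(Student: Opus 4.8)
The plan is to peel off the penalization and truncation first, then to match the three remaining terms of $J_{\e}(W_{\e,t})$ with those of $L_{V_{0}}(W^{*}_{t})$ via the scaling $(x,y)=t(\xi,\eta)$ together with the decay/integrability of a fixed element $W^{*}\in\mathcal S_{V_{0}}$.

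\textbf{Reduction.} Since $\beta\in(0,\delta)$ with $\delta=\frac1{10}\dist\{\M,\R^{N}\setminus\Lambda\}$ and $0\in\M$, the function $W_{\e,t}$ is supported in $\{(x,y):\sqrt{|x|^{2}+y^{2}}\le 2\beta/\e\}$, whence $\supp W_{\e,t}(\cdot,0)\subset\{x:|\e x|\le 2\beta\}\subset\Lambda_{\e}$ for every $\e>0$ and every $t\in[0,t_{0}]$. Consequently $\chi_{\e}\equiv 0$ on $\supp W_{\e,t}(\cdot,0)$, so $Q_{\e}(W_{\e,t})=0$, and $g_{\e}(x,W_{\e,t}(x,0))=f(W_{\e,t}(x,0))$, i.e. $G_{\e}(x,W_{\e,t}(x,0))=F(W_{\e,t}(x,0))$; thus
$$
J_{\e}(W_{\e,t})=\tfrac12\wh M(\|W_{\e,t}\|^{2}_{\x})+\tfrac12\int_{\R^{N}}V_{\e}(x)W_{\e,t}^{2}(x,0)\,dx-\int_{\R^{N}}F(W_{\e,t}(x,0))\,dx .
$$
It therefore suffices to show that $\|W_{\e,t}\|^{2}_{\x}\ri\|W^{*}_{t}\|^{2}_{\x}$, $\int V_{\e}W_{\e,t}^{2}(x,0)\,dx\ri V_{0}\int (W^{*}_{t})^{2}(x,0)\,dx$ and $\int F(W_{\e,t}(x,0))\,dx\ri\int F(W^{*}_{t}(x,0))\,dx$, all uniformly for $t\in[0,t_{0}]$ (at $t=0$ everything is $0$ since $W_{\e,0}\equiv W^{*}_{0}\equiv0$), and then to invoke the continuity of $\wh M$ on a fixed compact interval.

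\textbf{The Dirichlet term.} Writing $\phi_{\e}(x,y):=\phi_{0}\big(\tfrac{\e}{\beta}\sqrt{|x|^{2}+y^{2}}\big)$, so $W_{\e,t}=\phi_{\e}W^{*}_{t}$, I would expand $\|W_{\e,t}\|^{2}_{\x}=A_{\e}(t)+B_{\e}(t)+C_{\e}(t)$ with $A_{\e}(t)=\iint y^{1-2s}|\nabla\phi_{\e}|^{2}(W^{*}_{t})^{2}$, $B_{\e}(t)=\iint y^{1-2s}\phi_{\e}^{2}|\nabla W^{*}_{t}|^{2}$ and $C_{\e}(t)$ the cross term, with $|C_{\e}(t)|\le A_{\e}(t)^{1/2}B_{\e}(t)^{1/2}$ by Cauchy--Schwarz. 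Since $0\le 1-\phi_{\e}^{2}\le 1$ is supported where $\sqrt{|x|^{2}+y^{2}}\ge\beta/\e$, the change of variables $(x,y)=t(\xi,\eta)$ gives $0\le\|W^{*}_{t}\|^{2}_{\x}-B_{\e}(t)\le t_{0}^{N-2s}\iint_{\{\sqrt{|\xi|^{2}+\eta^{2}}\ge\beta/(\e t_{0})\}}\eta^{1-2s}|\nabla W^{*}|^{2}\,d\xi d\eta\ri0$ uniformly in $t$, because $|\nabla W^{*}|^{2}\in L^{1}(\R^{N+1}_{+},y^{1-2s})$. For $A_{\e}(t)$ I would use $|\nabla\phi_{\e}|\le C\e/\beta$ on the annulus $\{\beta/\e\le\sqrt{|x|^{2}+y^{2}}\le 2\beta/\e\}$, H\"older's inequality with exponent $\gamma=1+\tfrac{2}{N-2s}$ and Lemma \ref{lem2.1}$(i)$, and bound the $y^{1-2s}$-measure of the (rescaled) annulus by $C(\beta/(\e t))^{N+2-2s}$; since $1-\tfrac1\gamma=\tfrac{2}{N+2-2s}$, all powers of $\e$, $\beta$, $t$ cancel up to $t^{N-2s}$, yielding $A_{\e}(t)\le C\,t_{0}^{N-2s}\big(\iint_{\{\sqrt{|\xi|^{2}+\eta^{2}}\ge\beta/(\e t_{0})\}}\eta^{1-2s}|W^{*}|^{2\gamma}\,d\xi d\eta\big)^{1/\gamma}\ri0$ uniformly, because $W^{*}\in\x$ forces $|W^{*}|^{2\gamma}\in L^{1}(\R^{N+1}_{+},y^{1-2s})$ by Lemma \ref{lem2.1}$(i)$. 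As $B_{\e}(t)\le t_{0}^{N-2s}\|W^{*}\|^{2}_{\x}$, also $|C_{\e}(t)|\ri0$ uniformly, so $\|W_{\e,t}\|^{2}_{\x}\ri\|W^{*}_{t}\|^{2}_{\x}$ uniformly in $t\in[0,t_{0}]$; these quantities all lie in a fixed bounded interval, on which $\wh M$ is uniformly continuous, hence the $\wh M$-terms match uniformly.

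\textbf{The potential and nonlinear terms.} Set $V_{2}:=\sup_{\Lambda}V<\infty$ ($\Lambda$ is bounded). Writing the potential difference as $\int_{\R^{N}}\big[V_{\e}(x)\phi_{\e}(x,0)^{2}-V_{0}\big]W^{*}_{t}(x,0)^{2}\,dx$, I split according to whether $\phi_{\e}(\cdot,0)=1$: on $\{|x|\ge\beta/\e\}$ the integrand is dominated by $(V_{2}+V_{0})W^{*}_{t}(x,0)^{2}$ and its integral vanishes uniformly in $t$ by scaling and the $L^{2}$-decay of $W^{*}(\cdot,0)$ exactly as in the Dirichlet step; on the remaining set (where $\e x\in\Lambda$) the integrand equals $\big(V(\e x)-V_{0}\big)W^{*}_{t}(x,0)^{2}$, and since $V$ is uniformly continuous on $\overline{B_{2\beta}(0)}\subset\Lambda$ with $V(0)=V_{0}$, after scaling $x=t\xi$ one bounds $|V(\e t\xi)-V_{0}|$ by the modulus of continuity of $V$ at $\e t_{0}L$ on $\{|\xi|\le L\}$ and by $V_{2}+V_{0}$ on $\{|\xi|>L\}$, first choosing $L$ large (so the $L^{2}$-tail of $W^{*}(\cdot,0)$ is small) and then $\e$ small. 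For the nonlinear term, $\phi_{\e}(\cdot,0)\equiv1$ on $\{|x|\le\beta/\e\}$, so $\int_{\R^{N}}\big[F(\phi_{\e}(x,0)W^{*}_{t}(x,0))-F(W^{*}_{t}(x,0))\big]dx$ is supported on $\{|x|\ge\beta/\e\}$; using $(f_{1})$-$(f_{2})$, i.e. $|F(\tau)|\le\tfrac\delta2\tau^{2}+C_{\delta}|\tau|^{p+1}$, and $0\le\phi_{\e}\le1$, the integrand is $\le\delta\,W^{*}_{t}(x,0)^{2}+2C_{\delta}|W^{*}_{t}(x,0)|^{p+1}$, whose integral vanishes uniformly in $t$ by scaling and the $L^{2}(\R^{N})\cap L^{p+1}(\R^{N})$-decay of $W^{*}(\cdot,0)$ (here $W^{*}(\cdot,0)\in H^{s}(\R^{N})\subset L^{p+1}(\R^{N})$ since $p+1\in(2,\2)$). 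Combining this with the Dirichlet step proves the lemma.

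\textbf{Main obstacle.} The only genuine difficulty is getting all estimates \emph{uniform} in $t\in[0,t_{0}]$, since $W^{*}_{t}$ spreads out for $t$ large and concentrates for $t$ small, so a single dominated-convergence argument does not apply; the remedy used throughout is to rescale by $(x,y)=t(\xi,\eta)$, reducing each quantity to a tail of a fixed integral of $W^{*}$ over $\{\sqrt{|\xi|^{2}+\eta^{2}}\ge\beta/(\e t)\}$, and then to pass to the worst case $t=t_{0}$, for which the tail radius $\beta/(\e t_{0})$ still diverges as $\e\ri0$.
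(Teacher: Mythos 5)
Your proposal is correct and follows essentially the same route as the paper: the same reduction (observing $Q_{\e}(W_{\e,t})=0$ and $G_{\e}=F$ on the support), the same three-term split, the same expansion $A_{\e}(t)+B_{\e}(t)+C_{\e}(t)$ of the Dirichlet term with H\"older, Lemma \ref{lem2.1}$(i)$ and Cauchy--Schwarz, and the same device for uniformity in $t\in[0,t_{0}]$, namely rescaling $(x,y)=t(\xi,\eta)$ and dominating every tail by the worst-case radius $\beta/(\e t_{0})\ri\infty$. The one minor deviation is in the potential and $F$ terms: you close those estimates using only the $L^{2}\cap L^{p+1}$-integrability of $W^{*}(\cdot,0)$ (tail smallness) together with uniform continuity of $V$ near $0$, whereas the paper appeals to the pointwise polynomial decay $W^{*}(x,0)\le C(1+|x|^{N+2s})^{-1}$ from \cite{FQT} (inequality \eqref{GROWTH}) to obtain a $t$-independent dominating function; both arguments are valid, and yours is marginally more elementary in that it does not require the sharp decay result.
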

\begin{proof}
Since $supp(W_{\e, t}(\cdot, 0))\subset \Lambda_{\e}$ and $supp(\chi_{\e})\subset \R^{N}\setminus \Lambda_{\e}$, we have $Q(W_{\e, t})=0$ and $G_{\e}(x, W_{\e, t}(x,0))=F(W_{\e, t}(x,0))$ for all $\e, t\geq 0$ and $x\in \R^{N}$.
Hence, for all $t\in (0, t_{0}]$
\begin{align*}
|J_{\e}(W_{\e, t})-L_{V_{0}}(W^{*}_{t})|&\leq \frac{1}{2} |\wh{M}(\|W_{\e,t}\|^{2}_{\x})-\wh{M}(\|W^{*}_{t}\|^{2}_{\x})|+\frac{1}{2} \int_{\R^{N}} |V_{\e}(x) \phi_{0}(\e|x|/\beta)-V_{0}|(W_{t}^{*}(x,0))^{2}\, dx\\
&+\int_{\R^{N}} |F(W_{\e,t}(x,0))-F(W^{*}_{t}(x,0))|\, dx.
\end{align*}
Note that as $\e\ri 0$
\begin{align}\label{condor}
\|W_{\e, t}\|^{2}_{X^{s}(\R^{N+1}_{+})}=\|W^{*}_{t}\|^{2}_{X^{s}(\R^{N+1}_{+})}+o(1) \mbox{ uniformly in } t\in [0, t_{0}].
\end{align}
Indeed,
\begin{align*}
\|W_{\e, t}\|_{X^{s}(\R^{N+1}_{+})}^{2}&=\iint_{\R^{N+1}_{+}} y^{1-2s} |\nabla \phi_{0}(\e\sqrt{|x|^{2}+y^{2}}/\beta)|^{2}\left(W^{*}\left(\frac{x}{t}, \frac{y}{t}\right)\right)^{2}\, dx dy\\
&+\iint_{\R^{N+1}_{+}} y^{1-2s} |\phi_{0}(\e\sqrt{|x|^{2}+y^{2}}/\beta)|^{2}\left|\nabla W^{*}\left(\frac{x}{t}, \frac{y}{t}\right)\right|^{2}\, dx dy \\
&+2 \iint_{\R^{N+1}_{+}} y^{1-2s} \nabla \phi_{0}(\e\sqrt{|x|^{2}+y^{2}}/\beta) \nabla W^{*}\left(\frac{x}{t}, \frac{y}{t}\right) \phi_{0}(\e\sqrt{|x|^{2}+y^{2}}/\beta) W^{*}\left(\frac{x}{t}, \frac{y}{t}\right) \, dx dy\\
&=:A_{\e,t}+B_{\e,t}+C_{\e,t}.
\end{align*}
Now, by Lemma \ref{lem2.1}, for any $t\in (0, t_{0}]$ we have
\begin{align}\label{3.1}
A_{\e,t}&\leq C\e^{2} \iint_{B^{+}_{\frac{2\beta}{\e}}(0, 0)\setminus B^{+}_{\frac{\beta}{\e}}(0, 0)} y^{1-2s} \left(W^{*}\left(\frac{x}{t}, \frac{y}{t}\right)\right)^{2}\, dx dy \nonumber \\
&\leq C\e^{2} \left[ \iint_{B^{+}_{\frac{2\beta}{\e}}(0, 0)\setminus B^{+}_{\frac{\beta}{\e}}(0, 0)} y^{1-2s} \left(W^{*}\left(\frac{x}{t}, \frac{y}{t}\right)\right)^{2\gamma}\, dx dy  \right]^{\frac{1}{\gamma}}  \left[\iint_{B^{+}_{\frac{2\beta}{\e}}(0, 0)\setminus B^{+}_{\frac{\beta}{\e}}(0, 0)} y^{1-2s}\, dx dy \right]^{1-\frac{1}{\gamma}} \nonumber \\
&\leq C\e^{2} \left[ \iint_{B^{+}_{\frac{2\beta}{\e}}(0, 0)\setminus B^{+}_{\frac{\beta}{\e}}(0, 0)} y^{1-2s} \left(W^{*}\left(\frac{x}{t}, \frac{y}{t}\right)\right)^{2\gamma}\, dx dy  \right]^{\frac{1}{\gamma}} \left[\int_{\frac{\beta}{\e}}^{\frac{2\beta}{\e}} r^{N+1-2s}\, dr \right]^{1-\frac{1}{\gamma}} \nonumber \\
&\leq C \left[ \iint_{B^{+}_{\frac{2\beta}{\e}}(0, 0)\setminus B^{+}_{\frac{\beta}{\e}}(0, 0)} y^{1-2s} \left(W^{*}\left(\frac{x}{t}, \frac{y}{t}\right)\right)^{2\gamma}\, dx dy  \right]^{\frac{1}{\gamma}} \nonumber\\
&\leq C \left[ \iint_{B^{+}_{\frac{2\beta}{t\e}}(0, 0)\setminus B^{+}_{\frac{\beta}{t\e}}(0, 0)}  t^{N+2-2s} y^{1-2s} \left(W^{*}\left(x,y\right)\right)^{2\gamma}\, dx dy  \right]^{\frac{1}{\gamma}} \nonumber\\
&\leq C \left[ \iint_{\R^{N+1}_{+}\setminus B^{+}_{\frac{\beta}{t_{0}\e}}(0, 0)}  t^{N+2-2s}_{0} y^{1-2s} \left(W^{*}\left(x,y\right)\right)^{2\gamma}\, dx dy  \right]^{\frac{1}{\gamma}}\ri 0 \mbox{ as } \e\ri 0.
\end{align}
On the other hand, for $t\in (0, t_{0}]$, using that $0\leq \phi_{0}\leq 1$ and $\phi_{0}$ is nonincreasing we get
\begin{align*}
&\left|B_{\e,t}-\iint_{\R^{N+1}_{+}} y^{1-2s} |\nabla W^{*}\left(\frac{x}{t}, \frac{y}{t}\right)|^{2}\, dx dy\right| \\
&\leq \iint_{\R^{N+1}_{+}} y^{1-2s} [1-(\phi_{0}(\e\sqrt{|x|^{2}+y^{2}}/\beta))^{2}] \left|\nabla W^{*}\left(\frac{x}{t}, \frac{y}{t}\right)\right|^{2}\, dx dy\\
&=\iint_{\R^{N+1}_{+}} t^{N-2s} y^{1-2s} [1-(\phi_{0}(\e t\sqrt{|x|^{2}+y^{2}}/\beta))^{2}] \left|\nabla W^{*}\left(x, y\right)\right|^{2}\, dx dy \\
&\leq \iint_{\R^{N+1}_{+}} t^{N-2s}_{0} y^{1-2s} [1-(\phi_{0}(\e t_{0}\sqrt{|x|^{2}+y^{2}}/\beta))^{2}] \left|\nabla W^{*}\left(x, y\right)\right|^{2}\, dx dy \ri 0 \mbox{ as } \e\ri 0.
\end{align*}
Since H\"older's inequality yields $C_{\e,t}\leq A_{\e,t}^{1/2} B_{\e,t}^{1/2}$, we deduce that 
$$
\sup_{t\in [0, t_{0}]} C_{\e,t}\ri 0 \mbox{ as } \e\ri 0.
$$ 
Therefore \eqref{condor} holds true. 

Now, noting that $\|W_{\e, t}\|^{2}_{X^{s}(\R^{N+1}_{+})}, \|W^{*}_{t}\|^{2}_{X^{s}(\R^{N+1}_{+})}\leq C$ for all $t\in [0, t_{0}]$ and $\e>0$ sufficiently small, and using $\wh{M}(t_{2})- \wh{M}(t_{1})= \int_{t_{1}}^{t_{2}} M(\tau)\, d\tau$ and $(M4)$, we see that
$$
\left|\wh{M}\left(\|W_{\e, t}\|^{2}_{X^{s}(\R^{N+1}_{+})}\right)-\wh{M}\left(\|W^{*}_{t}\|^{2}_{X^{s}(\R^{N+1}_{+})}\right) \right|\leq M(C) \left|\|W_{\e, t}\|^{2}_{X^{s}(\R^{N+1}_{+})}- \|W^{*}_{t}\|^{2}_{X^{s}(\R^{N+1}_{+})} \right|
$$
which together with \eqref{condor} implies that 
$$
\wh{M}\left(\|W_{\e, t}\|^{2}_{X^{s}(\R^{N+1}_{+})}\right)= \wh{M}\left(\|W^{*}_{t}\|^{2}_{X^{s}(\R^{N+1}_{+})}\right)+o(1) \mbox{ uniformly in } t\in [0, t_{0}].
$$
On the other hand, recalling that (see \cite{FQT}) $W^{*}(\cdot, 0)$ has the following polynomial type-decay
$$
0<W^{*}(x,0)\leq \frac{C}{1+|x|^{N+2s}} \quad \forall x\in \R^{N},
$$
we have  
\begin{align}\label{GROWTH}
0<W^{*}_{t}(x,0)\leq \frac{Ct^{N+2s}_{0}}{t^{N+2s}_{0}+|x|^{N+2s}} \quad \forall x\in \R^{N}, t\in (0, t_{0}],
\end{align}
which together with $0\leq V_{\e}(x) \phi_{0}(\e |x|/\beta)\leq \max_{x\in \overline{\Gamma^{0}_{2\beta}}(0)} V(x) $ and $\phi_{0}(\e \cdot)\ri 1$ as $\e\ri 0$, implies that
$$
\lim_{\e\ri 0} \sup_{t\in [0, t_{0}]} \left|\int_{\R^{N}} [V_{\e}(x) \phi_{0}(\e|x|/\beta)-V_{0}](W_{t}^{*}(x,0))^{2}\, dx \right|=0.
$$
Finally, observing that
$$
F(a+b)-F(a)=b\int_{0}^{1} f(a+\tau b)\, d\tau,
$$
it follows from $(f_1)$ and $(f_2)$ that
\begin{align*}
&\int_{\R^{N}} |F(W_{\e,t}(x,0))-F(W^{*}_{t}(x,0))|\, dx\\
&\leq \int_{\R^{N}} |W_{\e, t}(x,0)-W^{*}_{t}(x,0)| \int_{0}^{1} |f(W^{*}_{t}(x,0)+\tau (W_{\e, t}(x,0)-W^{*}_{t}(x,0))|\, d\tau dx\\
&\leq C\int_{\R^{N}} |W_{\e, t}(x,0)-W^{*}_{t}(x,0)| [|W^{*}_{t}(x,0)|+|W_{\e, t}(x,0)-W^{*}_{t}(x,0)|\\
&\quad+|W^{*}_{t}(x,0)|^{\2-1}+|W_{\e, t}(x,0)-W^{*}_{t}(x,0)|^{\2-1}]\, dx.
\end{align*}
Taking into account $W_{\e,t}(x,0)-W^{*}_{t}(x,0)=(\phi_{0}(\e |x|/\beta)-1)W^{*}_{t}(x,0)$, \eqref{GROWTH} and $\phi_{0}(\e \cdot)\ri 1$ as $\e\ri 0$, we get
$$
\lim_{\e\ri 0} \sup_{t\in [0, t_{0}]} \left|\int_{\R^{N}} F(W_{\e,t}(x,0))-F(W^{*}_{t}(x,0))\, dx\right|=0.
$$
\end{proof}

\noindent
Notice that from \eqref{4.2G} and Lemma \ref{lem4.1G} there exists $\e_{0}$ sufficiently small such that
\begin{align*}
|J_{\e}(W_{\e, t_{0}})-L_{V_{0}}(W^{*}_{t_{0}})|\leq -L_{V_{0}}(W_{t_{0}})-2 \quad J_{\e}(W_{\e, t_{0}})<-2 \quad \mbox{for} \e\in (0, \e_{0}).
\end{align*}
Therefore, we can define the minimax level
$$
c_{\e}:=\inf_{\gamma\in \Gamma_{\e}} \max_{t\in [0, 1]} J_{\e}(\gamma(t))
$$
where
$$
\Gamma_{\e}:=\{\gamma\in C([0, 1], X_{\e}): \gamma(0)=0, \gamma(1)=W_{\e, t_{0}}\}.
$$

\begin{lem}\label{prop4.1G}
$\lim_{\e\ri 0} c_{\e}=c_{V_{0}}$.
\end{lem}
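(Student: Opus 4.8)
The plan is to prove the two inequalities $\limsup_{\e\to 0} c_\e \le c_{V_0}$ and $\liminf_{\e\to 0} c_\e \ge c_{V_0}$ separately. For the upper bound, I would use the explicit path built from $W_{\e,t}$. Recall $c_{V_0}=b_{V_0}$ (Lemma \ref{Final} and Lemma \ref{lem2.16FIJ}) and that $W^{*}\in\mathcal S_{V_0}$, so by the arguments in the proof of Lemma \ref{Final} the map $t\mapsto L_{V_0}(W^{*}_{t})$ attains its maximum $L_{V_0}(W^{*})=c_{V_0}$ at $t=1$ and is $<-2$ for $t\ge t_0$. Consider the path $\gamma_\e(\sigma):=W_{\e,\sigma t_0}$ for $\sigma\in[0,1]$; this lies in $\Gamma_\e$ since $\gamma_\e(0)=W_{\e,0}\equiv 0$ and $\gamma_\e(1)=W_{\e,t_0}$. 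By Lemma \ref{lem4.1G},
$$
\max_{\sigma\in[0,1]} J_\e(\gamma_\e(\sigma)) = \max_{t\in[0,t_0]} J_\e(W_{\e,t}) = \max_{t\in[0,t_0]} L_{V_0}(W^{*}_{t}) + o(1) = c_{V_0} + o(1) \quad\text{as }\e\to 0,
$$
which yields $c_\e \le c_{V_0}+o(1)$.

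For the lower bound I would compare $J_\e$ with a suitable limiting functional. Since $g_\e(x,t)\le f(t)$ pointwise and $Q_\e\ge 0$, one has $J_\e(u)\ge P_\e(u)\ge \widetilde J_\e(u)$, where $\widetilde J_\e(u):=\frac12\wh M(\|u\|^2_{\x})+\frac12\int_{\R^N}V_\e(x)u^2(x,0)\,dx-\int_{\R^N}F(u(x,0))\,dx$ is the unpenalized functional with the full nonlinearity $f$. Because $V_\e(x)=V(\e x)\ge V_1$ and, for any fixed $\gamma\in\Gamma_\e$, the path has compact image, one checks that $\widetilde J_\e(u)\ge L_{V_0}(u)$ is \emph{not} quite true pointwise (since $V_\e$ need not dominate $V_0$ everywhere), so instead I would argue as follows: given $\gamma\in\Gamma_\e$, one has $\gamma(1)=W_{\e,t_0}$ with $\widetilde J_\e(W_{\e,t_0})=L_{V_0}(W^{*}_{t_0})+o(1)<0$ for small $\e$; hence $\gamma$ (after discarding the tail) can be used, together with a scaling/translation, to produce a path in $\Gamma_{V_0}$ (or $\Gamma_{V_0,\mathrm{rad}}$) whose maximum of $L_{V_0}$ is $\le \max_{t}\widetilde J_\e(\gamma(t))+o(1)\le \max_t J_\e(\gamma(t))+o(1)$. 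Taking the infimum over $\gamma\in\Gamma_\e$ and using the definition \eqref{cv0} of $c_{V_0}$ gives $c_{V_0}\le c_\e+o(1)$.

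The cleanest way to handle the lower bound rigorously, and the step I expect to be the main obstacle, is the passage from a near-optimal path for $c_\e$ to an admissible path for $c_{V_0}$: one must control the potential term $\int V_\e(x)u^2(x,0)\,dx$ from below by $V_0\int u^2(x,0)\,dx$ up to an error that is uniform along the path and vanishes as $\e\to 0$. This is delicate because elements of the path need not be concentrated, so $V_\e(\e^{-1}\cdot)$ evaluated on them cannot be replaced by $V_0$ without further information. The standard remedy (following \cite{BJ,BW,FIJ,Gloss}) is to invoke the penalization: if $J_\e(u)$ stays below $c_{V_0}+o(1)$ along the path then $Q_\e(u)$ is forced to be small, which confines the relevant mass to $\Lambda_\e$ where $V_\e$ is close to values $\ge V_0$ only near $\M$; more precisely one shows that, were $\liminf_\e c_\e<c_{V_0}$, one could extract from near-optimal paths a Palais--Smale sequence for a limiting problem with potential $\le V_0$ at energy $<c_{V_0}$, contradicting $c_{V_0}=b_{V_0}$ being the least energy level (Lemma \ref{Final}). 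I would therefore first establish $\limsup c_\e\le c_{V_0}$ as above, then prove $\liminf c_\e\ge c_{V_0}$ by this contradiction argument, using the compactness of $\mathcal S_{V_0}$ (Proposition \ref{COMPs}) and the characterization $c_{V_0}=b_{V_0}$ to close the loop.
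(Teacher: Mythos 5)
Your upper bound is exactly the paper's: it takes the path $\gamma_\e(\sigma)=W_{\e,\sigma t_0}$, applies Lemma \ref{lem4.1G}, and uses the Pohozaev identity together with $(M5)$ to see that $\max_{t\in[0,t_0]}L_{V_0}(W^{*}_{t})=L_{V_0}(W^{*})=c_{V_0}$. No issues there.

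For the lower bound, you correctly locate the obstacle (replacing $\int V_\e u^2$ by $V_0\int u^2$ along an arbitrary near-optimal path) and the right tool (the penalization $Q_\e$ bounding the mass outside $\Lambda_\e$). But the paper does \emph{not} extract a Palais--Smale sequence ``for a limiting problem with potential $\le V_0$,'' and that detour is both unnecessary and slightly misdirected: the relevant comparison is $V_\e\ge V_0$ on $\Lambda_\e$ (not $\le V_0$ anywhere), and the whole argument is a direct level comparison with no PS sequences. What you wave at as ``discarding the tail'' is in fact the crux and needs to be made precise. The paper's argument runs as follows. Assume $\liminf c_\e<c_{V_0}-\alpha$, pick $\e$ small with $\frac{V_0}{2}\e[1+(1+c_{V_0})^2]<\min\{\alpha,1\}$ and a near-optimal path $\gamma\in\Gamma_\e$ with $\max J_\e(\gamma)<c_{V_0}-\alpha$ and $P_\e(\gamma(1))<-2$; let $t_0\in(0,1)$ be the first time $P_\e(\gamma(t_0))=-1$. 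On $[0,t_0]$ one has $P_\e\ge -1$, so $Q_\e(\gamma(t))=J_\e(\gamma(t))-P_\e(\gamma(t))\le c_{V_0}+1$, and the definition of $Q_\e$ then yields the \emph{quantitative} bound $\int_{\R^N\setminus\Lambda_\e}\gamma(t)^2\,dx\le\e[1+(1+c_{V_0})^2]$. Combined with $G(x,t)\le F(t)$ and $V_\e\ge V_0$ on $\Lambda_\e$, this gives $P_\e(\gamma(t))\ge L_{V_0}(\gamma(t))-\frac{V_0}{2}\e[1+(1+c_{V_0})^2]$ on $[0,t_0]$. In particular $L_{V_0}(\gamma(t_0))\le -1+o(1)<0$, so $\gamma|_{[0,t_0]}$ is (after reparametrization) admissible for $c_{V_0}$, whence $\max_{[0,t_0]}L_{V_0}(\gamma(t))\ge c_{V_0}$. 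Feeding this back through the chain $c_{V_0}-\alpha>\max J_\e\ge\max P_\e\ge\max L_{V_0}-\frac{V_0}{2}\e[1+(1+c_{V_0})^2]\ge c_{V_0}-\frac{V_0}{2}\e[1+(1+c_{V_0})^2]$ gives $c_{V_0}-\alpha>c_{V_0}-\alpha$, a contradiction. So your plan reaches the right shore but leaves the actual bridge — the stopping time $t_0$ defined via $P_\e$, the resulting uniform bound on $Q_\e$, and the transfer $P_\e\ge L_{V_0}-o(1)$ — unbuilt, and proposes an extra PS-sequence layer that the paper (and the problem) does not require.
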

\begin{proof}
We first prove that 
\begin{equation}\label{4.5G}
\limsup_{\e\ri 0}c_{\e}\leq c_{V_{0}}.
\end{equation}
Since $W_{\e, t}\ri 0$ in $X_{\e}$ as $t\ri 0$, and setting 
\begin{equation}\label{4.6G}
\gamma_{\e}(\tau):=W_{\e, \tau t_{0}} \mbox{ for } \tau\in (0, 1], \, \gamma_{\e}(0)=0,
\end{equation}
we see that $\gamma_{\e}\in \Gamma_{\e}$ and thus
\begin{equation}\label{4.7G}
c_{\e}\leq \max_{t\in [0, 1]} J_{\e}(\gamma_{\e}(t))= \max_{t\in [0, t_{0}]} J_{\e}(W_{\e, t}).
\end{equation} 
By Lemma \ref{lem4.1G}, Pohozaev Identity and $(M5)$ we deduce that
\begin{align*}
\max_{t\in [0, t_{0}]} J_{\e}(W_{\e, t})&=\max_{t\in [0, t_{0}]} L_{V_{0}}\left(W^{*}\left(\frac{\cdot}{t}, \frac{\cdot}{t}\right)\right)+o(1)\\
&=\max_{t\in [0, t_{0}]} \left[\frac{1}{2} \wh{M}\left(t^{N-2s} \|W^{*}\|^{2}_{X^{s}(\R^{N+1}_{+})}\right) -t^{N} \left(\frac{N-2s}{2N} \right) M\left(\|W^{*}\|^{2}_{X^{s}(\R^{N+1}_{+})}\right) \|W^{*}\|^{2}_{X^{s}(\R^{N+1}_{+})}\right]+o(1) \\
&\leq L_{V_{0}}(W^{*})+o(1)=c_{V_{0}}+o(1).
\end{align*}
Next, we show that 
\begin{equation}\label{4.9G}
\liminf_{\e\ri 0}c_{\e}\geq c_{V_{0}}.
\end{equation}
Assume by contradiction that $\liminf_{\e\ri 0}c_{\e}< c_{V_{0}}$. Then there exist $\alpha>0$, $\e_{n}\ri 0$ and $\gamma_{n}\in \Gamma_{\e_{n}}$ such that $\max_{t\in [0, 1]} J_{\e_{n}}(\gamma_{n}(t))<c_{V_{0}}-\alpha$. Take $\e_{n}$ such that
$$
\frac{V_{0}}{2} \e_{n}[1+(1+c_{V_{0}})^{2}]<\min\{\alpha, 1\} \mbox{ and } P_{\e_{n}}(\gamma_{n}(1))<-2.
$$ 
Denoting $\e_{n}$ by $\e$ and $\gamma_{n}$ by $\gamma$, since $P_{\e}(\gamma(0))=0$, we can find $t_{0}\in (0, 1)$ such that 
$$
P_{\e}(\gamma(t_{0}))=-1 \mbox{ and } P_{\e}(\gamma(t)) \quad \forall t\in [0, t_{0}].
$$
Hence,
$$
Q_{\e}(\gamma(t))\leq J_{\e}(\gamma(t))+1<c_{V_{0}}-\alpha+1<c_{V_{0}}+1
$$
and consequently
$$
\int_{\R^{N}\setminus \Lambda_{\e}} \gamma(t)^{2}\, dx\leq \e[1+(1+c_{V_{0}})^{2}] \mbox{ for } t\in [0, t_{0}].
$$
Since $G(x,t)\leq F(t)$ we obtain for $t\in [0, t_{0}]$
\begin{align*}
P_{\e}(\gamma(t))&\geq L_{V_{0}}(\gamma(t))-\frac{V_{0}}{2}\int_{\R^{N}\setminus \Lambda_{\e}} \gamma(t)^{2}\, dx\\
&\geq L_{V_{0}}(\gamma(t))-\frac{V_{0}}{2} \e[1+(1+c_{V_{0}})^{2}] 
\end{align*}
which yields
$$
L_{V_{0}}(\gamma(t_{0}))\leq \frac{V_{0}}{2}\e[1+(1+c_{V_{0}})^{2}]-1<0.
$$
On the other hand, the mountain pass level corresponds to the least energy level (see Lemma \ref{Final}), so we have
$$
\max_{t\in [0, t_{0}]} L_{V_{0}}(\gamma(t))\geq c_{V_{0}}.
$$
From
$$
c_{V_{0}}-\alpha>\max_{t\in [0, 1]} L_{V_{0}}(\gamma(t))\geq \max_{t\in [0, t_{0}]} P_{\e}(\gamma(t))
$$
we get
$$
c_{V_{0}}-\alpha>c_{V_{0}}-\frac{V_{0}}{2} \e[1+(1+c_{V_{0}})^{2}]>c_{V_{0}}-\alpha 
$$
and this gives a contradiction.\\
Now, we define
\begin{equation}\label{DE}
d_{\e}:=\max_{t\in [0, 1]} J_{\e}(\gamma_{\e}(t)),
\end{equation}
where $\gamma_{\e}$ is given in \eqref{4.6G}. Then, by \eqref{4.5G}, \eqref{4.7G} and \eqref{4.9G} we see that $c_{\e}\leq d_{\e}$ and
$$
\lim_{\e\ri 0} d_{\e}=\lim_{\e\ri 0} c_{\e}=c_{V_{0}}.
$$
This ends the proof of lemma.
\end{proof}

\noindent
Now we  use the notations
$$
J_{\e}^{b}:=\{w\in X_{\e}: J_{\e}(w)\leq b\},
$$
and for $A\subset X_{\e}$
$$
A^{b}:=\{w\in X_{\e}: \inf_{v\in A} \|w-v\|_{\e}\leq b\}.
$$
The next lemma will be crucial to prove the main result of this work.
\begin{lem}\label{Klem}
There exists $d_{0}>0$ such that for any $(\e_{n})$ and $(w_{\e_{n}})$ with
\begin{align*}
\lim_{n\ri \infty}\e_{n}= 0, \, w_{\e_{n}}\in E_{\e_{n}}^{d_{0}}, \, \lim_{n\ri \infty} J_{\e_{n}}(w_{\e_{n}})\leq c_{V_{0}}, \, \lim_{n\ri \infty} \|J'_{\e_{n}}(w_{\e_{n}})\|_{(X_{\e_{n}})^{-1}}=0,
\end{align*}
there exists, up to a subsequence, $(z_{n})\subset \R^{N}$, $x_{0}\in \M$ and $W\in \mathcal{S}_{V_{0}}$ such that
\begin{align*}
\lim_{n\ri \infty}|\e_{n} z_{n}- x_{0}|=0 \mbox{ and } \lim_{n\ri \infty} \|w_{\e_{n}} -\phi_{0}(\e_{n}\sqrt{|x-z_{n}|^{2}+y^{2}}/\beta) W(x-z_{n}, y)\|_{\e_{n}}=0.
\end{align*}
\end{lem}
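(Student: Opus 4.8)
Here is how I would go about proving Lemma \ref{Klem}.

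The plan is to run the penalization scheme of Byeon--Jeanjean \cite{BJ} and Gloss \cite{Gloss} in the extension variables, the one genuinely new difficulty being the nonlocal Kirchhoff coefficient, whose limiting value must be identified with care. First, since $w_{\e_n}\in E_{\e_n}^{d_0}$, for each $n$ I pick $x'_n\in\M^{\beta}$ and $W_n\in\mathcal{S}_{V_0}$ with $\|w_{\e_n}-\Psi_n\|_{\e_n}\le d_0+o(1)$, where $\Psi_n(x,y):=\phi_0(\sqrt{|\e_n x-x'_n|^2+\e_n^2y^2}/\beta)\,W_n(x-x'_n/\e_n,y)$; by compactness of $\M^{\beta}$ and of $\mathcal{S}_{V_0}$ in $\X$ (Proposition \ref{COMPs}), along a subsequence $x'_n\to x_0\in\M^{\beta}$ and $W_n\to W_0\in\mathcal{S}_{V_0}$ in $\X$. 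Setting $z_n:=x'_n/\e_n$ (so $\e_n z_n\to x_0$) and $\tilde w_n:=w_{\e_n}(\cdot+z_n,\cdot)$, an estimate of the type carried out in Lemma \ref{lem4.1G} gives $\Psi_n(\cdot+z_n,\cdot)=\phi_0(\e_n\sqrt{|x|^2+y^2}/\beta)W_n\to W_0$ in $\X$, whence $\limsup_n\|\tilde w_n-W_0\|_{\X}\le C d_0$ (using $\|\cdot\|_{\X}\le C\|\cdot\|_{\e_n}$ and translation invariance of $\|\cdot\|_{\X}$). Thus $(\tilde w_n)$ is bounded in $\X$, and along a further subsequence $M(\|w_{\e_n}\|^2_{\x})\to\alpha_0\ge m_0$, $\tilde w_n\rightharpoonup w$ in $\X$, $\tilde w_n(\cdot,0)\to w(\cdot,0)$ in $L^q_{loc}(\R^N)$ for $q\in[2,\2)$ and a.e. Weak lower semicontinuity of $\|\cdot\|_{\X}$ yields $\|w-W_0\|_{\X}\le C d_0$, so choosing $d_0$ small (recall $\inf_{\mathcal{S}_{V_0}}\|\cdot\|_{\X}>0$) forces $w\neq0$.

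Next I would trap the mass and identify a limiting problem. As $(\|w_{\e_n}\|_{\e_n})$ is bounded and $P_{\e_n}(w_{\e_n})\ge-C$ (using $\wh M\ge0$, $G_{\e_n}\le F$, $(f_1)$-$(f_2)$), the hypothesis $J_{\e_n}(w_{\e_n})\le c_{V_0}+o(1)$ gives $Q_{\e_n}(w_{\e_n})\le C$, whence $\int_{\R^N\setminus\Lambda_{\e_n}}w_{\e_n}^2(\cdot,0)\,dx=\e_n\int_{\R^N}\chi_{\e_n}(x)w_{\e_n}^2(x,0)\,dx\le C\e_n\to0$. Testing $J'_{\e_n}(w_{\e_n})$ against $\varphi(\cdot-z_n,\cdot)$ for $\varphi\in C^{\infty}_c(\overline{\R^{N+1}_+})$ — whose $\e_n$-norm is bounded because $\int V_{\e_n}(x)\varphi^2(x-z_n,0)\,dx=\int V(\e_n x+x'_n)\varphi^2(x,0)\,dx$ — and observing that $x_0\in\M^{\beta}\subset\Lambda$ with $\Lambda$ open, so that for $n$ large $\e_n x+x'_n\in\Lambda$ on $\supp\varphi(\cdot,0)$ (hence there $g(\e_n x+x'_n,\cdot)=f$ and $\chi_{\e_n}(\cdot+z_n)=0$, so the penalization term vanishes identically), I let $n\to\infty$ to obtain that $w$ is a nontrivial weak solution of
$$
\left\{
\begin{array}{ll}
-\dive(y^{1-2s}\nabla w)=0 & \mbox{ in } \R^{N+1}_+,\\[1mm]
\frac{1}{\alpha_0}\,\frac{\partial w}{\partial\nu^{1-2s}}=-V(x_0)\,w(\cdot,0)+f(w(\cdot,0)) & \mbox{ in } \R^N.
\end{array}
\right.
$$

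The main obstacle — the point where, as emphasized in the Introduction, the Kirchhoff term makes the analysis ``much more delicate'' than for $M\equiv1$ — is to upgrade this to $\alpha_0=M(\|w\|^2_{\x})$ together with strong convergence $\tilde w_n\to w$ in $\X$. First, using the energy bound $J_{\e_n}(w_{\e_n})\le c_{V_0}+o(1)$, the mass-trapping, and the fact that the penalized equation admits no nontrivial solution outside $\Lambda$ (since $\wh f(t)t\le\frac{V_1}{2}t^2$), a Brezis--Lieb/concentration-compactness splitting rules out any bubble escaping to spatial infinity, so $\tilde w_n(\cdot,0)\to w(\cdot,0)$ in $L^q(\R^N)$ for all $q\in[2,\2)$. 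Then, arguing exactly as in Lemma \ref{Final}: from $\langle J'_{\e_n}(w_{\e_n}),w_{\e_n}\rangle=o(1)$, dropping the nonnegative penalization term and absorbing the part of $\int g(\e_n x+x'_n,\tilde w_n)\tilde w_n$ over $\{\e_n x+x'_n\notin\Lambda\}$ via $\wh f(t)t\le\frac{V_1}{2}t^2$, one gets $M(\|\tilde w_n\|^2_{\x})\|\tilde w_n\|^2_{\x}+\int_{\R^N}V(\e_n x+x'_n)\tilde w_n^2(\cdot,0)\le\int_{\R^N}f(\tilde w_n(\cdot,0))\tilde w_n(\cdot,0)+o(1)$. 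Combining this inequality with Lemma $2.4$ in \cite{CW} (applied with $P(t)=f(t)t$, $p_1=2$, $p_2=\2$) and the Pohozaev identity for $w$ (valid by the regularity hypotheses on $f$) sandwiches $\alpha_0\|w\|^2_{\x}+V(x_0)|w(\cdot,0)|_2^2$ between the $\liminf$ and the $\limsup$ of the left-hand side, forcing all intervening inequalities to be equalities. This yields $\|\tilde w_n\|_{\x}\to\|w\|_{\x}$ and $\tilde w_n(\cdot,0)\to w(\cdot,0)$ in $L^2(\R^N)$, hence $\tilde w_n\to w$ in $\X$, and $\alpha_0=M(\|w\|^2_{\x})$; the monotonicity $(M4)$ is exactly what makes the sandwich close.

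Finally, since $\alpha_0=M(\|w\|^2_{\x})$, the nontrivial $w$ is a critical point of the autonomous Kirchhoff functional $L_{V(x_0)}$, so — after a translation tending to $0$ that brings its maximum to the origin (admissible because $w$ is $Cd_0$-close to $\mathcal{S}_{V_0}$, whose members attain their maximum at $0$) — $w\in\mathcal{T}_{V(x_0)}$ and $L_{V(x_0)}(w)\ge b_{V(x_0)}$. On the other hand, strong convergence, the mass-trapping, $G_{\e_n}\le F$ and $Q_{\e_n}\ge0$ give $L_{V(x_0)}(w)\le\lim_n J_{\e_n}(w_{\e_n})\le c_{V_0}=b_{V_0}$; since $x_0\in\Lambda$ forces $V(x_0)\ge V_0$ and $m\mapsto b_m$ is strictly increasing (Remark \ref{REMARK}), we conclude $V(x_0)=V_0$, i.e. $x_0\in\M$, and $L_{V_0}(w)=b_{V_0}$, i.e. $w\in\mathcal{S}_{V_0}$. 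Absorbing the above translation into $z_n$ (which preserves $\e_n z_n\to x_0$) and using $\phi_0(\e_n\cdot)\to1$, $\tilde w_n\to w$ in $\X$, $\tilde w_n(\cdot,0)\to w(\cdot,0)$ in $L^2(\R^N)$, the local uniform convergence $V(\e_n\cdot+x'_n)\to V_0$, the support properties of $\phi_0$ (on which $V_{\e_n}$ stays bounded), and the polynomial decay of $w(\cdot,0)$, I obtain $\|w_{\e_n}-\phi_0(\e_n\sqrt{|x-z_n|^2+y^2}/\beta)\,w(x-z_n,y)\|_{\e_n}\to0$, which is the assertion with $W:=w\in\mathcal{S}_{V_0}$.
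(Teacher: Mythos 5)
Your outline follows the same overall strategy as the paper --- weak limit, identify the limiting Kirchhoff problem, sandwich argument to upgrade to strong convergence and identify $\alpha_0=M(\|w\|^2_{\x})$, then conclude $V(x_0)=V_0$ by the monotonicity of $m\mapsto c_m$ --- but there is a genuine gap at the step where you claim strong $L^q(\R^N)$ convergence of $\tilde w_n(\cdot,0)$. You assert that ``the penalized equation admits no nontrivial solution outside $\Lambda$'' and that, together with mass-trapping, ``a Brezis--Lieb/concentration-compactness splitting rules out any bubble escaping to spatial infinity.'' This reasoning is not sufficient. A runaway bubble in the $\tilde w_n$ variables occurs at $x=z_\e$ with $|z_\e|\to\infty$; but because $\Lambda_{\e}$ has diameter $\sim \e^{-1}$, one can perfectly well have $|z_\e|\to\infty$ while $\e z_\e\to z_0\in\overline{\Gamma^0_{3\beta}}(0)$, so the bubble sits at the physical point $x_0+z_0\in\M^{4\beta}\subset\Lambda$, squarely \emph{inside} $\Lambda$ and untouched by the penalization. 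Neither the mass-trapping estimate $\int_{\R^N\setminus\Lambda_\e}w_\e^2\to0$ nor the fact that $\widehat f(t)t\le\tfrac{V_1}{2}t^2$ for $x\notin\Lambda$ says anything about this regime.

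What is actually needed --- and what the paper does --- is to use the hypothesis $w_{\e}\in E_\e^{d_0}$ \emph{quantitatively}. The paper first performs an annular decomposition $w_\e=w_{\e,1}+w_{\e,2}+o(1)$ via cut-offs on a carefully chosen annulus (so the transition region contributes $o(1)$), and shows $\|w_{\e,2}\|_\e\le 4d_0+o(1)$, whence $\|w_{\e,2}\|_\e\to0$ once $d_0$ is small. Then, if a nonvanishing bubble did form at $z_\e$ with $|z_\e|\to\infty$, it would solve a limiting equation with coefficient $V(x_0+z_0)$ and thus have energy at least $\tfrac12 d_{V(x_0+z_0)}>0$; on the other hand, since $w_\e$ is within $2d_0$ of $\phi_0(\cdot)W_0(\cdot-x_\e/\e)$, which is concentrated near $x_\e/\e$, the gradient mass of $w_\e$ in $B^+_R(z_\e+x_\e/\e,0)$ is $O(d_0)+o(1)$. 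Choosing $d_0$ small yields the contradiction. This interplay between the a priori lower bound on the bubble's energy and the $d_0$-closeness to the manifold $E_\e$ is the heart of the vanishing argument and is missing from your sketch: you only use the smallness of $d_0$ to conclude $w\ne0$, never to exclude escaping mass. A second, smaller issue is that your sandwich argument pairs $J'_{\e_n}(w_{\e_n})$ against the full $w_{\e_n}$, whose far-field tail is not controlled at that stage; the paper pairs against the truncated $w_{\e,1}$ precisely so that the Kirchhoff coefficient couples only with the near-field gradient, and then uses $\|w_\e-w_{\e,1}\|_\e\to0$ afterwards to transfer the conclusion. The rest of your outline (limiting equation, Pohozaev identity, Lemma 2.4 of \cite{CW}, $V(x_0)=V_0$ via Remark \ref{REMARK}, translation absorption into $z_n$, decay estimate to conclude the $\|\cdot\|_{\e_n}$-convergence) is in line with the paper.
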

\begin{proof}
For simplicity, we write $\e$ instead of $\e_{n}$ and the same will be done for the subsequences.
By the definition of $E_{\e}^{d_{0}}$ and the compactness of $\mathcal{S}_{V_{0}}$ and $\M^{\beta}$, there exist $W_{0}\in \mathcal{S}_{V_{0}}$ and $(x_{\e})\subset \M^{\beta}$ such that for all $\e>0$ small enough 
\begin{align}\label{3.2}
\left\|w_{\e} -\phi_{0}\left(\frac{\e}{\beta}\sqrt{\left|x-\frac{x_{\e}}{\e}\right|^{2}+y^{2}}\right) W_{0}\left(x-\frac{x_{\e}}{\e}, y\right)\right\|_{\e}\leq 2d_{0},
\end{align}
and, as $\e\ri 0$,
$$
x_{\e}\ri x_{0}\in \M^{\beta}.
$$
In what follows, we prove that there exist $(w_{\e,1}), (w_{\e,2})\subset X_{\e}$, $(k_{\e}), (j_{\e})\subset \mathbb{N}$
such that
\begin{compactenum}[$(i)$]
\item $k_{\e}\leq \sqrt{\beta_{\e}/5\e}$ and $k_{\e}\ri \infty$ as $\e\ri 0$, $0\leq j_{\e}\leq k_{\e}-1$, $|w_{\e,1}|, |w_{\e, 2}|\leq |w_{\e}|$,
\item $w_{\e,1}=w_{\e}$ in $B^{+}_{(\frac{2\beta_{\e}}{\e})+(5j_{\e}+1)k_{\e}}(\frac{x_{\e}}{\e},0)$, \, $w_{\e,2}=w_{\e}$ in $\R^{N+1}_{+}\setminus B^{+}_{(\frac{2\beta_{\e}}{\e})+(5j_{\e}+4)k_{\e}}(\frac{x_{\e}}{\e},0)$
\item $supp(w_{\e,1})\subset \overline{B^{+}_{(\frac{2\beta_{\e}}{\e})+(5j_{\e}+2)k_{\e}}(\frac{x_{\e}}{\e},0)}$, \, $supp(w_{\e,2})\subset \R^{N+1}_{+}\setminus B^{+}_{(\frac{2\beta_{\e}}{\e})+(5j_{\e}+3)k_{\e}}(\frac{x_{\e}}{\e},0)$,
\item $\|w_{\e}-w_{\e,1}-w_{\e,2}\|_{\e}\ri 0$ as $\e\ri 0$,
\item $\|w_{\e}\|_{X^{s}_{0}(B_{j_{\e,\e}})}\ri 0$ and 
$$
\iint_{B_{j_{\e}, \e}} y^{1-2s} |w_{\e}|^{2\gamma}\, dx dy\ri 0 \mbox{ as } \e\ri 0,
$$
where 
$$
B_{j_{\e},\e}:=\overline{B^{+}_{(\frac{2\beta_{\e}}{\e})+5(j_{\e}+1)k_{\e}}\left(\frac{x_{\e}}{\e},0\right)}\setminus B^{+}_{(\frac{2\beta_{\e}}{\e})+5j_{\e} k_{\e}}\left(\frac{x_{\e}}{\e},0\right),
$$
and
$$
\int_{\Gamma_{j_{\e}, \e}} V_{\e}(x) |w_{\e}(x,0)|^{2}\, dx\ri 0 \mbox{ as } \e\ri 0,
$$
where
$$
\Gamma_{j_{\e},\e}:=\overline{\Gamma^{0}_{(\frac{2\beta_{\e}}{\e})+5(j_{\e}+1)k_{\e}}\left(\frac{x_{\e}}{\e}\right)}\setminus \Gamma^{0}_{(\frac{2\beta_{\e}}{\e})+5j_{\e} k_{\e}}\left(\frac{x_{\e}}{\e}\right).
$$
\end{compactenum} 
Let $k_{\e}\in \mathbb{N}$ be such that $k_{\e}\leq \sqrt{\frac{\beta}{5\e}}$ and $k_{\e}\ri \infty$ as $\e\ri 0$, and put $\widetilde{w}_{\e}(x,y):=w_{\e}(x+\frac{x_{\e}}{\e},y)$. By \eqref{3.2}, Lemma \ref{lem2.1}-$(i)$ and $\phi_{0}(\e \sqrt{|x|^{2}+y^{2}}/\beta)=0$ in $\R^{N+1}_{+}\setminus B^{+}_{\frac{2\beta}{\e}}(0,0)$ we have 
\begin{align}\label{3.3}
&\iint_{\R^{N+1}_{+}\setminus B^{+}_{\frac{2\beta}{\e}}(0,0)} y^{1-2s}|\nabla \tilde{w}_{\e}|^{2}\, dx dy+\int_{\R^{N}\setminus \Gamma^{0}_{\frac{2\beta}{\e}}(0)} V(\e x+x_{\e}) |\tilde{w}_{\e}(x,0)|^{2}\, dx \nonumber\\
&\quad +\left(\iint_{\R^{N+1}_{+}\setminus B^{+}_{\frac{2\beta}{\e}}(0,0)} y^{1-2s}|\tilde{w}_{\e}|^{2\gamma}\, dx dy\right)^{\frac{1}{\gamma}}\leq Cd_{0}.
\end{align}
For all $j=0, 1, \dots, k_{\e}-1$, we set 
$$
\tilde{B}_{j, \e}:=\overline{B^{+}_{(\frac{2\beta_{\e}}{\e})+5(j+1)k_{\e}}}(0,0)\setminus B^{+}_{(\frac{2\beta_{\e}}{\e})+5j k_{\e}}(0,0) \mbox{ and } \tilde{\Gamma}_{j, \e}:=\overline{\Gamma^{0}_{(\frac{2\beta_{\e}}{\e})+5(j+1)k_{\e}}}(0)\setminus \Gamma^{0}_{(\frac{2\beta_{\e}}{\e})+5j k_{\e}}(0). 
$$
From \eqref{3.3} we deduce that
\begin{align*}
&\sum_{j=0}^{k_{\e}-1} \iint_{\tilde{B}_{j, \e}} y^{1-2s}|\nabla \tilde{w}_{\e}|^{2}\, dx dy+\sum_{j=0}^{k_{\e}-1} \int_{\tilde{\Gamma}_{j, \e}} V(\e x+x_{\e}) |\tilde{w}_{\e}(x,0)|^{2}\, dx\\
&\quad+\sum_{j=0}^{k_{\e}-1} \left(\iint_{\tilde{B}_{j, \e}} y^{1-2s}|\tilde{w}_{\e}|^{2\gamma}\, dx dy\right)^{\frac{1}{\gamma}}\leq Cd_{0}.
\end{align*}
Hence, there exists $j_{\e}\in\{0, 1, \dots, k_{\e}-1\}$ such that
\begin{align}\label{3.4}
&\iint_{\tilde{B}_{j_{\e}, \e}} y^{1-2s}|\nabla \tilde{w}_{\e}|^{2}\, dx dy+ \int_{\tilde{\Gamma}_{j_{\e}, \e}} V(\e x+x_{\e}) |\tilde{w}_{\e}(x,0)|^{2}\, dx \nonumber\\
&\quad+\left(\iint_{\tilde{B}_{j_{\e}, \e}} y^{1-2s}|\tilde{w}_{\e}|^{2\gamma}\, dx dy\right)^{\frac{1}{\gamma}}\leq Cd_{0}/k_{\e}\ri 0 \mbox{ as } \e\ri 0.
\end{align}
Define two cut-off functions $(\xi_{\e, 1})$ and $(\xi_{\e,2})$ such that
\begin{equation*}
\xi_{\e,1}:=\left\{
\begin{array}{ll}
1 &\mbox{ in } \overline{B^{+}_{\frac{2\beta}{\e}+(5j_{\e}+1)k_{\e}}(0,0)}, \\
\medskip
0  &\mbox{ in } \R^{N+1}_{+}\setminus B^{+}_{\frac{2\beta}{\e}+(5j_{\e}+2)k_{\e}}(0,0),
\end{array}
\right.
\end{equation*}
and
\begin{equation*}
\xi_{\e,2}:=\left\{
\begin{array}{ll}
0 &\mbox{ in } \overline{B^{+}_{\frac{2\beta}{\e}+(5j_{\e}+3)k_{\e}}(0,0)}, \\
\medskip
1  &\mbox{ in } \R^{N+1}_{+}\setminus B^{+}_{\frac{2\beta}{\e}}+(5j_{\e}+4)k_{\e}(0,0),
\end{array}
\right.
\end{equation*}
and $0\leq \xi_{\e,1}, \xi_{\e,2}\leq 1$, $|\nabla \xi_{\e,1}|, |\nabla \xi_{\e,2}|\leq \frac{C}{k_{\e}}$,  
and we set
$$
\tilde{w}_{\e,i}:=\xi_{\e, i} \tilde{w}_{\e} \mbox{ and } w_{\e,i}(x,y):=\tilde{w}_{\e,i}\left(x-\frac{x_{\e}}{\e}, y\right) \mbox{ for } i=1,2.
$$
Since $w_{\e}\in X_{\e}$, we see that $w_{\e,i}\in X_{\e_{i}}$ for $i=1, 2$. Hence, $(i)$-$(iii)$ hold true.
Now, direct calculations show that
\begin{align*}
&\|w_{\e}-w_{\e,1}-w_{\e,2}\|^{2}_{\e}\leq C \iint_{B^{+}_{\frac{2\beta}{\e}+(5j_{\e}+4)k_{\e}}(0,0)\setminus B^{+}_{\frac{2\beta}{\e}+(5j_{\e}+1)k_{\e}}(0,0)}  y^{1-2s} |\nabla \tilde{w}_{\e}|^{2}\, dx dy \\
&+C \int_{\Gamma^{0}_{\frac{2\beta}{\e}+(5j_{\e}+4)k_{\e}}(0)\setminus \Gamma^{0}_{\frac{2\beta}{\e}+(5j_{\e}+1)k_{\e}}(0)}  V(\e x+x_{\e}) |\tilde{w}_{\e}|^{2}\, dx \\
&+C \iint_{B^{+}_{\frac{2\beta}{\e}+(5j_{\e}+2)k_{\e}}(0,0)\setminus B^{+}_{\frac{2\beta}{\e}+(5j_{\e}+1)k_{\e}}(0,0)}  y^{1-2s} |\nabla \xi_{\e,1}|^{2} |\tilde{w}_{\e}|^{2} \, dx dy \\
&+C \iint_{B^{+}_{\frac{2\beta}{\e}+(5j_{\e}+4)k_{\e}}(0,0)\setminus B^{+}_{\frac{2\beta}{\e}+(5j_{\e}+3)k_{\e}}(0,0)}  y^{1-2s} |\nabla \xi_{\e,2}|^{2} |\tilde{w}_{\e}|^{2} \, dx dy\\
&=:(I)_{\e}+(II)_{\e}+(III)_{\e}+(IV)_{\e}.
\end{align*}
Using \eqref{3.4} we deduce that $(I)_{\e}, (II)_{\e}=o(1)$. Moreover, arguing as in \eqref{3.1}, it follows from \eqref{3.4} that
\begin{align*}
(III)_{\e}\leq C\left(\iint_{B^{+}_{\frac{2\beta}{\e}+(5j_{\e}+2)k_{\e}}(0,0)\setminus B^{+}_{\frac{2\beta}{\e}+(5j_{\e}+1)k_{\e}}(0,0)} y^{1-2s} |\tilde{w}_{\e}|^{2\gamma} \, dx dy  \right)^{\frac{1}{\gamma}}=o(1).
\end{align*}
In a similar fashion we can prove that $(IV)_{\e}=o(1)$. In conclusion, $(iv)$ holds true. Moreover, by \eqref{3.4}, we see that $(v)$ is  satisfied.
Taking into account $(i)$-$(v)$, $(f_1)$-$(f_2)$ and the boundedness of $(w_{\e})$ in $X_{\e}$ we get
\begin{align}
&\|w_{\e}\|^{2}_{X^{s}(\R^{N+1}_{+})}=\|w_{\e,1}\|^{2}_{X^{s}(\R^{N+1}_{+})}+\|w_{\e,2}\|^{2}_{X^{s}(\R^{N+1}_{+})}+o(1), \label{G1}\\
&\int_{\R^{N}} V_{\e}(x) w_{\e}^{2}(x,0)\, dx=\int_{\R^{N}} V_{\e}(x) w_{\e,1}^{2}(x,0)\, dx+\int_{\R^{N}} V_{\e}(x) w_{\e,2}^{2}(x,0)\, dx+o(1), \label{G2}\\
&\int_{\R^{N}} F(w_{\e}(x,0))\, dx=\int_{\R^{N}} F(w_{\e,1}(x,0))\, dx+\int_{\R^{N}} F(w_{\e,2}(x,0))\, dx+o(1). \label{G3}
\end{align}
By $(M1)$, we know that
$$
\wh{M}(t_{1}+t_{2})= \wh{M}(t_{1})+\int_{t_{1}}^{t_{1}+t_{2}} M(\tau)\, d\tau\geq  \wh{M}(t_{1})+m_{0} t_{2},
$$
which together with \eqref{G1}-\eqref{G3}, the boundedness of $(w_{\e})$ in $X_{\e}$ and $G(x,t)\leq F(t)$ implies that
\begin{align}\label{3.5}
J_{\e}(w_{\e})\geq I_{\e}(w_{\e,1})+\frac{m_{0}}{2}\|w_{\e,2}\|^{2}_{X^{s}(\R^{N+1}_{+})} +\frac{1}{2}\int_{\R^{N}} V_{\e}(x) w_{\e,2}^{2}(x,0)\, dx-\int_{\R^{N}} F(w_{\e,2}(x,0))\, dx+o(1).
\end{align}
Now, we prove that $\|w_{\e,2}\|_{\e}\ri 0$ as $\e\ri 0$. By \eqref{3.2}, $(iv)$ and the definition of $w_{\e,2}$, we see that
\begin{align*}
\|w_{\e,2}\|_{\e}&\leq \left\|w_{\e,1}- \phi_{0}\left(\frac{\e}{\beta}\sqrt{\left|x-\frac{x_{\e}}{\e}\right|^{2}+y^{2}}\right) W_{0}\left(x-\frac{x_{\e}}{\e}, y\right)\right\|_{\e}+2d_{0}+o(1)\\
&=\left\| w_{\e,1}- \phi_{0}\left(\frac{\e}{\beta}\sqrt{\left|x-\frac{x_{\e}}{\e}\right|^{2}+y^{2}}\right) W_{0}\left(x-\frac{x_{\e}}{\e}, y\right)\right\|_{X_{\e}\left(B^{+}_{\frac{2\beta}{\e}+(5j_{\e}+2)k_{\e}}(0,0)\right)} +2d_{0}+o(1)\\
&\leq \|w_{\e,2}\|_{X_{\e}\left(B^{+}_{\frac{2\beta}{\e}+(5j_{\e}+2)k_{\e}}(0,0)\right)}+2d_{0}+o(1) \\
&=4d_{0}+o(1),
\end{align*}
which yields
\begin{align}\label{GRIGN}
\limsup_{\e\ri 0} \|w_{\e,2}\|_{\e}\leq 4d_{0}.
\end{align}
On the other hand, using $\langle J'_{\e}(w_{\e}), w_{\e,1}\rangle=o(1)$, $\langle Q'_{\e}(w_{\e}), w_{\e,2}\rangle=\langle Q'_{\e}(w_{\e,2}), w_{\e,2}\rangle\geq 0$, $(M1)$, $(V_1)$, $(f_1)$-$(f_2)$, $(iii)$, $(iv)$, \eqref{GRIGN}, the boundedness of $(w_{\e})$ in $X_{\e}$, we get
\begin{align*}
&m_{0} \iint_{\R^{N+1}_{+}} y^{1-2s} |\nabla w_{\e,2}|^{2} \, dx dy+\int_{\R^{N}} V_{\e}(x) w_{\e,2}^{2}(x,0)\, dx\\
&\leq M(\|w_{\e}\|^{2}_{\e}) \iint_{\R^{N+1}_{+}} y^{1-2s} |\nabla w_{\e,2}|^{2} \, dx dy+\int_{\R^{N}} V_{\e}(x) w_{\e,2}^{2}(x,0)\, dx\\
&\leq M(\|w_{\e}\|^{2}_{\e}) \iint_{\R^{N+1}_{+}} y^{1-2s} |\nabla w_{\e,2}|^{2} \, dx dy+\int_{\R^{N}} V_{\e}(x) w_{\e,2}^{2}(x,0)\, dx+\langle Q'(w_{\e,2}), w_{\e,2}\rangle\\
&= \int_{\R^{N}} g_{\e}(x, w_{\e,2}(x,0)) w_{\e,2}(x,0)\, dx+o(1)\\
&\leq \delta \int_{\R^{N}} w_{\e,2}^{2}(x,0)\, dx+C_{\delta} \int_{\R^{N}} |w_{\e,2}(x,0)|^{\2}+o(1) \\
&\leq \frac{\delta}{V_{1}} \int_{\R^{N}} V_{\e}(x) w_{\e,2}^{2}(x,0)\, dx+C_{\delta} |w_{\e,2}(x,0)|_{\2}^{\2}+o(1).
\end{align*}
Then, choosing $\delta>0$ sufficiently small and using Lemma \ref{Sobolev} we deduce that $\|w_{\e,2}\|^{2}_{\e}\leq C\|w_{\e,2}\|^{\2}_{\e}+o(1)$. Taking $d_{0}>0$ small enough, we deduce that $\|w_{\e,2}\|_{\e}=o(1)$. Hence, in view of \eqref{3.5}, we have
\begin{align}\label{3.6}
J_{\e}(w_{\e})\geq I_{\e}(w_{\e,1})+o(1).
\end{align}
Up to a subsequence, we can find $\tilde{w}\in X^{1,s}(\R^{N+1}_{+})$ such that
\begin{align}\label{3.8}
\tilde{w}_{\e,1}\rightharpoonup \tilde{w} \mbox{ in }  X^{1,s}(\R^{N+1}_{+}) \mbox{ and } \tilde{w}_{\e,1}(\cdot, 0)\rightharpoonup \tilde{w}(\cdot, 0) \mbox{ in }  L^{q}_{loc}(\R^{N}) \quad \forall q\in [1, \2).
\end{align}
In what follows we show that
\begin{align}\label{3.25}
\tilde{w}_{\e,1}(\cdot, 0)\rightarrow \tilde{w}(\cdot, 0) \mbox{ in }  L^{q}(\R^{N}) \quad \forall q\in (2, \2).
\end{align}
Indeed, by vanishing Lions-type lemma (see Lemma 3.3 in \cite{HZ}), we assume by contradiction that there exists $r>0$ such that
\begin{align*}
\liminf_{\e\ri 0} \sup_{z\in \R^{N}} \int_{\Gamma^{0}_{1}(z)} |\tilde{w}_{\e,1}(x,0)-\tilde{w}(x,0)|^{2}\, dx=2r>0.
\end{align*}
Then, for $\e>0$ small, there exists $z_{\e}\in \R^{N}$ such that
\begin{align}\label{3.9}
\int_{\Gamma^{0}_{1}(z_{\e})} |\tilde{w}_{\e,1}(x,0)-\tilde{w}(x,0)|^{2}\, dx\geq r>0.
\end{align}
By \eqref{3.8} we see that $(z_{\e})$ is unbounded, so, up to a subsequence, $|z_{\e}|\ri \infty$. 
Then, by \eqref{3.9}, 
\begin{align}\label{3.99}
\liminf_{\e\ri 0} \int_{\Gamma^{0}_{1}(z_{\e})} |\tilde{w}_{\e,1}(x,0)|^{2}\, dx\geq r>0.
\end{align}
Since $\xi_{\e,1}(x,0)=0$ for $|x|\geq (\frac{2\beta}{\e})+(5j_{\e}+2)k_{\e}$, we deduce that $|z_{\e}|<(\frac{2\beta}{\e})+(5j_{\e}+3)k_{\e}$ for $\e>0$ small enough. Therefore, we may assume that
\begin{align}\label{CONV}
\e z_{\e}\ri z_{0}\in \overline{\Gamma_{3\beta}^{0}}(0) \quad \mbox{ and } \quad \bar{w}_{\e}(x, y):= \tilde{w}_{\e, 1}(x+ z_{\e}, y)\rightharpoonup \bar{w}(x, y) \, \mbox{ in } X^{1, s}(\R^{N+1}_{+}).
\end{align}
Now, we show that $\bar{w}$ satisfies
\begin{align}\label{Pbarw}
\left\{
\begin{array}{ll}
-\dive(y^{1-2s} \nabla \bar{w})=0 &\mbox{ in } \R^{N+1}_{+}, \\
\frac{1}{\alpha_{0}} \frac{\partial \bar{w}}{\partial \nu^{1-2s}}=-V(x_{0}+z_{0}) \bar{w}(\cdot,0)+f(\bar{w}(\cdot,0)) &\mbox{ in } \R^{N},
\end{array}
\right.
\end{align}
where
$$
\alpha_{0}:=\lim_{\e\ri 0} M(\|w_{\e}\|^{2}_{\x}).
$$
Fix $k\geq 1$. Since $x_{0}+z_{0}\in \M^{4\beta}\subset \Lambda$, there exists $n_{0}=n_{0}(k)\in \mathbb{N}$ such that $\e x+x_{\e}+\e z_{\e}\in \Lambda$ for all $x\in \Gamma^{0}_{k}(0)$ and $n\geq n_{0}$. By the definition of $\chi_{\e}$ and $g(x,t)$ it follows that
$$
\left\langle Q'(w_{\e}),\phi\left(\cdot-\frac{x_{\e}}{\e}-z_{\e}\right)\right\rangle=0 \mbox{ and } g(\e x+x_{\e}+\e z_{\e}, t)\phi=f(t)\phi,
$$
for all $n\geq n_{0}$ and $\phi\in C^{\infty}_{c}(B^{+}_{k}(0, 0)\cup \Gamma^{0}_{k}(0))$. From $\langle J'_{\e}(w_{\e}), \phi(\cdot-\frac{x_{\e}}{\e}-z_{\e})\rangle=o(1)$, $(iv)$ and $\|w_{\e,2}\|_{\e}=o(1)$ we can deduce that
\begin{align*}
o(1)&=M(\|w_{\e}\|^{2}_{X^{s}(\R^{N+1}_{+})})\iint_{\R^{N+1}_{+}} y^{1-2s} \nabla \bar{w}_{\e} \nabla \phi\, dx dy\\
&\quad+\int_{\R^{N}} V(\e x+x_{\e}+\e z_{\e}) \bar{w}_{\e}(x,0) \phi(x,0)\, dx-\int_{\R^{N}} f(\bar{w}_{\e}(x,0)) \phi(x,0)\, dx. 
\end{align*}
Note that by $(M1)$ and the boundedness of $(w_{\e})$ in $X_{\e}$ it holds $m_{0}\leq \alpha_{0}\leq C$. Then, by \eqref{CONV} and the arbitrariness of $k$ we get
\begin{align*}
0=\alpha_{0}\iint_{\R^{N+1}_{+}} y^{1-2s} \nabla \bar{w} \nabla \phi\, dx dy+\int_{\R^{N}} V(x_{0}+z_{0}) \bar{w}(x,0) \phi(x,0)\, dx-\int_{\R^{N}} f(\bar{w}(x,0)) \phi(x,0)\, dx,
\end{align*}
for all $\phi\in C^{\infty}_{c}(\R^{N+1}_{+})$, which proves the claim.

Since $\bar{w}\neq 0$ by \eqref{3.99}, it follows from the Pohozaev identity that
\begin{align}\label{3.999}
d_{V(x_{0}+z_{0})}\leq \frac{s}{N}\alpha_{0} \int_{\R^{N+1}_{+}} y^{1-2s} |\nabla \bar{w}|^{2}\, dx dy,
\end{align}
where
$$
d_{V(x_{0}+z_{0})}:=\inf\left\{L_{\alpha_{0}, V(x_{0}+z_{0})}(u): u\in X^{1,s}(\R^{N+1}_{+})\setminus\{0\}: L'_{\alpha_{0}, V(x_{0}+z_{0})}(u)=0\right\}
$$
and
$$
L_{\alpha_{0}, V(x_{0}+z_{0})}(u):=\frac{\alpha_{0}}{2} \|u\|^{2}_{X^{s}(\R^{N+1}_{+})}+\frac{V(x_{0}+z_{0})}{2} \int_{\R^{N}} u^{2}(x,0)\, dx-\int_{\R^{N}} F(u(x,0))\, dx.
$$
We observe that, by the results in \cite{Aade}, it turns out that $d_{V(x_{0}+z_{0})}>0$.
Then, for $R>0$ large enough we get
\begin{align*}
\liminf_{\e\ri 0} \frac{s}{N} \alpha_{0} \iint_{B_{R}^{+}(z_{\e}+(\frac{x_{\e}}{\e}), 0)} y^{1-2s} |\nabla w_{\e}|^{2}\, dx dy&=\liminf_{\e\ri 0} \frac{s}{N} \alpha_{0} \iint_{B_{R}^{+}(z_{\e}+(\frac{x_{\e}}{\e}), 0)} y^{1-2s} |\nabla w_{\e,1}|^{2}\, dx dy \nonumber \\
&=\liminf_{\e\ri 0} \frac{s}{N} \alpha_{0} \iint_{B_{R}^{+}(0, 0)} y^{1-2s} |\nabla \bar{w}_{\e}|^{2}\, dx dy \nonumber \\
&\geq \frac{s}{N} \alpha_{0} \iint_{B_{R}^{+}(0, 0)} y^{1-2s} |\nabla \bar{w}|^{2}\, dx dy \nonumber \\
&\geq  \frac{1}{2} \frac{s}{N} \alpha_{0}  \iint_{\R^{N+1}_{+}} y^{1-2s} |\nabla \bar{w}|^{2}\, dx dy \nonumber \\
&\geq \frac{1}{2} d_{V(x_{0}+z_{0})}>0.
\end{align*}
On the other hand, arguing as in \eqref{3.1}, it follows from \eqref{3.2} and $|z_{\e}|\ri \infty$ that
\begin{align*}
&\alpha_{0} \iint_{B_{R}^{+}(z_{\e}+(\frac{x_{\e}}{\e}), 0)} y^{1-2s} |\nabla w_{\e}|^{2}\, dx dy \\
&\leq C \iint_{B_{R}^{+}(z_{\e}+(\frac{x_{\e}}{\e}), 0)} y^{1-2s} \left|\nabla \left(\phi_{0}\left(\frac{\e}{\beta} \sqrt{\left|x-\frac{x_{\e}}{\e}\right|^{2}+y^{2}}\right) W_{0}\left(x-\frac{x_{\e}}{\e}, y\right)\right)\right|^{2}\, dx dy+Cd_{0} \\
&\leq C \iint_{B_{R}^{+}(z_{\e},0)} y^{1-2s} |\nabla W_{0}|^{2}\, dx dy+C\e^{2} \iint_{B_{R}^{+}(z_{\e},0)} y^{1-2s} |W_{0}|^{2}\, dx dy+Cd_{0} \\
&\leq C \iint_{B_{R}^{+}(z_{\e},0)} y^{1-2s} |\nabla W_{0}|^{2}\, dx dy+C\e^{2}R^{2} \left(\iint_{B_{R}^{+}(z_{\e},0)} y^{1-2s} |W_{0}|^{2\gamma} \, dx dy  \right)^{\frac{1}{2\gamma}} +Cd_{0}\\
&=Cd_{0}+o(1)
\end{align*}
which leads to a contradiction for $d_{0}>0$ small enough.
Consequently, \eqref{3.25} holds true. \\
Then, by $(f_1)$-$(f_2)$ and \eqref{3.25}, we have as $\e\ri 0$
\begin{align}\label{Ff}
\int_{\R^{N}}  F(\tilde{w}_{\e,1}(x,0))\, dx\ri \int_{\R^{N}}  F(\tilde{w}(x,0))\, dx \mbox{ and } \int_{\R^{N}}  f(\tilde{w}_{\e,1}(x,0))\tilde{w}_{\e,1}(x,0)\, dx\ri \int_{\R^{N}}  f(\tilde{w}(x,0))\tilde{w}(x,0)\, dx.
\end{align}
Moreover, we can see that as $\e\ri 0$
\begin{align}\label{TASSONI}
\int_{\R^{N}} g(\e x+x_{\e}, \tilde{w}_{\e}(x,0))\tilde{w}_{\e,1}(x,0)\, dx\ri \int_{\R^{N}} f(\tilde{w}(x,0))\tilde{w}(x,0)\, dx.
\end{align}
Indeed, using $x_{\e}\ri x_{0}\in \M^{\beta}\subset \Lambda$ and the definition of $\tilde{w}_{\e,1}$, for all $x\in \Gamma^{0}_{\frac{2\beta}{\e}+(5j_{\e}+2)k_{\e}}(0)$ we have
\begin{align}\label{VIDAL1}
g(\e x+x_{\e}, \tilde{w}_{\e}(x,0))\tilde{w}_{\e,1}(x,0)=f(\tilde{w}_{\e}(x,0))\tilde{w}_{\e,1}(x,0),
\end{align} 
since $\e x+x_{\e}\in \M^{4\beta}\subset \Lambda$ for all $x\in \Gamma^{0}_{\frac{2\beta}{\e}+(5j_{\e}+2)k_{\e}}(0)$ and $\e>0$ small. Furthermore, as $\e\ri 0$
\begin{align}\label{VIDAL2}
\int_{\R^{N}} f(\tilde{w}_{\e}(x,0))\tilde{w}_{\e,1}(x,0)\, dx= \int_{\R^{N}} f(\tilde{w}_{\e,1}(x,0))\tilde{w}_{\e,1}(x,0)\, dx+o(1),
\end{align}
because $(f_1)$,$(f_2)$ and \eqref{3.25} yield 
\begin{align*}
\limsup_{\e\ri 0} &\left|\int_{\R^{N}} [f(\tilde{w}_{\e}(x,0))-f(\tilde{w}_{\e,1}(x,0))]\tilde{w}_{\e,1}(x,0)\, dx  \right|\\
&=\limsup_{\e\ri 0} \left|\int_{\Gamma^{0}_{\frac{2\beta}{\e}+(5j_{\e}+2)k_{\e}}(0)\setminus \Gamma^{0}_{\frac{2\beta}{\e}+(5j_{\e}+1)k_{\e}}(0)} [f(\tilde{w}_{\e}(x,0))-f(\tilde{w}_{\e,1}(x,0))]\tilde{w}_{\e,1}(x,0)\, dx  \right| \\
&\leq \delta C+C_{\delta} \limsup_{\e\ri 0} |\tilde{w}_{\e,1}(\cdot,0)|_{L^{p+1}(\R^{N}\setminus \Gamma^{0}_{2\beta/\e}(0))} \\
&\leq \delta C+C_{\delta} \left[\limsup_{\e\ri 0} |\tilde{w}_{\e,1}(\cdot, 0)-\tilde{w}(\cdot, 0)|_{p+1}+\limsup_{\e\ri 0} \int_{\R^{N}\setminus \Gamma^{0}_{2\beta/\e}(0)} |\tilde{w}(x,0)|^{p+1}\, dx\right] \\
&=\delta C \quad \forall \delta>0.
\end{align*}
Gathering \eqref{Ff}, \eqref{VIDAL1} and \eqref{VIDAL2} we get \eqref{TASSONI}.\\
Now, we note that, arguing as before, $\tilde{w}$ satisfies
\begin{align}\label{Ptildew}
\left\{
\begin{array}{ll}
-\dive(y^{1-2s} \nabla \tilde{w})=0 &\mbox{ in } \R^{N+1}_{+}, \\
\frac{1}{\alpha_{0}} \frac{\partial \tilde{w}}{\partial \nu^{1-2s}}=-V(x_{0}) \tilde{w}(\cdot,0)+f(\tilde{w}(\cdot,0)) &\mbox{ in } \R^{N},
\end{array}
\right.
\end{align}
with 
$$
\alpha_{0}:=\lim_{\e\ri 0} M(\|w_{\e}\|^{2}_{X^{s}(\R^{N+1}_{+})})=\lim_{\e\ri 0} M(\|w_{\e,1}\|^{2}_{X^{s}(\R^{N+1}_{+})})=\lim_{\e\ri 0} M(\|\tilde{w}_{\e,1}\|^{2}_{X^{s}(\R^{N+1}_{+})}),
$$
where in the second identity we used that $\|w_{\e}-w_{\e,1}\|_{\e}=o(1)$ thanks to $(iv)$ and $\|w_{\e,2}\|_{\e}=o(1)$, 
and in the third one that  $\tilde{w}_{\e,1}(x,y)=w_{\e,1}(x+\frac{x_{\e}}{\e},y)$.\\
Taking into account \eqref{3.8}, \eqref{TASSONI}, \eqref{Ptildew}, $(iv)$ and $\langle J'_{\e}(w_{\e}), w_{\e,1}\rangle=o(1)$, $\|w_{\e,2}\|_{\e}=o(1)$, $\langle Q'_{\e}(w_{\e}), w_{\e,1}\rangle=0$ and $\tilde{w}_{\e,1}(x,y)=w_{\e,1}(x+\frac{x_{\e}}{\e},y)$, we have
\begin{align*}
&\alpha_{0} \iint_{\R^{N+1}_{+}} y^{1-2s} |\nabla \tilde{w}|^{2}\, dx dy+\int_{\R^{N}} V(x_{0})\tilde{w}^{2}(x,0)\, dx\\
&\leq \liminf_{\e\ri 0} \left[M(\|w_{\e}\|^{2}_{X^{s}(\R^{N+1}_{+})}) \iint_{\R^{N+1}_{+}} y^{1-2s} |\nabla \tilde{w}_{\e,1}|^{2}\, dx dy+\int_{\R^{N}} V(\e x+x_{\e})\tilde{w}_{\e,1}^{2}(x,0)\, dx\right]\\
&\leq \limsup_{\e\ri 0} \left[M(\|w_{\e}\|^{2}_{X^{s}(\R^{N+1}_{+})}) \iint_{\R^{N+1}_{+}} y^{1-2s} |\nabla \tilde{w}_{\e,1}|^{2}\, dx dy+\int_{\R^{N}} V(\e x+x_{\e})\tilde{w}_{\e,1}^{2}(x,0)\, dx\right]\\
&=\limsup_{\e\ri 0} \left[M(\|w_{\e}\|^{2}_{\x}) \iint_{\R^{N+1}_{+}} y^{1-2s} \nabla w_{\e} \nabla w_{\e,1}\, dx dy+\int_{\R^{N}} V_{\e}(x) w_{\e}(x,0) w_{\e,1}(x,0)\, dx\right] \\
&=\limsup_{\e\ri 0} \int_{\R^{N}} g_{\e}(x,w_{\e}(x,0))w_{\e,1}(x,0)\, dx\\ 
&=\lim_{\e\ri 0} \int_{\R^{N}} g(\e x+x_{\e}, \tilde{w}_{\e}(x,0))\tilde{w}_{\e,1}(x,0)\, dx\\
&=\int_{\R^{N}} f(\tilde{w}(x,0))\tilde{w}(x,0)\, dx\\
&=\alpha_{0} \iint_{\R^{N+1}_{+}} y^{1-2s} |\nabla \tilde{w}|^{2}\, dx dy+\int_{\R^{N}} V(x_{0})\tilde{w}^{2}(x,0)\, dx
\end{align*} 
which yields
\begin{align}\label{3.28}
\lim_{\e\ri 0} \iint_{\R^{N+1}_{+}} y^{1-2s} |\nabla w_{\e,1}|^{2}\, dx dy=\lim_{\e\ri 0} \iint_{\R^{N+1}_{+}} y^{1-2s} |\nabla \tilde{w}_{\e,1}|^{2}\, dx dy=\iint_{\R^{N+1}_{+}} y^{1-2s} |\nabla \tilde{w}|^{2}\, dx dy
\end{align}
and
\begin{align}\label{3.29}
\lim_{\e\ri 0}  \int_{\R^{N}} V(\e x)w_{\e,1}^{2}(x,0)\, dx=\lim_{\e\ri 0}  \int_{\R^{N}} V(\e x+x_{\e})\tilde{w}_{\e,1}^{2}(x,0)\, dx=\int_{\R^{N}} V(x_{0})\tilde{w}^{2}(x,0)\, dx.
\end{align}
In particular,
$$
\alpha_{0}=M(\|\tilde{w}\|^{2}_{X^{s}(\R^{N+1}_{+})}).
$$
Putting together \eqref{3.6}, \eqref{Ff}, \eqref{3.28}, \eqref{3.29} we deduce that
$$
\liminf_{\e\ri 0} J_{\e}(w_{\e})\geq \liminf_{\e\ri 0} I_{\e}(w_{\e,1})\geq L_{V(x_{0})}(\tilde{w})
$$
which combined with \eqref{3.2} gives
$$
L_{V(x_{0})}(\tilde{w})\leq c_{V_{0}}.
$$
Since $\tilde{w}\neq 0$, 
it follows from \eqref{4.3G} that 
$$
L_{V(x_{0})}(\tilde{w})\geq c_{V(x_{0})}.
$$ 
Then, using the fact that $x_{0}\in \M^{\beta}\subset \Lambda$, the above inequalities and the monotonicity of $m\mapsto c_{m}$ (see Remark \ref{REMARK}), we have that $V(x_{0})=V_{0}$ and thus $x_{0}\in \M$.
At this point, it is clear that there exist $W\in \mathcal{S}_{V_{0}}$ and $z_{0}\in \R^{N}$ such that $\tilde{w}(x,y)=W(x-z_{0}, y)$.\\
On the other hand, observing that
$$
V(x_{0})=V_{0}\leq V(\e x+x_{\e}) \mbox{ on } \Gamma^{0}_{\frac{2\beta}{\e}+(5j_{\e}+2)k_{\e}}(0),
$$
we combine \eqref{3.28} with \eqref{3.29} to infer that $\tilde{w}_{\e,1}\ri \tilde{w}$ in $X^{1,s}(\R^{N+1}_{+})$ as $\e \ri 0$, which implies that
\begin{align*}
\lim_{\e\ri 0} \left\|w_{\e} -\phi_{0}\left(\frac{\e}{\beta}\sqrt{\left|x-\left(\frac{x_{\e}}{\e}+z_{0}\right)\right|^{2}+y^{2}}\right) W\left(x-\left(\frac{x_{\e}}{\e}+z_{0}\right), y\right)\right\|_{\e}=0.
\end{align*}
This ends the proof of lemma.
\end{proof}

\noindent
\begin{cor}\label{cor5.1G}
For any $d\in (0, d_{0})$ there exist constants $\omega>0$ and $\e_{d}>0$ such that $\|J'_{\e}(w)\|_{(X_{\e})^{-1}}\geq \omega$ for $w\in J_{\e}^{d_{\e}}\cap (E_{\e}^{d_{0}}\setminus E^{d}_{\e})$ and $\e\in (0, \e_{d})$. Here $d_{\e}$ is defined as in \eqref{DE}.
\end{cor}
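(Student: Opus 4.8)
The statement is a standard ``discreteness away from the expected limit set'' estimate, of the type used in the penalization method of Byeon--Jeanjean and del Pino--Felmer. I would argue by contradiction. Suppose the conclusion fails: then there exist $d\in(0,d_0)$, sequences $\e_n\ri 0$ and $w_n\in J_{\e_n}^{d_{\e_n}}\cap(E_{\e_n}^{d_0}\setminus E_{\e_n}^{d})$ with $\|J'_{\e_n}(w_n)\|_{(X_{\e_n})^{-1}}\ri 0$. The point of the hypothesis $w_n\in J_{\e_n}^{d_{\e_n}}$ together with $\lim_{\e\ri0} d_\e=c_{V_0}$ (Lemma \ref{prop4.1G}) is precisely that $\limsup_n J_{\e_n}(w_n)\leq c_{V_0}$. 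Hence $(\e_n,w_n)$ satisfies \emph{all four} hypotheses of Lemma \ref{Klem}.

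\medskip
Applying Lemma \ref{Klem}, I obtain, up to a subsequence, points $(z_n)\subset\R^N$, a point $x_0\in\M$ and a function $W\in\mathcal{S}_{V_0}$ with
\begin{align*}
\lim_{n\ri\infty}|\e_n z_n-x_0|=0,\qquad \lim_{n\ri\infty}\Bigl\|w_n-\phi_0\bigl(\e_n\sqrt{|x-z_n|^2+y^2}/\beta\bigr)\,W(x-z_n,y)\Bigr\|_{\e_n}=0.
\end{align*}
Writing $x'_n:=\e_n z_n$, we have $x'_n\ri x_0\in\M\subset\M^\beta$, so for $n$ large $x'_n\in\M^\beta$, and $z_n=x'_n/\e_n$. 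Therefore the function $\phi_0(\e_n\sqrt{|x-z_n|^2+y^2}/\beta)W(x-z_n,y)=\phi_0(\sqrt{|\e_n x-x'_n|^2+\e_n^2 y^2}/\beta)W(x-x'_n/\e_n,y)$ belongs to $E_{\e_n}$ by its very definition. Consequently $\dist_{\|\cdot\|_{\e_n}}(w_n,E_{\e_n})\ri 0$, i.e. for $n$ large $w_n\in E_{\e_n}^{d}$ (indeed $w_n\in E_{\e_n}^{\eta}$ for every $\eta>0$), contradicting $w_n\notin E_{\e_n}^{d}$.

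\medskip
It remains only to convert the contradiction into the quantitative statement, but that is the usual logical packaging: the negation of ``for all $d\in(0,d_0)$ there exist $\omega>0$ and $\e_d>0$ such that $\|J'_\e(w)\|_{(X_\e)^{-1}}\geq\omega$ on $J_\e^{d_\e}\cap(E_\e^{d_0}\setminus E_\e^d)$ for $\e\in(0,\e_d)$'' is exactly the existence of a single bad $d$ together with sequences $\e_n\ri0$, $w_n\in J_{\e_n}^{d_{\e_n}}\cap(E_{\e_n}^{d_0}\setminus E_{\e_n}^{d})$ and $\|J'_{\e_n}(w_n)\|_{(X_{\e_n})^{-1}}\ri0$, which is what I started from. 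Since $d_0$ is the constant furnished by Lemma \ref{Klem}, no new smallness condition on $d_0$ is needed here.

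\medskip
\textbf{Main obstacle.} There is essentially no analytic difficulty left: all the work has been done in Lemma \ref{Klem}. The only point that requires a little care is the bookkeeping needed to recognize that the approximant produced by Lemma \ref{Klem} genuinely lies in $E_{\e_n}$ (matching the cutoff $\phi_0(\sqrt{|\e x-x'|^2+\e^2 y^2}/\beta)$ with $\phi_0(\e\sqrt{|x-z_n|^2+y^2}/\beta)$ via $x'=\e z_n$, and checking $x'_n\in\M^\beta$ for large $n$ using $x_0\in\M$), and then translating ``$w_n$ is within $\e_n\ri0$ of $E_{\e_n}$'' into ``$w_n\in E_{\e_n}^d$ for $n$ large.'' This is the step I would write out explicitly; everything else is the boilerplate contradiction argument.
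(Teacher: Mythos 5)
Your proof is correct and follows essentially the same contradiction argument as the paper: negate the statement, extract sequences $\e_n\ri 0$, $w_n\in J_{\e_n}^{d_{\e_n}}\cap(E_{\e_n}^{d_0}\setminus E_{\e_n}^d)$ with $\|J'_{\e_n}(w_n)\|_{(X_{\e_n})^{-1}}\ri 0$, invoke Lemma \ref{Klem} (the hypothesis $\lim_n J_{\e_n}(w_n)\leq c_{V_0}$ being supplied by $w_n\in J_{\e_n}^{d_{\e_n}}$ and Lemma \ref{prop4.1G}), and conclude $w_n\in E_{\e_n}^{d}$ for large $n$, a contradiction. Your write-up is in fact slightly more careful than the paper's, spelling out the identification of the approximant as an element of $E_{\e_n}$ and the verification that all four hypotheses of Lemma \ref{Klem} are met.
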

\begin{proof}
Assume by contradiction that there exist $d\in (0, d_{0})$, $(\e_{n})$ and $(w_{n})$ such that
\begin{align*}
\e_{n}\in \left(0, \frac{1}{n}\right),\quad w_{n}\in J_{\e_{n}}^{d_{\e_{n}}}\cap (E_{\e_{n}}^{d_{0}}\setminus E^{d}_{\e_{n}}), \quad \|J'_{\e_{n}}(w_{n})\|_{(X_{\e_{n}})^{-1}}<\frac{1}{n}.
\end{align*}
By Lemma \ref{Klem}, we can find $(z_{n})\subset \R^{N}$, $x_{0}\in \M$ and $W\in \mathcal{S}_{V_{0}}$ such that
\begin{align*}
\lim_{n\ri \infty}|\e_{n} z_{n}- x_{0}|=0 \mbox{ and } \lim_{n\ri \infty} \|w_{n} -\phi_{0}(\e_{n}\sqrt{|x-z_{n}|^{2}+y^{2}}/\beta) W(x-z_{n}, y)\|_{\e_{n}}=0,
\end{align*}
which imply that
$w_{n}\in E_{\e_{n}}^{d}$ for $n$ sufficiently large. This is impossible because $w_{n}\in E_{\e_{n}}^{d_{0}}\setminus E^{d}_{\e_{n}}$.
\end{proof}
\begin{lem}\label{lem5.1G}
Given $\lambda>0$ there exist $\e_{0}>0$ and $d_{0}>0$ small enough such that
\begin{align*}
J_{\e}(w)>c_{V_{0}}-\lambda \quad \mbox{ for all } w\in E_{\e}^{d_{0}} \quad \mbox{ and } \e\in (0, \e_{0}). 
\end{align*}
\end{lem}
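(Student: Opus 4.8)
The plan is to argue by contradiction. Suppose the conclusion fails; then there are $\lambda>0$ and sequences $\e_n\ri 0$, $w_n\in E_{\e_n}^{1/n}$ with $J_{\e_n}(w_n)\leq c_{V_0}-\lambda$ for all $n$. By the definition of $E_{\e_n}^{1/n}$ I would choose $x_n'\in\M^{\beta}$ and $W_n\in\mathcal{S}_{V_0}$ such that, setting
$$
v_n(x,y):=\phi_{0}\!\left(\tfrac{1}{\beta}\sqrt{|\e_n x-x_n'|^{2}+\e_n^{2}y^{2}}\right)W_n\!\left(x-\tfrac{x_n'}{\e_n},y\right),
$$
one has $\|w_n-v_n\|_{\e_n}\leq 2/n\ri 0$. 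Since $\M^{\beta}$ is compact and $\mathcal{S}_{V_0}$ is compact in $\X$ by Proposition \ref{COMPs}, after passing to a subsequence I may assume $x_n'\ri x_0\in\M^{\beta}\subset\Lambda$ and $W_n\ri W$ in $\X$ with $W\in\mathcal{S}_{V_0}$. Then, repeating the cut-off estimates from the proof of Lemma \ref{lem4.1G} (now with $t=1$ and $W^{*}$ replaced by $W_n$, and using $\iint_{\R^{N+1}_{+}}y^{1-2s}|W_n-W|^{2\gamma}\,dxdy\ri 0$ from Lemma \ref{lem2.1}$(i)$ to absorb the gradient of the cut-off on the annulus), I get $\|v_n\|^{2}_{\x}\ri\|W\|^{2}_{\x}$; in particular $(\|v_n\|_{\e_n})$, hence $(\|w_n\|_{\e_n})$, stays bounded.

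Next I would compute $\lim_n J_{\e_n}(v_n)$. Since $\beta<\tfrac{1}{10}\dist\{\M,\R^{N}\setminus\Lambda\}$ and $x_n'\in\M^{\beta}$, the set where $v_n(\cdot,0)\neq 0$ lies in $\{x:\e_n x\in\M^{3\beta}\}\subset\Lambda_{\e_n}$, so $\chi_{\e_n}\equiv 0$ and $g_{\e_n}(x,\cdot)=f(\cdot)$ there; hence $Q_{\e_n}(v_n)=0$ and $J_{\e_n}(v_n)=\tfrac12\wh{M}(\|v_n\|^{2}_{\x})+\tfrac12\int_{\R^{N}}V_{\e_n}v_n^{2}(x,0)\,dx-\int_{\R^{N}}F(v_n(x,0))\,dx$. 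The first term converges to $\tfrac12\wh{M}(\|W\|^{2}_{\x})$ by continuity of $\wh{M}$. In the translated variables the potential term equals $\int_{\R^{N}}V(\e_n x+x_n')\phi_0(\e_n|x|/\beta)^{2}W_n^{2}(x,0)\,dx$; since the weight tends to $V(x_0)$ pointwise and is bounded by $\max_{\overline{\M^{3\beta}}}V$, while $W_n^{2}(\cdot,0)\ri W^{2}(\cdot,0)$ in $L^{1}(\R^{N})$ by compactness in $\X$, a generalized dominated convergence argument gives $\int_{\R^{N}}V_{\e_n}v_n^{2}(x,0)\,dx\ri V(x_0)|W(\cdot,0)|_{2}^{2}$. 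Finally, using $(f_1)$--$(f_2)$ and $W_n(\cdot,0)\ri W(\cdot,0)$ in $L^{q}(\R^{N})$ for $q\in[2,\2]$, one gets $\int_{\R^{N}}F(v_n(x,0))\,dx\ri\int_{\R^{N}}F(W(x,0))\,dx$ exactly as in the last estimate of the proof of Lemma \ref{lem4.1G}. Thus $J_{\e_n}(v_n)\ri L_{V(x_0)}(W)$, and since $x_0\in\Lambda$ gives $V(x_0)\geq V_0$, I obtain $L_{V(x_0)}(W)\geq L_{V_0}(W)=b_{V_0}=c_{V_0}$, the last equality by Lemma \ref{Final}.

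Finally I would transfer this bound to $w_n$. Because $Q_{\e_n}\geq 0$ it is enough to bound $P_{\e_n}(w_n)$ from below, and I would show $P_{\e_n}(w_n)=P_{\e_n}(v_n)+o(1)=J_{\e_n}(v_n)+o(1)$ by estimating the three terms of $P_{\e_n}$ separately: the $\wh{M}$-term via $(M4)$ and $\big|\|w_n\|^{2}_{\x}-\|v_n\|^{2}_{\x}\big|\leq C\|w_n-v_n\|_{\e_n}$; the potential term via Cauchy--Schwarz, which bounds it by $\|w_n-v_n\|_{\e_n}\|w_n+v_n\|_{\e_n}$; and the $G_{\e_n}$-term via the growth of $g$ (the same as $f$ by $(f_1)$--$(f_2)$) together with the trace embeddings ${\rm Tr}(X_{\e})\hookrightarrow L^{q}(\R^{N})$, $q\in[2,\2]$, and Lemma \ref{Sobolev}, giving a bound by $C\big(|w_n(\cdot,0)-v_n(\cdot,0)|_{2}+|w_n(\cdot,0)-v_n(\cdot,0)|_{\2}\big)\ri 0$. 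Hence $J_{\e_n}(w_n)\geq P_{\e_n}(w_n)\ri L_{V(x_0)}(W)\geq c_{V_0}$, which contradicts $J_{\e_n}(w_n)\leq c_{V_0}-\lambda$ and proves the lemma. I expect the main obstacle to be the uniform-in-$W_n$ control of the cut-off corrections to $\|v_n\|_{\x}$ and to the $F$-integral; this is circumvented by first replacing $W_n$ by its $\X$-limit $W$ using the compactness of $\mathcal{S}_{V_0}$ (Proposition \ref{COMPs}) and then quoting the decay/cut-off estimates already carried out in Lemma \ref{lem4.1G}.
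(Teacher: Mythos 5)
Your proof is correct, though it takes a visibly different route from the paper. The paper argues directly: for $w\in E_\e$ it uses $V_\e\ge V_0$ on $\Lambda_\e$ together with $G(x,t)\le F(t)$ to get the lower bound
\begin{align*}
J_\e(w)-c_{V_0}\ge \tfrac12\bigl[\wh{M}(\|w\|^2_\x)-\wh{M}(\|W\|^2_\x)\bigr]+\tfrac{V_0}{2}\int_{\R^N}(\phi_0^2-1)W^2(\cdot,0)\,dx-\int_{\R^N}\bigl[F(\phi_0 W(\cdot,0))-F(W(\cdot,0))\bigr]\,dx,
\end{align*}
which, arguing as in Lemma \ref{lem4.1G}, is shown to be $>-\lambda/2$ for $\e$ small; it then perturbs from $E_\e$ to $E_\e^{d_0}$ with a second $\lambda/2$ margin controlled by $d_0$. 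You instead argue by contradiction, extract convergent subsequences $W_n\ri W$ in $\X$ (via Proposition \ref{COMPs}) and $x_n'\ri x_0\in\M^\beta\subset\Lambda$, pass to the limit $J_{\e_n}(v_n)\ri L_{V(x_0)}(W)$, and conclude via $L_{V(x_0)}(W)\ge L_{V_0}(W)=b_{V_0}=c_{V_0}$ and $J_{\e_n}\ge P_{\e_n}$. The estimates that make both routes work are essentially the same (cut-off corrections absorbed via Lemma \ref{lem2.1}$(i)$; $\wh{M}$-, potential- and $G$-differences controlled by $\|\cdot\|_\e$-distance and trace Sobolev embeddings), but your compactness-and-limit argument makes explicit the uniformity over $W\in\mathcal{S}_{V_0}$ that the paper leaves implicit in the phrase ``arguing as in the proof of Lemma \ref{lem4.1G}''. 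The trade-off is that the paper's direct version yields the two margins ($-\lambda/2$ from $E_\e$ and another $\lambda/2$ from the $d_0$-perturbation) constructively, whereas yours produces the existence of $\e_0,d_0$ nonconstructively; both are perfectly adequate here.
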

\begin{proof}
If $w\in E_{\e}$ then there exist $W\in \mathcal{S}_{V_{0}}$ and $x'\in \M^{\beta}$ such that 
$$
w(x, y)=\phi_{0}(\sqrt{|\e x-x'|^{2}+\e^{2}y^{2}}/\beta) W(x-(x'/\e), y).
$$ 
Using $L_{V_{0}}(W)=c_{V_{0}}$, $(V_2)$ and $G(x, t)\leq F(t)$ we get
\begin{align*}
J_{\e}(w)-c_{V_{0}}&\geq \frac{1}{2}\left[\wh{M}(\|w\|^{2}_{\x})-\wh{M}(\|W\|^{2}_{\x})\right]+\frac{V_{0}}{2}\int_{\R^{N}} (\phi^{2}_{0}(\e|x|/\beta)-1)W^{2}(x,0)\, dx \\
&\quad-\int_{\R^{N}} F(\phi^{2}_{0}(\e|x|/\beta)W(x,0))-F(W(x,0))\, dx
\end{align*} 
independently of $x'\in \M^{\beta}$. Arguing as in the proof of Lemma \ref{lem4.1G}, we can see that there exists $\e_{0}>0$ such that
$$
J_{\e}(w)-c_{V_{0}}>-\frac{\lambda}{2} \quad \mbox{ for all } w\in E_{\e} \quad \mbox{ and } \e\in (0, \e_{0}).
$$
Now, if $v\in E^{d}_{\e}$, then there exists $w\in E_{\e}$ such that $\|w-v\|_{\e}\leq d$. Hence, $v=w+z$ with $\|z\|_{\e}\leq d$. Observing that $Q_{\e}(w)=0$, we have
\begin{align*}
J_{\e}(v)-J_{\e}(w)&\geq \frac{1}{2} [\wh{M}(\|w+z\|^{2}_{\x})-\wh{M}(\|w\|^{2}_{\x})] +\frac{1}{2} \int_{\R^{N}} V_{\e}(x) [(w(x,0)+z(x,0))^{2}-w^{2}(x,0)]\, dx \\
&\quad-\int_{\R^{N}} G_{\e}(x, w(x,0)+z(x,0))-G_{\e}(x, w(x,0))\, dx.
\end{align*}
Since $E_{\e}$ is uniformly bounded for $\e\in (0, \e_{0})$ (see the estimates in the proof of Lemma \ref{lem4.1G}), we obtain that for $\e\in (0, \e_{0})$
$$
|\|w+z\|^{2}_{\e}-\|w\|^{2}_{\e}|\leq \|z\|^{2}_{\e}+2\|w\|_{\e}\|z\|_{\e}\leq d^{2}+Cd\ri 0 \mbox{ as } d\ri 0.
$$
Moreover, noting that $\wh{M}(t_{2})-\wh{M}(t_{1})=\int_{t_{1}}^{t_{2}} M(\tau)\,d\tau$ and $(M5)$ yield
$$
|\wh{M}(\|w+z\|^{2}_{\x})-\wh{M}(\|w\|^{2}_{\x})|\leq M(C)  |\|w+z\|^{2}_{\x}-\|z\|^{2}_{\x}|\ri 0 \mbox{ as } d\ri 0,
$$
we can find $d_{0}>0$ small enough such that
$$
J_{\e}(v)>J_{\e}(w)-\frac{\lambda}{2}>c_{V_{0}}-\lambda \quad \forall v\in E_{\e}^{d_{0}} \quad \forall \e\in (0, \e_{0}).
$$
This ends the proof of lemma.
\end{proof}
By Corollary \ref{cor5.1G} and Lemma \ref{lem5.1G}, we fix $d_{1}\in (0, \frac{d_{0}}{3})$ and corresponding $\omega>0$ and $\e_{0}>0$ such that, for any $\e\in (0, \e_{0})$,
\begin{align*}
&\|J'_{\e}(w)\|_{(X_{\e})^{-1}}\geq \omega \quad \mbox{ for all } w\in J_{\e}^{d_{\e}}\cap (E_{\e}^{d_{0}}\setminus \E^{d_{1}}_{\e})\\
&J_{\e}(w)>\frac{c_{V_{0}}}{2} \quad \mbox{ for all } w\in E_{\e}^{d_{0}}.
\end{align*}
\begin{lem}\label{lem5.2G}
There exists $\alpha>0$ such that 
\begin{align*}
|t-1/t_{0}|\leq \alpha \mbox{ implies that } \gamma_{\e}(t)\in E_{\e}^{d_{1}} \,\mbox{ for all } \e\in (0, \e_{0}),
\end{align*}
where $\gamma_{\e}$ is given by \eqref{4.6G} and $t_{0}$ was chosen in \eqref{4.2G}.
\end{lem}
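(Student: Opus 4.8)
The plan is to reduce the claim to a single continuity estimate for the map $s\mapsto W_{\e,s}$ at $s=1$, uniform in $\e$, together with the observation that $\gamma_{\e}(1/t_{0})$ already belongs to $E_{\e}$. For the latter, take $x'=0\in\M\subset\M^{\beta}$ and $W=W^{*}\in\mathcal{S}_{V_{0}}$ in the definition of $E_{\e}$: then $\phi_{0}(\sqrt{|\e x|^{2}+\e^{2}y^{2}}/\beta)\,W^{*}(x,y)=\phi_{0}(\tfrac{\e}{\beta}\sqrt{|x|^{2}+y^{2}})W^{*}(x,y)=W_{\e,1}(x,y)$, so $W_{\e,1}\in E_{\e}$. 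Moreover $t_{0}>1$: by the monotonicity computation in the proof of Lemma \ref{Final}, the function $t\mapsto L_{V_{0}}(W^{*}_{t})$ is nonnegative on $[0,1]$ with maximum $L_{V_{0}}(W^{*})=c_{V_{0}}>0$ attained at $t=1$, while $L_{V_{0}}(W^{*}_{t_{0}})<-2$ by \eqref{4.2G}; hence $1/t_{0}\in(0,1)$ and $\gamma_{\e}(1/t_{0})=W_{\e,(1/t_{0})t_{0}}=W_{\e,1}$. Therefore it suffices to prove that $\sup_{\e>0}\|W_{\e,s}-W_{\e,1}\|_{\e}\to 0$ as $s\to 1$: for the fixed $d_{1}>0$ we then choose $\eta>0$ small enough (in particular $\eta<t_{0}-1$) with $\|W_{\e,s}-W_{\e,1}\|_{\e}\le d_{1}$ whenever $|s-1|\le\eta$ and $\e>0$, and set $\alpha:=\eta/t_{0}$, so that $|t-1/t_{0}|\le\alpha$ keeps $t\in(0,1]$, forces $|tt_{0}-1|\le\eta$, hence $\|\gamma_{\e}(t)-W_{\e,1}\|_{\e}\le d_{1}$ and $\gamma_{\e}(t)\in E_{\e}^{d_{1}}$ for all $\e\in(0,\e_{0})$.

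To prove the uniform estimate I would write $W_{\e,s}-W_{\e,1}=\psi_{\e}\,(W^{*}_{s}-W^{*}_{1})$, where $\psi_{\e}(x,y):=\phi_{0}(\tfrac{\e}{\beta}\sqrt{|x|^{2}+y^{2}})$ and $W^{*}_{s}(x,y):=W^{*}(x/s,y/s)$, and split $\|W_{\e,s}-W_{\e,1}\|_{\e}^{2}$ into its $\x$-part and its weighted-trace part. Since $\psi_{\e}(\cdot,0)$ is supported in $\Gamma^{0}_{2\beta/\e}(0)$, where $V_{\e}(x)=V(\e x)\le\bar V:=\max_{\overline{\Gamma^{0}_{2\beta}(0)}}V<\infty$, the trace part is bounded by $\bar V\int_{\R^{N}}|W^{*}(x/s,0)-W^{*}(x,0)|^{2}\,dx$, which tends to $0$ as $s\to1$ by strong continuity of dilations in $L^{2}(\R^{N})$ (recall $W^{*}\in\X$, so $W^{*}(\cdot,0)\in L^{2}(\R^{N})$). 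For the $\x$-part, $\nabla[\psi_{\e}(W^{*}_{s}-W^{*}_{1})]=(\nabla\psi_{\e})(W^{*}_{s}-W^{*}_{1})+\psi_{\e}\nabla(W^{*}_{s}-W^{*}_{1})$, so it is controlled by $2(I)+2(II)$ with $(I):=\iint_{\R^{N+1}_{+}}y^{1-2s}|\nabla\psi_{\e}|^{2}|W^{*}_{s}-W^{*}_{1}|^{2}$ and $(II):=\iint_{\R^{N+1}_{+}}y^{1-2s}\psi_{\e}^{2}|\nabla W^{*}_{s}-\nabla W^{*}_{1}|^{2}\le\|W^{*}_{s}-W^{*}_{1}\|_{\x}^{2}$. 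Because $\nabla W^{*}_{s}=s^{-1}(\nabla W^{*})(\cdot/s,\cdot/s)$ and $\nabla W^{*}\in L^{2}(\R^{N+1}_{+},y^{1-2s})$, strong continuity of dilations in that weighted space gives $(II)\to0$ as $s\to1$. For $(I)$, $|\nabla\psi_{\e}|\le C\e/\beta$ is supported in $B^{+}_{2\beta/\e}(0,0)\setminus B^{+}_{\beta/\e}(0,0)$, and arguing exactly as in the chain \eqref{3.1} of Lemma \ref{lem4.1G} — Hölder's inequality with exponent $\gamma=1+\tfrac{2}{N-2s}$ together with $\iint_{B^{+}_{2\beta/\e}(0,0)\setminus B^{+}_{\beta/\e}(0,0)}y^{1-2s}\,dx\,dy\le C(\beta/\e)^{N+2-2s}$ — the powers of $\e$ cancel and we obtain $(I)\le C\big(\iint_{\R^{N+1}_{+}}y^{1-2s}|W^{*}_{s}-W^{*}_{1}|^{2\gamma}\,dx\,dy\big)^{1/\gamma}$; since $W^{*}\in L^{2\gamma}(\R^{N+1}_{+},y^{1-2s})$ by Lemma \ref{lem2.1}-$(i)$, strong continuity of dilations in $L^{2\gamma}(\R^{N+1}_{+},y^{1-2s})$ yields $(I)\to0$ as $s\to1$. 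All these bounds are independent of $\e$, which is exactly the needed uniformity; the dilation-continuity facts follow, as usual, from the scaling identities $\|g(\cdot/s,\cdot/s)\|_{L^{q}(y^{1-2s})}=s^{(N+2-2s)/q}\|g\|_{L^{q}(y^{1-2s})}$ and density of compactly supported functions.

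The main obstacle is the term $(I)$. Naively it carries a factor $\e^{2}$ from $|\nabla\psi_{\e}|^{2}$ but is integrated over an annulus of radius $\sim\beta/\e$, so one must be careful that the two effects balance; the clean device is precisely the weighted-Sobolev/Hölder estimate \eqref{3.1} used in Lemma \ref{lem4.1G}, after which the surviving factor no longer sees $\e$ and can be majorized by the full-space weighted $L^{2\gamma}$-norm of $W^{*}_{s}-W^{*}_{1}$, whose smallness for $s$ near $1$ is the strong continuity of the dilation group on $L^{2\gamma}(\R^{N+1}_{+},y^{1-2s})$. Everything else — the reduction to $W_{\e,1}\in E_{\e}$ and the trace and $(II)$ terms — is routine.
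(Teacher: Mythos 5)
Your proof is correct and follows essentially the same route as the paper's: both reduce the claim to showing that $\bigl\|\phi_{0}\bigl(\tfrac{\e}{\beta}\sqrt{|x|^{2}+y^{2}}\bigr)\,(W^{*}_{tt_{0}}-W^{*})\bigr\|_{\e}$ is small, uniformly in $\e$, when $t$ is close to $1/t_{0}$, using the cutoff-gradient estimate from Lemma~\ref{lem4.1G} together with continuity of the dilation $t\mapsto W^{*}_{t}$, after noting $W_{\e,1}\in E_{\e}$. The paper packages this as a uniform operator bound $\|\phi_{0}(\tfrac{\e}{\beta}\sqrt{|x|^{2}+y^{2}})\,v\|_{\e}\le C_{0}\|v\|_{\X}$ which it states without proof; your split into the trace term, $(I)$ and $(II)$ is precisely the computation that justifies that bound, so the two arguments coincide in content.
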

\begin{proof}
Firstly, we note that there exists $C_{0}>0$ such that
\begin{align*}
\left\| \phi_{0}\left(\frac{\e}{\beta}\sqrt{|x|^{2}+y^{2}}\right) v \right\|_{\e}\leq C_{0}\|v\|_{\X} \quad \forall \e\in (0, \e_{0}) \quad \forall v\in \X.
\end{align*}
Since the map $\psi: [0, t_{0}]\rightarrow \X$ defined as $\psi(t):=W^{*}_{t}$ is continuous, we can find $\sigma>0$ such that $\|W^{*}_{t}-W^{*}\|_{\X}<\frac{d_{1}}{C_{0}}$ whenever $|t-1|\leq \sigma$. Hence, if $|tt_{0}-1|\leq \sigma$, then $|t-\frac{1}{t_{0}}|\leq \frac{\sigma}{t_{0}}=:\alpha$ and this yields
\begin{align*}
\|\gamma_{\e}(t)-W_{\e,1}\|_{\e}=\left\|\phi_{0}\left(\frac{\e}{\beta}\sqrt{|x|^{2}+y^{2}}\right) (W^{*}_{t t_{0}}-W^{*})\right\|_{\e}\leq C_{0} \|  W^{*}_{tt_{0}}-W^{*}\|_{\X}<d_{1}.
\end{align*}
Since $W_{\e,1}\in E_{\e}$ (recall that $0\in \M$ and $W^{*}\in \mathcal{S}_{V_{0}}$), we deduce that $\gamma_{\e}(t)\in E_{\e}^{d_{1}}$.
\end{proof}
\begin{lem}\label{lem5.3G}
For $\alpha$ given in Lemma \ref{lem5.2G} there exist $\rho>0$ and $\e_{0}>0$ such that
\begin{align*}
J_{\e}(\gamma_{\e}(t))<c_{V_{0}}-\rho, \quad \mbox{ for any } \e\in (0, \e_{0}) \quad \mbox{ and } |t-1/t_{0}|\geq \alpha.
\end{align*}
\end{lem}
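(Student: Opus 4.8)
The plan is to reduce the whole statement to the behaviour of the single‑variable function $h(\tau):=L_{V_{0}}(W^{*}_{\tau})$ on $[0,t_{0}]$, exploiting the uniform approximation of Lemma \ref{lem4.1G}. Recall that $\gamma_{\e}(t)=W_{\e,tt_{0}}$ for $t\in(0,1]$ and $\gamma_{\e}(0)=0$. Writing $\tau:=tt_{0}\in[0,t_{0}]$ and $\sigma:=\alpha t_{0}$ (the constant appearing in the proof of Lemma \ref{lem5.2G}), the condition $|t-1/t_{0}|\ge\alpha$ is exactly $|\tau-1|\ge\sigma$, so the set of admissible $\tau$ is the compact set $K:=\{\tau\in[0,t_{0}]:|\tau-1|\ge\sigma\}$, which stays a definite distance away from $\tau=1$.

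First I would record, exactly as in the proof of Lemma \ref{Final}, that since $W^{*}\in\mathcal{S}_{V_{0}}$ is a solution of \eqref{LP} and hence satisfies the Pohozaev identity of Lemma \ref{lem2.11FIJ},
$$
h(\tau)=\frac{1}{2}\wh{M}\big(\tau^{N-2s}\|W^{*}\|^{2}_{\x}\big)-\tau^{N}\Big(\frac{N-2s}{2N}\Big)M\big(\|W^{*}\|^{2}_{\x}\big)\|W^{*}\|^{2}_{\x},
$$
with $h(0)=0$. Differentiating in $\tau$ and invoking $(M5)$ (again as in Lemma \ref{Final}) shows that $h$ is strictly increasing on $(0,1)$ and strictly decreasing on $(1,\infty)$, while $h(1)=L_{V_{0}}(W^{*})=b_{V_{0}}=c_{V_{0}}$. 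Thus $\tau=1$ is the \emph{unique} maximiser of $h$ on $[0,t_{0}]$; by compactness of $K$ and $1\notin K$ there is $\rho>0$, which we may take with $\rho\le c_{V_{0}}/2$, such that
$$
\max_{\tau\in K}h(\tau)\le c_{V_{0}}-2\rho .
$$

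Then I would apply Lemma \ref{lem4.1G} to choose $\e_{0}>0$ so that $\sup_{\tau\in[0,t_{0}]}|J_{\e}(W_{\e,\tau})-h(\tau)|<\rho$ for all $\e\in(0,\e_{0})$. Combining the two displays, for every $\e\in(0,\e_{0})$ and every $t\in[0,1]$ with $|t-1/t_{0}|\ge\alpha$, i.e. $\tau=tt_{0}\in K$, we obtain
$$
J_{\e}(\gamma_{\e}(t))=J_{\e}(W_{\e,\tau})<h(\tau)+\rho\le c_{V_{0}}-2\rho+\rho=c_{V_{0}}-\rho ,
$$
which is the assertion (the endpoint $t=0$, if it lies in this range, is covered by the same estimate since $h(0)=0$ and $J_{\e}(\gamma_{\e}(0))=J_{\e}(0)=0\le c_{V_{0}}-2\rho$). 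The lemma is in essence a repackaging of Lemma \ref{lem4.1G} together with the analysis of Lemma \ref{Final}; the only delicate point is the \emph{strictness} of the maximum of $h$ at $\tau=1$, and this is precisely what the $(M5)$‑driven monotonicity of $\tau\mapsto M(\tau^{N-2s}\|W^{*}\|^{2}_{\x})/\tau^{2s}$ established in Lemma \ref{Final} guarantees, so I do not expect any genuine obstacle beyond careful bookkeeping of the substitution $\tau=tt_{0}$ and the matching of $\alpha$ with $\sigma$ from Lemma \ref{lem5.2G}.
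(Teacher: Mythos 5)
Your proof is essentially identical to the paper's: both use the Pohozaev identity together with $(M5)$ (via the analysis in Lemma \ref{Final}) to identify $\tau=1$ as the maximizer of $\tau\mapsto L_{V_{0}}(W^{*}_{\tau})$ on $[0,t_{0}]$, extract $\rho>0$ so that $L_{V_{0}}(W^{*}_{\tau})<c_{V_{0}}-2\rho$ for $|\tau-1|\geq t_{0}\alpha$, and then invoke the uniform approximation of Lemma \ref{lem4.1G} to transfer the estimate to $J_{\e}(W_{\e,\tau})$. The change of variables $\tau=tt_{0}$ you spell out is carried implicitly in the paper's inequality $|t-1|\geq t_{0}\alpha$, so there is no genuine difference in approach.
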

\begin{proof}
By $(M5)$ and \eqref{4.2G}, we know that $t=1$ is a maximum point of $L_{V_{0}}(W^{*}_{t})$ in $[0, t_{0}]$ (see the proof of Lemma \ref{Final}). 
Then, we find $\rho>0$ such that
$$
L_{V_{0}}(W^{*}_{t})<c_{V_{0}}-2\rho \mbox{ for } |t-1|\geq t_{0}\alpha.
$$
On the other hand, by Lemma \ref{lem4.1G}, there exists $\e_{0}>0$ such that
$$
\sup_{t\in [0, t_{0}]} |J_{\e}(W_{\e,t})-L_{V_{0}}(W^{*}_{t})|<\rho \mbox{ for } \e\in (0, \e_{0}).
$$
Consequently, for $|t-1|\geq t_{0}\alpha$ and $\e\in (0, \e_{0})$, we have
\begin{align*}
J_{\e}(W_{\e, t})\leq L_{V_{0}}(W^{*}_{t})+|J_{\e}(W_{\e,t})-L_{V_{0}}(W^{*}_{t})|<c_{V_{0}}-2\rho+\rho=c_{V_{0}}-\rho.
\end{align*}
\end{proof}
In the light of Lemma \ref{lem5.2G} and Lemma \ref{lem5.3G}, we can argue as in the proof of Proposition 5.2 in \cite{Gloss} (see also \cite{BJ, FIJ, HL}), to obtain the following result that we state without giving the details.
\begin{lem}\label{lem3.3He}
There exists $\bar{\e}>0$ such that for all $\e\in (0, \bar{\e}]$ there exists a sequence $(w_{n, \e})\subset J_{\e}^{{d}_{\e}+\e}\cap E_{\e}^{d_{0}}$ such that $J'_{\e}(w_{n, \e})\ri 0$ in $(X_{\e})^{-1}$ as $n\ri \infty$.
\end{lem}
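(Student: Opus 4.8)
The plan is to argue by contradiction with the minimax characterization of $c_\e$, by means of a quantitative deformation; this is exactly the scheme of Proposition~5.2 in \cite{Gloss} (see also \cite{BJ, FIJ, HL}). Suppose the conclusion fails; its negation is that for every $\bar\e>0$ there is an $\e\in(0,\bar\e]$ — hence an $\e$ as small as we please — for which no such Palais--Smale sequence exists, i.e.
$$
a_\e:=\inf\Bigl\{\|J'_\e(w)\|_{(X_\e)^{-1}}:w\in J_\e^{d_\e+\e}\cap E_\e^{d_0}\Bigr\}>0 .
$$
Before exploiting this I would freeze all the constants: apply Lemma~\ref{lem5.1G} (with $\lambda=c_{V_0}/4$, say), Corollary~\ref{cor5.1G} and Lemma~\ref{lem5.3G} to obtain $d_1\in(0,d_0/3)$, $\omega>0$, $\alpha>0$ and a gap $\rho>0$; since shrinking $\rho$ only weakens Lemma~\ref{lem5.3G}, I may assume $\rho<\tfrac{1}{16}\omega(d_0-d_1)$, $\rho<c_{V_0}/4$, and that Lemma~\ref{lem5.3G} holds with $c_{V_0}-2\rho$ in place of $c_{V_0}-\rho$; finally, by Lemma~\ref{prop4.1G} I take $\e$ small enough that also $c_\e>c_{V_0}-\rho$ and $d_\e<c_{V_0}+\tfrac{1}{16}\omega(d_0-d_1)$. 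The contradiction hypothesis together with Corollary~\ref{cor5.1G} then gives the uniform bound $\|J'_\e(w)\|_{(X_\e)^{-1}}\ge m_\e:=\min\{a_\e,\omega\}>0$ for every $w\in E_\e^{d_0}$ with $c_{V_0}-2\rho<J_\e(w)\le d_\e$.

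Next I would build the deformation flow. Pick a locally Lipschitz pseudo-gradient vector field $P$ for $J_\e$ on $\{J'_\e\ne0\}$ and normalize $V:=P/\|P\|_\e$, so that $\|V\|_\e=1$ and $\langle J'_\e(w),V(w)\rangle\ge\tfrac12\|J'_\e(w)\|_{(X_\e)^{-1}}$ on the set above; take locally Lipschitz cut-offs $\psi_1$ (equal to $1$ on $\{J_\e\ge c_{V_0}-\rho\}$ and to $0$ on $\{J_\e\le c_{V_0}-2\rho\}$) and $\psi_2$ (equal to $1$ on $E_\e^{(d_0+d_1)/2}$ and to $0$ off $E_\e^{d_0}$); and let $\eta(\tau,\cdot)$ be the flow of $\partial_\tau\eta=-\psi_1(\eta)\psi_2(\eta)\,V(\eta)$, $\eta(0,\cdot)=\mathrm{id}$. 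The vector field is locally Lipschitz, bounded in norm by $1$, and supported where $c_{V_0}-2\rho<J_\e\le d_\e$ — a region on which $J'_\e\ne0$ by the bound just recorded — so the flow is globally defined and continuous, $\tau\mapsto J_\e(\eta(\tau,w))$ is nonincreasing, $\eta$ fixes every $w$ with $J_\e(w)\le c_{V_0}-2\rho$ or $w\notin E_\e^{d_0}$, and $\|\eta(\tau,w)-w\|_\e\le\tau$.

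The heart of the matter — and the point where the choice $d_1\ll d_0$, Corollary~\ref{cor5.1G} and the smallness of $\rho$ all enter — is a ``no escape'' estimate: for $w\in E_\e^{d_1}$ with $J_\e(w)\le d_\e$, the trajectory $\eta(\cdot,w)$ cannot leave $E_\e^{(d_0+d_1)/2}$ while $J_\e$ stays above $c_{V_0}-\rho$. Indeed, to do so it would have to cross the annulus $E_\e^{(d_0+d_1)/2}\setminus E_\e^{d_1}$, of $\|\cdot\|_\e$-width at least $(d_0-d_1)/2$; on that annulus, as long as $J_\e\ge c_{V_0}-\rho$, one has $\psi_1=\psi_2=1$ and, by Corollary~\ref{cor5.1G}, $\|J'_\e\|_{(X_\e)^{-1}}\ge\omega$, so the trajectory advances at unit speed while $J_\e$ decreases at rate $\ge\omega/2$; a full crossing would thus drop $J_\e$ by at least $\tfrac14\omega(d_0-d_1)$, which — given $d_\e<c_{V_0}+\tfrac1{16}\omega(d_0-d_1)$ and $\rho<\tfrac1{16}\omega(d_0-d_1)$ — is incompatible with $J_\e$ staying $\ge c_{V_0}-\rho$ throughout. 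Hence $J_\e$ has already fallen to $\le c_{V_0}-\rho$ by the time the trajectory (if ever) reaches $\partial E_\e^{(d_0+d_1)/2}$, and thereafter it keeps $J_\e\le c_{V_0}-\rho$, so $\psi_2$ can at worst pin it inside $E_\e^{d_0}$ at a level $\le c_{V_0}-\rho$. On the other hand, as long as the trajectory stays in $E_\e^{(d_0+d_1)/2}$ with $J_\e>c_{V_0}-\rho$ we have $\psi_1=\psi_2=1$ and $\partial_\tau J_\e(\eta)\le-m_\e/2$; hence, setting $\bar T_\e:=2(d_\e-c_{V_0}+2\rho)/m_\e<\infty$, in every case $J_\e\bigl(\eta(\bar T_\e,w)\bigr)\le c_{V_0}-\rho$.

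Finally I would deform the path $\gamma_\e$ from \eqref{4.6G}. For $|t-1/t_0|\ge\alpha$, Lemma~\ref{lem5.3G} gives $J_\e(\gamma_\e(t))<c_{V_0}-2\rho$, so $\eta(\bar T_\e,\cdot)$ leaves $\gamma_\e(t)$ fixed; for $|t-1/t_0|\le\alpha$, Lemma~\ref{lem5.2G} gives $\gamma_\e(t)\in E_\e^{d_1}$ with $J_\e(\gamma_\e(t))\le d_\e$, so the ``no escape'' estimate yields $J_\e(\eta(\bar T_\e,\gamma_\e(t)))\le c_{V_0}-\rho$. Since $J_\e(\gamma_\e(0))=0$ and $J_\e(\gamma_\e(1))=J_\e(W_{\e,t_0})<-2$ are both $<c_{V_0}-2\rho$, the endpoints are untouched, so $\tilde\gamma_\e:=\eta(\bar T_\e,\gamma_\e(\cdot))\in\Gamma_\e$ while $\max_{t\in[0,1]}J_\e(\tilde\gamma_\e(t))\le c_{V_0}-\rho<c_\e$, contradicting the definition of $c_\e$. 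This contradiction produces the desired $\bar\e$, and for each $\e\in(0,\bar\e]$ the sequence $(w_{n,\e})$ is obtained simply by picking $w_{n,\e}\in J_\e^{d_\e+\e}\cap E_\e^{d_0}$ with $\|J'_\e(w_{n,\e})\|_{(X_\e)^{-1}}<1/n$. I expect the genuinely delicate step to be precisely the ``no escape'' estimate: one must show that the penalization makes $J_\e$ so large near $\partial E_\e^{d_0}$, relative to the available energy budget, that the gradient flow cannot leak out of $E_\e^{d_0}$ while remaining at energy close to $c_{V_0}$.
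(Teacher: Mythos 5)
Your proposal is correct and follows exactly the deformation scheme the paper invokes (Proposition~5.2 of \cite{Gloss}, together with \cite{BJ, FIJ, HL}) without spelling it out: build the cut-off pseudo-gradient flow, prove the ``no escape'' bound using Corollary~\ref{cor5.1G} and the freezing of $\rho$, $d_1$, $\omega$, and then deform $\gamma_\e$ below level $c_\e$ to reach a contradiction with the minimax characterization. The only nit worth recording is that the vector field $-\psi_1\psi_2 V$ as written is supported on $\{J_\e>c_{V_0}-2\rho\}\cap E_\e^{d_0}$ rather than on $\{c_{V_0}-2\rho<J_\e\le d_\e\}\cap E_\e^{d_0}$, so one should either insert a third cut-off $\psi_3$ vanishing for $J_\e\ge d_\e+\e$ or note that the flow started from $\gamma_\e$ never leaves $\{J_\e\le d_\e\}$, but this is a standard and harmless adjustment that does not change the argument.
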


\noindent
Now we are ready to give the proof of the main result of this section.
\begin{proof}[Proof of Theorem \ref{thm1}]
By Lemma \ref{lem3.3He},  there exists $\bar{\e}>0$ such that for all $\e\in (0, \bar{\e}]$ there exists a sequence $(w_{n, \e})\subset J_{\e}^{{d}_{\e}+\e}\cap E_{\e}^{d_{0}}$ such that $J'_{\e}(w_{n, \e})\ri 0$ in $(X_{\e})^{-1}$ as $n\ri \infty$. Since $(w_{n,\e})$ is bounded in $X_{\e}$, up to a subsequence, as $n\ri \infty$, we have
\begin{equation}\label{3.30}
w_{n,\e}\rightharpoonup w_{\e} \mbox{ in } X_{\e},
\end{equation}
and
\begin{equation}\label{3.31}
\lambda_{n,\e}:=\left(\int_{\R^{N}} \chi_{\e}(x) w_{n, \e}^{2}(x,0)\, dx-1\right)_{+}\ri \lambda_{\e}.
\end{equation}
Then, it is easy to verify that
\begin{align}\label{3.32}
\left\{
\begin{array}{ll}
-\dive(y^{1-2s} \nabla w_{\e})=0 &\mbox{ in } \R^{N+1}_{+}, \\
\frac{1}{\alpha_{\e}} \frac{\partial w_{\e}}{\partial \nu^{1-2s}}=-V_{\e} w_{\e}(\cdot,0)-4\lambda_{\e} \chi_{\e} w_{\e}(\cdot,0)+g_{\e}(x, w_{\e}(\cdot,0)) &\mbox{ in } \R^{N},
\end{array}
\right.
\end{align}
where
$$
\alpha_{\e}:=\lim_{n\ri \infty} M(\|w_{n, \e}\|^{2}_{X^{s}(\R^{N+1}_{+})}).
$$
By $(M1)$, $(M4)$ and the boundedness of $(w_{n,\e})$ in $X_{\e}$ we know that 
\begin{equation}\label{BM}
m_{0}\leq \alpha_{\e}\leq C \quad \forall \e\in (0, \bar{\e}].
\end{equation}
Next, we show that $(w_{n, \e})$ is tight in $X^{s}(\R^{N+1}_{+})$ (see definition 3.2.1 in \cite{DMV}).
 To prove this, for all fixed $\e\in (0, \bar{\e}]$, take $R>0$ such that $\Lambda_{\e}\subset \Gamma^{0}_{R}(0)$, and set $\phi_{R}(x,y):=\bar{\phi}(\sqrt{|x|^{2}+y^{2}}/R)$ where $\bar{\phi}\in C^{\infty}(\R_{+})$ is such that $\bar{\phi}=0$ in $[0, 1]$, $\bar{\phi}=1$ in $[2, \infty)$, $0\leq \bar{\phi}\leq 1$ and $|\bar{\phi}'|_{\infty}\leq C$. Since $(\phi_{R}w_{n, \e})$ is bounded in $X_{\e}$ for each $\e\in (0, \bar{\e}]$, we deduce that $\langle J'_{\e}(w_{n, \e}), \phi_{R} w_{n, \e}\rangle\ri 0$ as $n\ri \infty$, and so, by the definition of $g_{\e}$, we get
\begin{align}\label{3.33}
\alpha_{\e} &\iint_{\R^{N+1}_{+}} y^{1-2s} |\nabla w_{n, \e}|^{2}\phi_{R}\, dxdy+\int_{\R^{N}} V_{\e}(x) w_{n, \e}^{2}(x,0)\phi_{R}(x,0)\, dx \nonumber\\
&\leq \frac{1}{2} \int_{\R^{N}} V_{\e}(x)  w_{n, \e}^{2}(x,0)\phi_{R}(x,0)\, dx-\alpha_{\e}\iint_{\R^{N+1}_{+}} y^{1-2s} w_{n, \e} \nabla w_{n, \e}\nabla \phi_{R}\, dxdy.
\end{align}
Arguing as in \eqref{3.1}, and using H\"older's inequality, \eqref{BM}, \eqref{3.30} and Lemma \ref{lem2.1}-$(ii)$, we get 
\begin{align}\label{3.34}
&\limsup_{n\ri \infty} \left| \alpha_{\e}\iint_{\R^{N+1}_{+}} y^{1-2s} w_{n, \e} \nabla w_{n, \e}\nabla \phi_{R}\, dxdy \right| \nonumber\\
&\leq \frac{C}{R} \limsup_{n\ri \infty}  \left[\left(\iint_{\R^{N+1}_{+}} y^{1-2s} |\nabla w_{n, \e}|^{2}\, dxdy  \right)^{\frac{1}{2}} \left(\iint_{B_{2R}^{+}(0,0)\setminus B_{R}^{+}(0, 0)} y^{1-2s} |w_{n, \e}|^{2}\, dxdy  \right)^{\frac{1}{2}}  \right] \nonumber \\
&\leq \frac{C}{R} \left(\iint_{B_{2R}^{+}(0,0)\setminus B_{R}^{+}(0, 0)} y^{1-2s} |w_{\e}|^{2}\, dxdy  \right)^{\frac{1}{2}}  \nonumber \\
&\leq C \left(\iint_{B_{2R}^{+}(0,0)\setminus B_{R}^{+}(0, 0)} y^{1-2s} |w_{\e}|^{2\gamma}\, dxdy  \right)^{\frac{1}{2\gamma}}\ri 0 \mbox{ as } R\ri \infty. 
\end{align}
Putting together \eqref{BM}, \eqref{3.33} and \eqref{3.34} we obtain
\begin{align}\label{tight}
\lim_{R\ri \infty} \limsup_{n\ri \infty} \iint_{\R^{N+1}_{+}\setminus B_{2R}^{+}(0, 0)} y^{1-2s} |\nabla w_{n, \e}|^{2}\, dxdy+\int_{\R^{N}\setminus \Gamma_{2R}^{0}(0)} V_{\e}(x) w_{n, \e}^{2}(x,0)\, dx=0,
\end{align}
which implies that $(w_{n, \e})$ is tight in $X_{\e}$. In particular, by \eqref{tight} and the compactness of $H^{s}(\R^{N})\subset L^{2}_{loc}(\R^{N})$, we deduce that $w_{n, \e}(\cdot, 0)\ri w_{\e}(\cdot, 0)$ in $L^{2}(\R^{N})$ as $n\ri \infty$. Hence, by interpolation, $w_{n, \e}(\cdot, 0)\ri w_{\e}(\cdot, 0)$ in $L^{q}(\R^{N})$ for all $q\in [2, \2)$. By the definition of $g_{\e}$, $(f_1)$-$(f_2)$, we have as $n\ri \infty$
\begin{align}\label{3.38}
\int_{\R^{N}} g_{\e}(x, w_{n, \e}(x,0))w_{n, \e}(x,0)\, dx\ri \int_{\R^{N}} g_{\e}(x, w_{\e}(x,0))w_{\e}(x,0)\, dx.
\end{align}
In the light of \eqref{3.30}, \eqref{3.32}, \eqref{3.38}, $\langle J'_{\e}(w_{n, \e}), w_{n, \e}\rangle\ri 0$ and arguing as at the end of the proof of Lemma \ref{Klem}, we deduce that 
\begin{align}\label{LAMBDA}
w_{n, \e}\ri w_{\e} \mbox{ in } X_{\e} \mbox{ as } n\ri \infty, \, \alpha_{\e}=M(\|w_{\e}\|^{2}_{X^{s}(\R^{N+1}_{+})}) \mbox{ and } \lambda_{\e}=\left(\int_{\R^{N}} \chi_{\e}(x) w_{\e}^{2}(x,0)\, dx-1\right)_{+}.
\end{align}
Since $\mathcal{S}_{V_{0}}$ is compact in $X^{1,s}(\R^{N+1}_{+})$, it is easy to check that $0\notin E^{d_{0}}_{\e}$ for $\e>0$, $d_{0}>0$ small. Hence, $w_{\e}\in E_{\e}^{d_{0}}\cap J^{d_{\e}+\e}$ is a nontrivial solution to \eqref{3.32}.

Now, for any sequence $(\e_{n})$ such that $\e_{n}\ri 0$ as $n\ri \infty$, by Lemma \ref{Klem} there exist, up to a subsequence, $(z_{n})\subset \R^{N}$, $x_{0}\in \M$ and $W\in \mathcal{S}_{V_{0}}$ such that
\begin{align}\label{3.39}
\lim_{n\ri \infty}|\e_{n}z_{n}-x_{0}|=0
\end{align}
and
\begin{align*}
\lim_{n\ri \infty}  \|w_{\e_{n}} -\phi_{0}(\e_{n}\sqrt{|x-z_{n}|^{2}+y^{2}}/\beta) W(x-z_{n}, y)\|_{\e_{n}}=0,
\end{align*}
which implies that
\begin{align}\label{FANTA}
\lim_{n\ri \infty}  \|\bar{w}_{\e_{n}} -W\|_{X^{1,s}(\R^{N+1}_{+})}=0,
\end{align}
where $\bar{w}_{\e_{n}}(x, y):=w_{\e_{n}}(x+z_{n}, y)$. 
In view of \eqref{3.32}, \eqref{BM}, \eqref{LAMBDA} and \eqref{FANTA}, we can use a Moser iteration scheme (see for instance \cite{Aampa, AI2, DMV}) and repeat the same arguments in \cite{AM, Aasy, AI2, HZm} to deduce that
\begin{align}\label{3.40}
\lim_{|x|\ri \infty} \bar{w}_{\e_{n}}(x,0)=0 \mbox{ uniformly for } \e_{n} \mbox{ small, }
\end{align}
which guarantees the existence of a constant $\rho>0$ such that $f(\tilde{w}_{\e_{n}}(x,0))\leq \frac{V_{0}}{2} \tilde{w}_{\e_{n}}(x,0)$ for all $|x|\geq \rho$ and $\e_{n}$ small. When $|x|\leq \rho$, it follows from \eqref{3.39} that $\Gamma^{0}_{\e_{n}\rho}(\e_{n}z_{n})\subset \Lambda$ for $\e_{n}$ small enough, and so
\begin{align}\label{3.41}
g_{\e_{n}}(x+z_{n}, \bar{w}_{\e_{n}}(x,0))=f(\bar{w}_{\e_{n}}(x,0)) \mbox{ for } \e_{n} \mbox{ small. }
\end{align}
From \eqref{3.40} and $(f_1)$, we can find $R>0$ big enough such that 
$$
f(\bar{w}_{\e_{n}}(x,0))\leq \frac{1}{2} V(\e_{n}x+\e_{n}z_{n}) \bar{w}_{\e_{n}}(x,0) \mbox{ for } x\in \R^{N}\setminus \Gamma^{0}_{R}(0).
$$
On the other hand, arguing as in \cite{AM, Armi, Aasy}, we see that 
$$
|\bar{w}_{\e_{n}}(x,0)|\leq \frac{C}{1+|x|^{N+2s}} \mbox{ for } \e_{n} \mbox{ small, }
$$
for some $C>0$ independent of $\e_{n}$. Then, noting that $\R^{N}\setminus (\Lambda_{\e_{n}}-z_{n})\subset \R^{N}\setminus \Gamma^{0}_{\frac{\beta}{\e_{n}}}(0)$, we obtain
\begin{align*}
\e_{n}^{-1} \int_{\R^{N}\setminus \Lambda_{\e_{n}}} w^{2}_{\e_{n}}(x,0)\, dx&=\e_{n}^{-1} \int_{\R^{N}\setminus (\Lambda_{\e_{n}}-z_{n})} \bar{w}^{2}_{\e_{n}}(x,0)\, dx \\
&\leq C\e_{n}^{-1} \int_{\R^{N}\setminus \Gamma^{0}_{\frac{\beta}{\e_{n}}}(0)} \frac{1}{(1+|x|^{N+2s})^{2}}\, dx\ri 0 \mbox{ as } n\ri \infty,
\end{align*}
which implies that $Q_{\e_{n}}(w_{\e_{n}})=0$ for $\e_{n}$ small enough. This together with \eqref{3.41} implies that $w_{\e_{n}}$ is a solution to \eqref{EP}. Hence, $u_{\e_{n}}(x):=w_{\e_{n}}(\frac{x}{\e_{n}}, 0)$ is a solution to \eqref{P}. Since $u_{\e}\in L^{\infty}(\R^{N})$, $u_{\e}\geq 0$ in $\R^{N}$, $V$ and $f$ are continuous functions, and using $(M1)$, from the Harnack inequality \cite{CSire, JLX} we have that $u_{\e}>0$ in $\R^{N}$.

Now, let $P_{n}$ be a global maximum point of $\bar{w}_{\e_{n}}(\cdot, 0)$. 
Since $\bar{w}_{\e_{n}}$ solves \eqref{MEP} with $V_{\e_{n}}$ replaced by $V_{\e_{n}}(\cdot+z_{n})$, it follows from $(V_1)$, $(f_1)$-$(f_2)$ that
$$
V_{1} |\bar{w}_{\e_{n}}(\cdot, 0)|_{2}^{2}\leq \frac{V_1}{2}  |\bar{w}_{\e_{n}}(\cdot, 0)|_{2}^{2}+C|\bar{w}_{\e_{n}}(\cdot, 0)|_{\infty}^{\2-2} |\bar{w}_{\e_{n}}(\cdot,0)|_{2}^{2}
$$
which implies that  $|\bar{w}_{\e_{n}}(\cdot, 0)|_{\infty}\geq \delta>0$ for all $n\in \mathbb{N}$.
Then, $\bar{w}_{\e_{n}}(P_{n}, 0)\geq \delta>0$ for all $n\in \mathbb{N}$, and $(P_{n})$ is bounded by \eqref{3.40}. Noting that $u_{\e_{n}}(x)=\bar{w}_{\e_{n}}(\frac{x}{\e_{n}}-z_{n}, 0)$, we deduce that $x_{n}:=\e_{n}P_{n}+\e_{n}z_{n}$ is a global maximum point of $u_{\e_{n}}$. From \eqref{3.39} we get $x_{n}\ri x_{0}\in \M$ as $n\ri \infty$. Finally, we can argue as in \cite{Armi, Aasy, HZm} to deduce the polynomial decay of $u_{\e}$.

\end{proof}

\section{Proof of Theorem \ref{thm2}}
This section is devoted to the proof of Theorem \ref{thm2}. We borrow some arguments used in \cite{ZCDO}.\\
In view of Proposition \ref{prop2.1ZCDO} there exists $\kappa>0$ such that
\begin{equation}\label{3.1ZCDO}
\sup_{u\in \mathcal{S}_{V_{0}}} |u(\cdot, 0)|_{\infty}=\sup_{u\in \widetilde{\mathcal{S}}_{V_{0}}} |u(\cdot, 0)|_{\infty}<\kappa.
\end{equation}
For any $k>\max_{t\in [0, \kappa]} f(t)$, define $f_{k}(t):=\min\{f(t), k\}$. Now, we consider the truncated problem
\begin{equation}\label{TTP}
\left\{
\begin{array}{ll}
\e^{2s}M(\e^{2s-N}[u]^{2}_{s})(-\Delta)^{s}u +V(x) u= f_{k}(u) &\mbox{ in } \R^{N}, \\
u\in H^{s}(\R^{N}), \quad u>0 &\mbox{ in } \R^{N}.
\end{array}
\right.
\end{equation} 
In what follows, we prove that, for small $\e>0$, there exists a positive solution $v_{\e}$ to \eqref{TTP} satisfying the properties of Theorem \ref{thm2}. Clearly, $v_{\e}$ is a solution to \eqref{P} if $|v_{\e}|_{\infty}<\kappa$.
We consider the limiting problem
\begin{equation}\label{LTP}
\left\{
\begin{array}{ll}
M([u]^{2}_{s})(-\Delta)^{s}u +V_{0} u= f_{k}(u) &\mbox{ in } \R^{N}, \\
u\in H^{s}(\R^{N}), \quad u>0 &\mbox{ in } \R^{N},
\end{array}
\right.
\end{equation}
and the corresponding extended problem
\begin{align}\label{ELP}
\left\{
\begin{array}{ll}
-\dive(y^{1-2s} \nabla w)=0 &\mbox{ in } \R^{N+1}_{+}, \\
\frac{1}{M(\|w\|^{2}_{\x})}  \frac{\partial w}{\partial \nu^{1-2s}}=-V_{0} w(\cdot, 0)+f_{k}(w(\cdot, 0)) &\mbox{ in } \R^{N}, 
\end{array}
\right.
\end{align}
whose associated energy functional is given by
$$
L_{V_{0}}^{k}(u)=\frac{1}{2}\wh{M}(\|u\|^{2}_{\x})+\frac{V_{0}}{2} |u(\cdot,0)|^{2}_{2}-\int_{\R^{N}} F_{k}(u(x,0))\, dx.
$$
\begin{lem}\label{lem3.1ZCDO}
Under the same assumptions of Theorem \ref{thm2}, \eqref{ELP} admits a positive ground state solution.
\end{lem}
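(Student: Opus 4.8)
The plan is to observe that the truncation turns the problem \emph{subcritical}, after which the variational scheme of Section~3 applies almost verbatim. Since $f_k=\min\{f,k\}$ is bounded we have $f_k(t)/t^p\to 0$ as $t\to\infty$ for every $p>0$, so $f_k$ satisfies $(f_2)$ for any $p\in(1,\2-1)$; moreover $f_k(t)=0$ for $t\le 0$, and because $f$ is continuous with $f(t)\ge 0$, $f(t)/t\to 0$ as $t\to 0^+$, while $k>\max_{t\in[0,\kappa]}f(t)$, we have $f_k\equiv f$ near the origin, so $f_k$ also satisfies $(f_1)$. Hence $L^k_{V_0}\in C^1(\X,\R)$, and the local part of the mountain pass geometry, $L^k_{V_0}(u)\ge\delta>0$ for $\|u\|_\X=\rho$, follows exactly as in Lemma~\ref{MPTL} using only $(M1)$, $(f_1)$ and $f_k\equiv f$ near $0$.

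For the descending part of the geometry I would \emph{not} look for an analogue of $(f_3)$ for $f_k$ (which is not directly available since $f_k$ is bounded), but instead dilate a ground state of the untruncated problem. By Proposition~\ref{prop2.1ZCDO} there is $u\in\mathcal{S}_{V_0}$, and by \eqref{3.1ZCDO} its trace has range contained in $[0,\kappa)$, where $f_k\equiv f$; since the dilations $\gamma(t)(x,y):=u(x/t,y/t)$, $t>0$, and $\gamma(0):=0$, do not change the range of $u$, we have $L^k_{V_0}(\gamma(t))=L_{V_0}(\gamma(t))$ for all $t\ge 0$, and the computation in the proof of Lemma~\ref{Final} (valid under $(M1)$, $(M3)$, $(M5)$ and the Pohozaev identity of Lemma~\ref{lem2.11FIJ}, which holds also under $(f_1)$, $(f'_2)$--$(f'_3)$) shows that $\max_{t\ge 0}L_{V_0}(\gamma(t))=L_{V_0}(u)$ and $L_{V_0}(\gamma(t))\to-\infty$ as $t\to\infty$. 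Thus $L^k_{V_0}$ has a mountain pass geometry; its mountain pass level $c^k_{V_0}$ satisfies $0<c^k_{V_0}\le L_{V_0}(u)$, and, as in \eqref{cmp=cmpr}, $c^k_{V_0}$ equals the corresponding radial level.

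The remaining steps repeat Section~3 with $(f,F)$ replaced by $(f_k,F_k)$. Applying the general minimax principle to $\widetilde{L}^k_{V_0}(\theta,u):=L^k_{V_0}\bigl(u(e^{-\theta}\cdot,e^{-\theta}\cdot)\bigr)$ on $\R\times\Xr$ as in Proposition~\ref{prop3.4HLP} yields a radial Palais--Smale sequence $(w_n)\subset\Xr$ at level $c^k_{V_0}$ with $P_k(w_n)\to 0$, $P_k$ being the Pohozaev functional of Lemma~\ref{lem2.11FIJ} with $F$ replaced by $F_k$. This sequence is bounded in $\X$ by the argument of Lemma~\ref{lem3.6HLP} (using $(M1)$, $(M2)$, $(f_1)$--$(f_2)$), and it does not vanish by the argument of Lemma~\ref{lemmaB}: if it did, $\int_{\R^N}f_k(w_n(\cdot,0))w_n(\cdot,0)\,dx=o_n(1)$ would follow from $f_k(t)t\le\delta t^2+C_\delta|t|^q$ with $q\in(2,\2)$, the compact radial embedding into $L^q$ and the arbitrariness of $\delta$, forcing $\|w_n\|_\X\to 0$ and contradicting $c^k_{V_0}>0$. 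After a translation and passage to the weak limit we obtain $\tilde w\in\Xr\setminus\{0\}$ solving problem \eqref{ELP} with $M(\|w\|^2_\x)$ replaced by the constant $\alpha_0:=\lim_n M(\|w_n\|^2_\x)$; the identity $\alpha_0=M(\|\tilde w\|^2_\x)$ --- hence that $\tilde w$ is a weak solution of \eqref{ELP} with $L^k_{V_0}(\tilde w)=c^k_{V_0}$ and $(L^k_{V_0})'(\tilde w)=0$ --- is proved exactly as in Lemma~\ref{Final}, via the Pohozaev identity, Lemma~2.4 of \cite{CW}, strong $L^q$ convergence and the arbitrariness of $\delta$ when passing to the limit in $\int f_k(w_n)w_n$, together with $(M4)$. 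Moreover, by the scaling argument of Lemma~\ref{Final} and Lemma~\ref{lem2.17FIJ}, $c^k_{V_0}$ coincides with the least energy level among the nontrivial solutions of \eqref{ELP}. Finally, testing with $\tilde w^-$ gives $\tilde w\ge 0$; a Moser iteration (as in \cite{Aade,CW}) gives $\tilde w(\cdot,0)\in L^\infty(\R^N)$, the regularity results of \cite{Silvestre} give $\tilde w(\cdot,0)\in C^{0,\beta}(\R^N)$, and the Harnack inequality \cite{CSire,JLX} gives $\tilde w>0$; hence $\tilde w$ is a positive ground state solution of \eqref{ELP}.

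The only genuinely new point --- and the place I expect the main (mild) obstacle --- is the construction of the descending path: since $f_k$ is bounded, the usual condition $(f_3)$ is not available for $f_k$, so one must exploit that a ground state of the untruncated critical problem has trace with range in $[0,\kappa)$ and that this range is dilation-invariant, so that $L^k_{V_0}$ and $L_{V_0}$ agree along the dilation curve. Everything else is routine once the subcriticality of $f_k$ is recorded.
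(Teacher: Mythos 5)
Your proof is correct, but it takes a different route from the paper's. The paper's own proof is shorter: it \emph{does} verify $(f_3)$ for $f_k$ directly --- contrary to your expectation that $(f_3)$ ``is not directly available since $f_k$ is bounded'' (boundedness of $f_k$ is irrelevant to $(f_3)$, which only asks for \emph{some} $T>0$ with $F_k(T)>\frac{V_0}{2}T^2$). The paper takes $u\in\widetilde{\mathcal{S}}_{V_0}$, uses the Pohozaev identity for the Schr\"odinger problem to show that $F(u(x_0,0))>\frac{V_0}{2}u^2(x_0,0)$ for some $x_0$, and since $u(x_0,0)<\kappa$ and $k>\max_{[0,\kappa]}f$ the truncation is inactive on $[0,u(x_0,0)]$, giving $F_k(T)>\frac{V_0}{2}T^2$ with $T=u(x_0,0)$; it then cites the existence of a radial ground state for the Schr\"odinger equation with nonlinearity $f_k$ and applies Lemma~\ref{lem2.16FIJ} to transfer this to the Kirchhoff problem. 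Your approach instead sidesteps $(f_3)$: you dilate a Kirchhoff ground state $u\in\mathcal{S}_{V_0}$ (from Proposition~\ref{prop2.1ZCDO}) and observe that, since the trace range of $u$ (and its dilates) lies in $[0,\kappa)$ where $f_k\equiv f$, the functionals $L^k_{V_0}$ and $L_{V_0}$ coincide on this curve, so the descending part of the mountain pass geometry is inherited; you then rerun the Section~3 machinery verbatim. Both approaches bottom out on the existence of ground states for the untruncated critical problem, but in different guises: the paper uses $\widetilde{\mathcal{S}}_{V_0}\neq\emptyset$ (the Schr\"odinger set) while you use $\mathcal{S}_{V_0}\neq\emptyset$ (the Kirchhoff set), the latter having been derived from the former in Section~4. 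The paper's route is shorter because $(f_3)$ turns out to be easy to check; your route is self-contained at the level of the Kirchhoff functional and avoids any appeal to the Schr\"odinger ground state inside this particular lemma. Your stated reason for avoiding $(f_3)$ is a misconception, but the alternative you built is valid.
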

\begin{proof}
Firstly we show that $f_{k}$ satisfies $(f_1)$-$(f_3)$. It is clear that $(f_1)$-$(f_2)$ are true. Now, for any $u\in \widetilde{\mathcal{S}}_{V_{0}}$, we know that $u$ fulfills the Pohozaev identity
$$
\frac{N-2s}{N} \|u\|^{2}_{\x}=N\int_{\R^{N}} F(u(x,0))-\frac{V_{0}}{2} u^{2}(x,0)\, dx,
$$
which yields 
$$
\int_{\R^{N}} F(u(x,0))-\frac{V_{0}}{2} u^{2}(x,0)\, dx\geq 0.
$$
If $F(u(x,0))-\frac{V_{0}}{2} u^{2}(x,0)\leq 0$ for all $x\in \R^{N}$, then $\frac{F(u(x,0))}{u^{2}(x,0)}=V_{0}>0$ for all $x\in \R^{N}$.
Using $(f'_1)$ and that $u(x,0)\ri 0$ as $|x|\ri \infty$, we get $\frac{F(u(x,0))}{u^{2}(x,0)}\ri 0$ as $|x|\ri \infty$, that is a contradiction.
Then, we can find $x_{0}\in \R^{N}$ such that $F(u(x_{0}, 0))>\frac{V_{0}}{2}u^{2}(x_{0}, 0)$. Since $|u(x_{0}, 0)|<\kappa$, it follows that $F_{k}(u(x, 0))=F(u(x, 0))$ for all $x\in \R^{N}$. Hence, letting $T=u(x_{0}, 0)>0$, we obtain that $F_{k}(T)>\frac{V_{0}}{2}T^{2}$, that is $(f_3)$ is satisfied. From \cite{Aade, BKS, ZDOS} we know that
$$
(-\Delta)^{s}u+V_{0}u=f_{k}(u) \mbox{ in } \R^{N}
$$
admits a radially symmetric ground state solution. At this point, we apply Lemma \ref{lem2.16FIJ} to deduce the assertion.
\end{proof}

Let $\mathcal{S}^{k}_{V_{0}}$ be the set of ground state solutions $u$ to \eqref{LTP} such that $u(0, 0)=\max_{x\in \R^{N}} u(x, 0)$.
Then, by Lemma \ref{lem3.1ZCDO} we deduce that $\mathcal{S}^{k}_{V_{0}}\neq \emptyset$.
\begin{lem}\label{lem3.2ZCDO}
For $k>\max_{t\in [0, \kappa]} f(t)$, we have
$$
\mathcal{S}^{k}_{V_{0}}=\mathcal{S}_{V_{0}}.
$$
\end{lem}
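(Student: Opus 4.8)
The plan is to prove the two inclusions $\mathcal{S}_{V_{0}}\subseteq\mathcal{S}^{k}_{V_{0}}$ and $\mathcal{S}^{k}_{V_{0}}\subseteq\mathcal{S}_{V_{0}}$, the common mechanism being that $f$ and $f_{k}$ agree on $[0,|u(\cdot,0)|_{\infty}]$ for every ground state $u$ involved, because of the choice $k>\max_{[0,\kappa]}f$ and of \eqref{3.1ZCDO}. Throughout I write $L^{k}_{V_{0}}$, $\mathcal{T}^{k}_{V_{0}}$, $b^{k}_{V_{0}}$, $c^{k}_{V_{0}}$ for the energy functional, the set of nonzero critical points with maximum at the origin, the least energy level and the mountain pass level associated with \eqref{ELP}. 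Since, as shown inside the proof of Lemma \ref{lem3.1ZCDO}, $f_{k}$ satisfies $(f_1)$–$(f_3)$, all the subcritical results of Section~3 apply verbatim to \eqref{ELP}: $\mathcal{S}^{k}_{V_{0}}\neq\emptyset$, every solution of \eqref{ELP} obeys the Pohozaev identity of Lemma \ref{lem2.11FIJ}, the scaling curve $t\mapsto v_{t}:=v(\cdot/t,\cdot/t)$ attached to a solution $v$ of \eqref{ELP} satisfies $\max_{t\ge 0}L^{k}_{V_{0}}(v_{t})=L^{k}_{V_{0}}(v_{1})=L^{k}_{V_{0}}(v)$ with $L^{k}_{V_{0}}(v_{t})\to-\infty$ as $t\to\infty$ (Lemma \ref{Final}), and $b^{k}_{V_{0}}=c^{k}_{V_{0}}$. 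I will also use that $b_{V_{0}}=c_{V_{0}}$ for the critical problem \eqref{LPC}: the inequality $c_{V_{0}}\le b_{V_{0}}$ is the scaling argument of Lemma \ref{Final} (Lemma \ref{lem2.11FIJ} being valid under $(f'_2)$–$(f'_3)$, cf.\ Lemma \ref{lem2.2ZCDO}), while $b_{V_{0}}\le c_{V_{0}}$ is the existence of a mountain pass ground state, which under $(f'_3)$ is exactly the content of the compactness arguments of \cite{Adie, ZDOS, ZCDO}.

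\textbf{Step 1: $b^{k}_{V_{0}}=b_{V_{0}}$ and $\mathcal{S}_{V_{0}}\subseteq\mathcal{S}^{k}_{V_{0}}$.} Let $u\in\mathcal{S}_{V_{0}}$. By \eqref{3.1ZCDO}, $0\le u(x,0)\le|u(\cdot,0)|_{\infty}<\kappa$ for every $x$, hence $f(u(x,0))\le\max_{[0,\kappa]}f<k$, so $f_{k}(u(x,0))=f(u(x,0))$ and $F_{k}(u(x,0))=F(u(x,0))$ for all $x\in\R^{N}$. Therefore $u$ solves \eqref{ELP}, $u\in\mathcal{T}^{k}_{V_{0}}$, and $L^{k}_{V_{0}}(u)=L_{V_{0}}(u)=b_{V_{0}}$, which gives $b^{k}_{V_{0}}\le b_{V_{0}}$. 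Conversely, pick $v\in\mathcal{S}^{k}_{V_{0}}$ and look at the curve $v_{t}$: since $F_{k}\le F$ pointwise, $L_{V_{0}}(v_{t})\le L^{k}_{V_{0}}(v_{t})\le b^{k}_{V_{0}}$ for all $t$, $v_{t}\to 0$ in $\X$ as $t\to 0$, and $L_{V_{0}}(v_{\tau})\le L^{k}_{V_{0}}(v_{\tau})\to-\infty$; after the usual reparametrisation the curve $t\mapsto v_{t}$ lies in $\Gamma_{V_{0}}$, so $c_{V_{0}}\le\max_{t}L_{V_{0}}(v_{t})\le b^{k}_{V_{0}}$. Combining with $b_{V_{0}}=c_{V_{0}}$ and $b^{k}_{V_{0}}\le b_{V_{0}}$ yields $b^{k}_{V_{0}}=b_{V_{0}}$. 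In particular every $u\in\mathcal{S}_{V_{0}}$ has $L^{k}_{V_{0}}(u)=b_{V_{0}}=b^{k}_{V_{0}}$ and $u\in\mathcal{T}^{k}_{V_{0}}$, hence $u\in\mathcal{S}^{k}_{V_{0}}$.

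\textbf{Step 2: $\mathcal{S}^{k}_{V_{0}}\subseteq\mathcal{S}_{V_{0}}$.} Let $v\in\mathcal{S}^{k}_{V_{0}}$ and keep $v_{t}$ as above, so that $L^{k}_{V_{0}}(v_{t})\le L^{k}_{V_{0}}(v)=b^{k}_{V_{0}}=b_{V_{0}}$ for all $t$. By Step~1 the reparametrised curve is in $\Gamma_{V_{0}}$, whence $b_{V_{0}}=c_{V_{0}}\le\max_{t}L_{V_{0}}(v_{t})$; together with $L_{V_{0}}(v_{t})\le L^{k}_{V_{0}}(v_{t})\le b_{V_{0}}$ this forces $\max_{t}L_{V_{0}}(v_{t})=b_{V_{0}}$, attained at some $t_{*}>0$. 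At $t_{*}$ we then have $b_{V_{0}}=L_{V_{0}}(v_{t_{*}})\le L^{k}_{V_{0}}(v_{t_{*}})\le b_{V_{0}}$, i.e.\ $L_{V_{0}}(v_{t_{*}})=L^{k}_{V_{0}}(v_{t_{*}})$, which means $\int_{\R^{N}}\bigl(F(v_{t_{*}}(x,0))-F_{k}(v_{t_{*}}(x,0))\bigr)\,dx=0$. Since $F\ge F_{k}$ pointwise and $v(\cdot,0)$ is continuous (regularity theory), positive and vanishing at infinity — so its range, which coincides with that of $v_{t_{*}}(\cdot,0)$, is contained in $[0,|v(\cdot,0)|_{\infty}]$ — we conclude $F=F_{k}$, hence $f=f_{k}$, on $[0,|v(\cdot,0)|_{\infty}]$. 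Thus $f(v(x,0))=f_{k}(v(x,0))$ for every $x$, so $v$ solves \eqref{LPC}, $v\in\mathcal{T}_{V_{0}}$, $L_{V_{0}}(v)=L^{k}_{V_{0}}(v)=b^{k}_{V_{0}}=b_{V_{0}}$ and $\max v(\cdot,0)=v(0,0)$, i.e.\ $v\in\mathcal{S}_{V_{0}}$. Together with Step~1 this proves $\mathcal{S}^{k}_{V_{0}}=\mathcal{S}_{V_{0}}$.

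\textbf{Main obstacle.} The argument hinges on the identity $b_{V_{0}}=c_{V_{0}}$ for the critical problem \eqref{LPC} — the nontrivial half, $b_{V_{0}}\le c_{V_{0}}$, being a Brezis–Nirenberg-type compactness statement where condition $(f'_3)$ enters, established as in \cite{Adie, ZDOS, ZCDO} — and on the ``sandwich'' step in Step~2, where the mere coincidence of the $L_{V_{0}}$- and $L^{k}_{V_{0}}$-levels along the scaling curve is upgraded, via the sign $F\ge F_{k}$ and the continuity of $v(\cdot,0)$, to the pointwise equality $f=f_{k}$ on the range of the ground state; this is what actually closes the loop. A minor point to check is that the scaling curve, once reparametrised on $[0,1]$, is admissible for $\Gamma_{V_{0}}$: it starts at $0$ and its endpoint has negative $L_{V_{0}}$-energy because $L_{V_{0}}\le L^{k}_{V_{0}}\to-\infty$.
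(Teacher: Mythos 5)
Your argument takes a genuinely different route from the paper's. The paper's proof of this lemma is a two-line reduction: since the scaling exponent $t_{u}$ in the map $T$ of Lemmas \ref{lem2.1ZCDO}--\ref{lem2.2ZCDO} depends only on $M$ and $\|u\|_{\x}$ (not on whether the nonlinearity is $f$ or $f_{k}$), $T$ coincides with its truncated analogue, so $\mathcal{S}^{k}_{V_{0}}=\mathcal{S}_{V_{0}}$ is equivalent to the local ($M\equiv1$) identity $\widetilde{\mathcal{S}}^{k}_{V_{0}}=\widetilde{\mathcal{S}}_{V_{0}}$, which is then imported verbatim from Corollary $4.3$ of \cite{JLZ}. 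Your direct energy-level argument, in particular the sandwich step that upgrades $\int\bigl(F-F_{k}\bigr)(v_{t_{*}}(\cdot,0))\,dx=0$ together with $F\ge F_{k}$ and continuity to the pointwise equality $f=f_{k}$ on the range of the ground state, is elegant and would constitute a genuinely self-contained alternative.

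The problem is that the pivotal input $b_{V_{0}}=c_{V_{0}}$ for the \emph{critical Kirchhoff} problem \eqref{LPC} — more precisely the hard half $b_{V_{0}}\le c_{V_{0}}$, namely that the mountain pass level is achieved by a critical point — is a Brezis--Nirenberg type compactness statement which is nowhere established in the paper. Section~4 never even introduces the mountain pass level $c_{V_{0}}$ for the critical problem; it works exclusively with $b_{V_{0}}$, $\tilde b_{V_{0}}$ and the bijection $T$, precisely to sidestep this issue. The references you invoke do not fill the gap: \cite{Adie, ZDOS} treat the local ($M\equiv1$) fractional scalar field problem, and \cite{ZCDO} treats the classical $s=1$ Kirchhoff case, not the fractional Kirchhoff one. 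Without $b_{V_{0}}\le c_{V_{0}}$ you only get the one-sided chain $c_{V_{0}}\le c^{k}_{V_{0}}=b^{k}_{V_{0}}\le b_{V_{0}}$, and the sandwich in Step~2 does not close: you cannot conclude $\max_{t}L_{V_{0}}(v_{t})=b^{k}_{V_{0}}$, so the argument that forces $F=F_{k}$ on the range of $v(\cdot,0)$ never gets off the ground. In short, the ``main obstacle'' you flag is a genuine gap rather than a citable fact; the paper's approach circumvents it entirely by routing the compactness through the local problem via $T$ rather than re-establishing it at the Kirchhoff level.
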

\begin{proof}
In the light of Lemma \ref{lem2.1ZCDO} and Lemma \ref{lem2.2ZCDO} it is enough to prove that $\widetilde{S}^{k}_{V_{0}}=\widetilde{S}_{V_{0}}$. This is proved in Corollary 4.3 in \cite{JLZ}.
\end{proof}

\noindent
Now we provide the proof of the main result of this section.
\begin{proof}[Proof of Theorem \ref{thm2}]
Since $f_{k}$ satisfies $(f_1)$-$(f_3)$, we can invoke Theorem \ref{thm1} to deduce that, fixed $k>\max_{t\in [0, \kappa]} f(t)$, there exists $\e_{0}>0$ such that \eqref{TTP} admits a positive solution $v_{\e}$ for $\e\in (0, \e_{0})$. Moreover, there exists $U\in \mathcal{S}^{k}_{V_{0}}$ and a maximum point $x_{\e}$ of $v_{\e}$ such that $\lim_{\e\ri 0} dist(x_{\e}, \M)=0$ and $v_{\e}(\e\cdot+x_{\e})\ri U(\cdot+z_{0})$ as $\e\ri 0$ in $\h$, for some $z_{0}\in \R^{N}$. Letting $w_{\e}=v_{\e}(\e\cdot+x_{\e})$ we see that $w_{\e}$ satisfies
$$
M(\|w_{\e}\|^{2}_{\x})(-\Delta)^{s}w_{\e}+V_{\e}\left(x+\frac{x_{\e}}{\e}\right)w_{\e}=f(w_{\e}) \mbox{ in } \R^{N}.
$$
Clearly,
$$
m_{0}\leq \inf_{\e<\e_{0}} M(\|w_{\e}\|^{2}_{\x})\leq \sup_{\e<\e_{0}} M(\|w_{\e}\|^{2}_{\x})<\infty.
$$
Then, we can argue as in Step 2 of the proof of Theorem 1.1 in \cite{JLZ} and use Lemma \ref{lem3.2ZCDO} to infer that there exists $\e^{*}>0$ such that $|v_{\e}|_{\infty}<\kappa$ for all $\e\in (0, \e^{*})$, which implies that $f_{k}(v_{\e})=f(v_{\e})$ in $\R^{N}$. In conclusion, $v_{\e}$ is a positive solution to \eqref{P}.
\end{proof}


\end{document}